\RequirePackage{fix-cm}
\documentclass{svjour3}  

\smartqed  

\usepackage{tikz}
\usepackage{xcolor}
\usepackage{amsmath}

\newcounter{proofcnt} 
\renewcommand{\theproofcnt}{\Alph{proofcnt}} 

\usepackage{hyperref}
\usepackage{etoolbox}
\usepackage{mathrsfs}
\newcommand{\eqf}[1]{%
  \ifstrequal{#1}{perpp}{\ensuremath{(\hyperlink{perpp}{\mathbf{P}_{(\rho,\xi)}})}}{}%
  \ifstrequal{#1}{perdp}{\ensuremath{(\hyperlink{perdp}{\mathbf{D}_{(\rho,\xi)}})}}{}%
  \ifstrequal{#1}{pp}{\ensuremath{(\hyperlink{pp}{\mathbf{P}})}}{}%
  \ifstrequal{#1}{dp}{\ensuremath{(\hyperlink{dp}{\mathbf{D}})}}{}%
}
\usepackage{subcaption}
\usepackage{graphicx}
\usepackage{enumitem}
\usepackage{multirow}
\usepackage{makecell}
\usepackage{array}
\usepackage{cite}
\usepackage{adjustbox} 
\usepackage{threeparttable}
\newcommand{\Step}[2]{%
\STATE \makebox[3.8em][l]{\textbf{Step #1:}}%
\begin{minipage}[t]{\dimexpr\linewidth-3.8em\relax}
\setlength{\parskip}{0pt}%
\setlength{\abovedisplayskip}{3pt}%
\setlength{\belowdisplayskip}{3pt}%
\setlength{\abovedisplayshortskip}{3pt}%
\setlength{\belowdisplayshortskip}{3pt}%
#2
\end{minipage}%
}

\begin{document}

\title{A Multiscale Primal--Dual Interior-Point Relaxation Method for Large-Scale Optimal Transport Problems
}

\titlerunning{MSIPRM FOR LARGE-SCALE OPTIMAL TRANSPORT PROBLEMS}        

\author{Shengyu~Sun         \and
        Rui-Jin  Zhang       \and
        Ruoyu Diao          \and
        Yu-Hong Dai
}

\authorrunning{S. Sun, R.J. Zhang, R. Diao, Y.H. Dai} 

 \institute{  Shengyu~Sun $\cdot$ Ruoyu~Diao $\cdot$ Yu-Hong~Dai \at
              State Key Laboratory of Mathematical Sciences, Academy of Mathematics and Systems Science, Chinese Academy of Sciences, and the University of Chinese Academy of Sciences, Beijing, China \\         
              \email{sunshengyu@lsec.cc.ac.cn, diaoruoyu18@mails.ucas.ac.cn, dyh@lsec.cc.ac.cn}
              \and
               Rui-Jin~Zhang \at
              School of Mathematical Sciences and LPMC, Nankai University, Tianjin, China\\
              \email{zhangrj@nankai.edu.cn} 
}

\date{Received: date / Accepted: date}

\maketitle

\begin{abstract}
Large-scale optimal transport (OT) problems involve a vast number of transport variables, leading to prohibitive memory and computational costs. To address these challenges, we propose a multiscale primal--dual interior-point relaxation method (MSIPRM). The multiscale outer framework constructs a hierarchy of standard OT problems at progressively finer levels. At each level, the OT problem is solved over a sequence of adaptively refined active sets initialized based on the solution support at the previous level. This yields a sequence of closely related sparse subproblems, thereby substantially reducing memory requirements. The primal--dual interior-point relaxation method (IPRM) serves as the inner solver for each sparse subproblem. Since IPRM does not require strictly interior iterates, it can readily use the solution of the previous subproblem as a warm start. To efficiently obtain the Newton direction, we solve a reduced Schur complement system derived from the normal equations. Furthermore, we develop an effective support-identification strategy based on the approximate solutions obtained by IPRM. We establish condition number estimates for the Schur complement matrices and analyze the global and local convergence properties of the algorithm. Numerical experiments on large-scale test problems demonstrate the computational efficiency and scalability of MSIPRM and show that it compares favorably with existing solvers. In particular, MSIPRM can handle instances whose full formulations contain trillions of transport variables.


\keywords{Optimal transport \and Multiscale method\and Support identification\and Interior-point relaxation method \and Sparse structure}
\subclass{15A03 \and 65K05 \and 90C06 \and 	90C08}
\end{abstract}

\section{Introduction}
Optimal transport (OT) provides a fundamental framework for comparing distributions and transporting mass between them. It has become an important modeling and computational tool in areas such as
economics \cite{chiappori2010hedonic}, computer vision \cite{ge2021ota}, machine learning~\cite{Chen2026NeuralSF},
and fluid dynamics~\cite{ChenVisualizing}. In the discrete setting, an OT problem can be formulated as a special case of linear programming (LP) whose variables represent the amounts of mass transported between pairs of
source and target points. However, this classical formulation becomes extremely challenging at large scale. For a two-dimensional grid with resolution $\texttt{row}\times \texttt{col}$, the corresponding transport problem contains $(\texttt{row}\cdot \texttt{col})^2$ variables. Table~\ref{basicdata} shows that the rapidly growing memory requirement for storing the transport plan becomes a major computational bottleneck as the grid is refined.

\begin{table}[H]
\centering
\caption{Numbers of constraints and variables, and memory usage of transport variables in OT problems at different grid resolutions}
\label{basicdata}
\resizebox{\textwidth}{!}{ 
    \footnotesize 
    \begin{tabular}{c c c c c c c  c }
         \hline 
         Resolution  & $32\times 32$ & $64\times 64$ & $128\times128$ &$256\times256$& $512\times512$ & $1024\times1024$ & $2048\times2048$\\ [1.5ex] 
        \hline 
Constraints   &  2.0E+03 & 8.2E+03 & 3.3E+04 & 1.3E+05 & 5.2E+05 & 2.1E+06 & 8.4E+06\\
\hline
Variables   &   1.0E+06 & 1.7E+07 & 2.7E+08 & 4.3E+09& 6.9E+10 & 1.1E+12  & 1.8E+13 \\
\hline
Memory  & 8MB &  128MB & 2GB & 32GB &512GB   & 8TB  & 128TB\\
        \hline 
    \end{tabular}   
}\end{table}

Several classes of methods have been developed to solve OT problems. Since discrete OT can be viewed as a network flow problem \cite{tarjan1997dynamic}, classical combinatorial methods such as the network simplex \cite{kovacs2015minimum} 
remain important baselines.
As OT is a special case of LP, several LP-based algorithms, including the smoothing Newton method SqSN \cite{hou2024sparse} and interior-point methods (IPMs) \cite{cipolla2024regularized,zanetti2023interior}, exploit the specific structure of OT to improve computational efficiency.
Additionally, entropy-regularized Sinkhorn-type methods \cite{cuturi2013sinkhorn,dvurechensky2018computational} are widely used because of their simplicity and parallelizability. However, for very large problems, full-memory formulations remain expensive, limiting the applicability of many algorithms.

Two main strategies have been used to reduce this burden. The first is to replace the original OT formulation with an equivalent reduced model for a special class of cost functions. A representative example is the reformulation introduced by Auricchio et al. \cite{auricchio2018computing}, which can be efficiently solved by the more recent HOT algorithm \cite{zhang2025hot}. The second is to exploit a multiscale framework \cite{merigot2011multiscale,schmitzer2016sparse,oberman2015efficient}. In this approach, one first solves coarse OT problems and then transfers support information to finer scales, thereby restricting the set of variables for the fine-scale problem to a small subset. 
Building on this idea, several successful methods have been developed, including ShortCut \cite{schmitzer2016sparse}, stabilized sparse Sinkhorn (Sp-Sinkhorn) schemes \cite{schmitzer2019stabilized}, and the multiscale semismooth Newton method (MSSN) \cite{liu2022multiscale}.

The present work adopts the second strategy. A general multiscale method consists of an outer multiscale framework and an inner solver for the resulting sparse subproblems. In the outer framework, a hierarchy of standard OT problems is constructed by iteratively merging geometrically adjacent points.
At each level, the full OT problem is reduced by retaining only a small subset of variables. The active set is initialized using the support of the solution at the previous level. The method then updates the active set and solves the corresponding sparse subproblems until the solution at the current level is obtained. Based on this scheme, MSSN has achieved favorable numerical results. It employs a semismooth Newton method to solve the subproblems. The method further incorporates a fast algorithm for identifying redundant rows, together with a robust active-set update strategy. Nevertheless, numerical experiments reported in \cite{liu2022multiscale} suggest that MSSN may suffer from numerical instability on some difficult instances.

Beyond the outer multiscale framework, the choice of the inner solver is also crucial. At each level, one needs to solve a sequence of closely related sparse subproblems because the active set is updated repeatedly. An inner solver should therefore be able to exploit warm starts, which can reduce the number of inner iterations and improve the overall efficiency. However, despite extensive research on warm-start strategies for IPMs \cite{benson2007exact,gondzio2015new,yildirim2002warm}, their implementation remains challenging because IPM iterates must remain strictly interior.
This limitation motivates the use of the primal--dual interior-point relaxation method (IPRM), which is based on the barrier augmented Lagrangian (BAL) function~\cite{liu2022primal,zhang2023iprqp,zhang2024iprsocp,zhang2024iprsdp}. Unlike standard IPMs, IPRM is simple and suitable for warm starts because it does not require the iterates to lie in the interior.

To this end, we propose a multiscale primal--dual interior-point relaxation method (MSIPRM) that employs IPRM as the inner solver for each sparse subproblem. The main contributions of this work can be summarized as follows:

\begin{itemize}[leftmargin=3em]
\item[$\bullet$] Algorithmically, we integrate IPRM with a multiscale framework. For each sparse subproblem, we develop an efficient and robust preprocessing and decomposition procedure. Then the solution to the previous subproblem is used to warm-start IPRM.
To compute the Newton direction, we solve a reduced Schur complement system.
At each level, an effective initial active set is predicted using the OT solution computed by IPRM at the preceding level. Based on this initial  set, we simplify the active-set update strategy and introduce a few new indices during each update. In practice, the prescribed accuracy is typically reached after only a small number of updates.

\item[$\bullet$] Numerically, we demonstrate the advantages of MSIPRM in terms of scalability and computational efficiency.
By avoiding substantial memory overhead, MSIPRM can solve problems whose full formulations involve trillions of variables.
On benchmark datasets, MSIPRM is several times faster than HOT \cite{zhang2025hot} and LEMON's network simplex solver in certain cases, while also outperforming competing methods based on similar multiscale frameworks. Extensive experiments further confirm the efficiency of IPRM and the effectiveness of our active-set prediction strategy.

\item[$\bullet$] Theoretically, we investigate structural properties of OT problems from a graph-theoretic perspective. Building on this analysis, we prove that, for almost all source--target vectors, the primal constraint nondegeneracy condition holds at every optimal solution. To provide an effective initial active set, we derive a theoretical guarantee for support identification. In addition, upper bounds on the condition numbers of the Schur complement matrices (SCMs) are provided. Finally, a global error bound is derived, and the local quadratic convergence of the algorithm is established.
\end{itemize} 

The remainder of this paper is organized as follows. Section~\ref{preliminary} introduces balanced OT problems and the associated sparse subproblems. Section~\ref{Active-set}
presents a preprocessing procedure for these subproblems and explains how solutions to the standard OT problems can be recovered. This section also discusses the constraint degeneracy conditions. Section~\ref{sectioniprm} first introduces the multiscale framework,
then presents IPRM and the support identification strategy, and finally combines them to develop MSIPRM. It further analyzes the condition numbers of the SCMs. Section~\ref{Convergence analyses} establishes the global and local convergence properties of the proposed algorithm. Section~\ref{numerical results} reports numerical results that demonstrate the efficiency of MSIPRM. Section~\ref{conclusion} concludes the paper.

\textbf{Notation.} Throughout this paper, we use the following notation. All vectors are column vectors, and $w= (p, q)$ means $w = [p^\top,q^\top]^\top$. 
We denote the $n\times n$ identity matrix by $I_n$ and the $n$-dimensional all-ones vector by $\mathbf{1}_n$. For a matrix $X=(X_{ij})\in\mathbb{R}^{m\times n}$, we define its column-wise vectorization by $\vecc(X)=(X_{11},\cdots,X_{m1},X_{12},\cdots,X_{m2},\cdots,X_{1n},\cdots,X_{mn})^\top$. Conversely, for a vector $x\in\mathbb{R}^{mn}$, we define $\mat(x)=X$ if $x=\vecc(X)$. For vectors $u,\,v\in\mathbb{R}^n$,
we write $u\geq v$ if $u-v\in\mathbb{R}^n_+$, and $u> v$ if $u-v\in\mathbb{R}^n_{++}$. The Hadamard product of $u$ and $v$ is $u\circ v=(u_1v_1,\cdots,u_nv_n)^\top$. The negative part of $v$ is defined componentwise by $(v_-)_j=\min\{v_j,0\}$ for $1\leq j\leq n$. The notation $\diag(v)$ denotes the diagonal matrix with diagonal entries $v$. The superscripts $[i]$ and $(k)$ denote the outer and inner iteration indices, respectively, while a subscript $\ell$ denotes the level index.
The Euclidean and infinity norms of $v$ are denoted by $\|v\|$ and $\|v\|_\infty$, respectively. For a matrix $A\in\mathbb{R}^{m\times n}$, $\|A\|$ and $\|A\|_F$ denote its spectral and Frobenius norms, respectively. The distance from $u$ to a nonempty closed set $S\subseteq\mathbb{R}^n$ is defined by $\dist(u,S)=\min\{\|u-s\|\mid s\in S\}$.

\section{Preliminary}\label{preliminary}
In this paper, we consider balanced discrete OT problems and formulate them as finite-dimensional LP problems. Let $\U=\{u_1,\cdots,u_m\}$ and $\V=\{v_1,\cdots,v_n\}$ be the source and target sets, respectively. We assume that $2\leq m\leq n$. 
Let $\bm\mu$ and $\bm\nu$ be finite marginal measures on $\U$ and $\V$, respectively, with equal total mass, i.e.,
\[\bm\mu(u_i)=\mu_i\geq0,\,\bm\nu(v_j)=\nu_j\geq0,\,\mathbf{1}_m^\top\bm\mu=\mathbf{1}_n^\top\bm\nu,\quad 1\leq i\leq m,\,1\leq j\leq n.\]
Given a cost function $h:\U\times \V\to\mathbb{R}_+$, we define the associated cost matrix by $C=(C_{ij})\in\mathbb{R}^{m\times n}_+$, where $C_{ij}=h(u_i,v_j)$. The Kantorovich formulation is
\begin{equation}\label{OTpro}
\min\,\,\left<C,X\right>\quad
\textup{s.t.}\,X\mathbf{1}_n=\bm\mu,\,X^\top \mathbf{1}_m=\bm\nu,\,X_{ij}\ge 0,\,1\leq i\leq m,\,1\leq j\leq n.  
\end{equation}
Using properties of the Kronecker product, problem~\eqref{OTpro} is equivalent to
\begin{equation}\label{full}
 \min\,\, c^\top x\quad
 \textup{s.t.}\,\, \bar Ax=\bar b,\,x\in\mathbb{R}^{mn}_+,
\end{equation}
where $x=\vecc(X)$, $c=\vecc(C)$, $\bar b=(\bm\mu,\bm\nu)$, and
\begin{equation}\label{matrixA}
\bar A:=\begin{pmatrix}
			\mathbf{1}_n^\top\otimes I_m\\
				I_n\otimes \mathbf{1}^\top _m\end{pmatrix}\in\mathbb{R}^{(m+n)\times mn}.\end{equation}
If some $\mu_i$ or $\nu_j$ is zero, the corresponding row or column of $X$ can be directly removed. Accordingly, throughout the paper, we assume that $\bar b>0$ and define
\[\mathcal H(m,n)
:=\{(\bm\mu,\bm\nu)\,|\,
\bm\mu\in\mathbb{R}^m,\,\bm\nu\in\mathbb{R}^n,\,\bm 1_m^\top\bm\mu=\bm1_n^\top\bm\nu\},\,
\D(m,n):=\mathcal{H}(m,n)\cap\mathbb{R}^{m+n}_{++}.\]
The set $\mathcal{H}(m,n)$ is an $(m+n-1)$-dimensional subspace of $\mathbb{R}^{m+n}$.
For any $\bar b\in\D(m,n)$, define $\bar X=\frac{\bm\mu\bm\nu^\top}{\mathbf{1}_m^\top\bm\mu}$. Then $\bar x=\vecc(\bar X)$ is feasible for \eqref{full}. Hence, the feasible set $\F_{\bar b}:=\{x\mid \bar Ax=\bar b,\, x\in\mathbb{R}^{mn}_+\}$ is nonempty for every $\bar b\in\D(m,n)$.

Note that any optimal basic solution of \eqref{full} has at most $m+n-1$ nonzero components, which is typically smaller than $mn$. Motivated by this sparsity, we restrict problem~\eqref{full} to the variables indexed by a smaller active set $\N$:
   \begin{equation}\label{defineN}
      \N=\{(u_{i_t},v_{j_t})\}_{t=1}^{\cN}\subseteq\U\times\V.
   \end{equation}
  The elements of $\N$ are ordered according to their positions in the column-wise vectorization; that is,
$i_t+(j_t-1)m<i_{t+1}+(j_{t+1}-1)m$ for $1\leq t\leq\cN-1$.

Specifically, we consider the following sparse LP problem:
     \begin{equation}\label{sparse}
 \min\,\, c_\N^\top x\quad\textup{s.t.}\,\,\bar A_\N x=\bar b,\,x\in\mathbb{R}^{|\N|}_+,
\end{equation}
where $c_\N\in\mathbb{R}^{|\N|}$ is the cost vector restricted to $\N$, i.e., $(c_\N)_t=C_{i_tj_t}$, and $\bar A_\N \in\mathbb{R}^{(m+n)\times|\N|}$ is the submatrix of $\bar A$. The $t$-th column of $\bar A_\N $ is the $(i_t+(j_t-1)m)$-th column of $\bar A$.
The dual problem of \eqref{sparse} can be written as
\begin{equation*}
 \max\,\, \bm\mu^\top\alpha+\bm\nu^\top\beta\quad\textup{s.t.}\,\,\alpha_{i_t}+\beta_{j_t}+s_t= C_{i_tj_t},\,s_t\ge 0,\,(u_{i_t},v_{j_t})\in\N,\,1\le t\le \cN.
\end{equation*}

Each element $(u_{i_t},v_{j_t})\in\N$ represents an edge joining $u_{i_t}$ and $v_{j_t}$. Thus, $\bar A$ and $\bar A_\N$ are the incidence matrices of the bipartite graph $\G=(\U\cup\V,\U\times\V)$ and its spanning subgraph $\G_\N=(\U\cup\V,\N)$, respectively; see \cite[Section 8.2]{royle2001algebraic}. Since $\bar b>0$, feasibility of \eqref{sparse} requires $\G_\N$ to have no isolated vertices.

To facilitate the subsequent analysis, we introduce the following
notation and definitions. Let $\N$ be given by \eqref{defineN}. For a vector $x\in\mathbb{R}^{\cN}$, its support is defined by $\spt(x):=\{t\mid x_t>0,\,1\leq t\leq \cN\}$. We define the associated zero-filled matrix $\mat_\N(x)\in\mathbb R^{m\times n}$ by
\[
(\mat_\N (x))_{ij}
:=\begin{cases}
x_t, & \text{if } (u_i,v_j)=(u_{i_t},v_{j_t})\in\N,\\
0, & \text{otherwise}.
\end{cases}
\]
The zero extension of $x$ to $\mathbb{R}^{mn}$ is defined by
$\ext_\N (x):=\vecc\bigl(\mat_\N (x)\bigr)$. For a matrix $X=(X_{ij})\in\mathbb{R}^{m\times n}$, its support is defined by $\spt(X):=\{(i,j)\mid X_{ij}>0\}$. Correspondingly, its support on the complete bipartite graph $\G$ is defined by $\spt_\G(X):=\{(u_i,v_j)\,|\,(i,j)\in\spt(X)\}$. The restriction of $X$ to the entries indexed by $\N$ is denoted by $\vecc_\N(X):=(X_{i_1j_1},\cdots,X_{i_{\cN}j_{\cN}})^\top\in\mathbb{R}^{|\N|}$. In particular, $\vecc_\N(\mat_\N (x))=x$ for every $x\in\mathbb{R}^{\cN}$. 

To describe the vertex sets of the connected components, we next recall the notion of a partition.
A collection $\{\I_1,\cdots,\I_k\}$ is a partition of $\U$ if its members are nonempty and pairwise disjoint, and their union is $\U$. Such a partition is called nontrivial if $k\geq 2$. 

\section{Exploiting Sparsity and Graph Structure in OT Problems}\label{Active-set}
This section develops a framework for solving the full problem
\eqref{full}. First, we introduce a preprocessing and decomposition algorithm tailored to each subproblem~\eqref{sparse}. We then describe the active-set refinement strategy. Finally, we establish constraint nondegeneracy properties needed for the subsequent analysis.
    
\subsection{Preprocessing and Decomposition of a Sparse Subproblem}\label{reductionAN}
Given an active set $\N$, we propose a fast algorithm to test a necessary condition for the feasibility of the associated sparse subproblem~\eqref{sparse}, augmenting $\N$ with suitable edges whenever necessary. The resulting subproblem is then decomposed into smaller independent subproblems. This procedure is based on the following proposition, whose proof is provided in Appendix~\ref{proof:theorem-1}.

\begin{proposition}\label{rankA}
Let $\G_\N=(\U\cup\V,\N)$ be a bipartite graph containing no isolated vertices. Suppose that it has $c_0$ connected components, denoted by
\begin{equation}\label{connect}
\G_{\N_k}=(\I_k\cup\J_k,\N_k),
\quad \N_k=\N\cap(\I_k\times\J_k),
\quad 1\leq k\leq c_0,\end{equation}
where $\sum_{k=1}^{c_0}|\N_k|=|\N|$ and $\{\I_k\}_{k=1}^{c_0}$ and $\{\J_k\}_{k=1}^{c_0}$ are partitions of $\U$ and $\V$, respectively.
Then $\rank(\bar A_\N ) =m+n-c_0$ and problem~\eqref{sparse} can be decomposed into the following $c_0$ independent subproblems:
   \begin{equation}\label{subproblem}
\min\,\, c_{\N_k}^\top x_k\quad \textup{s.t.}\,\,\bar A_kx_k=\bar b_{\I_k,\J_k},\,x_k\in\mathbb{R}^{|\N_k|}_+,\quad 1\leq k\leq c_0,
\end{equation}
where $\bar A_k\in\mathbb{R}^{(|\I_k|+|\J_k|)\times|\N_k|}$ is the incidence matrix induced by $\G_{\N_k}$ and has rank $|\I_k|+|\J_k|-1$. Moreover, $c_{\N_k}\in\mathbb{R}^{|\N_k|}$ and $\bar b_{\I_k,\J_k}=(\bm\mu_{\I_k},\bm\nu_{\J_k})\in\mathbb{R}^{|\I_k|+|\J_k|}$ are the corresponding cost and marginal vectors, respectively. 
\end{proposition}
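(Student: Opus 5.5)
Since the vertex sets of the components partition $\U\cup\V$ and every edge of $\N$ joins two vertices of the same component, the natural first step is to reorder the rows and columns of $\bar A_\N$ componentwise. I will permute the rows so that those indexed by $\I_1\cup\J_1$ come first, then those of $\I_2\cup\J_2$, and so on, and permute the columns so that the edges of $\N_1$ come first, then $\N_2$, etc. Column $t$ of $\bar A_\N$ has exactly two nonzero entries (both equal to $1$), in the rows of $u_{i_t}$ and $v_{j_t}$. For an edge in $\N_k$ both endpoints lie in $\I_k\cup\J_k$, so after the permutation $P\bar A_\N Q=\diag(\bar A_1,\dots,\bar A_{c_0})$ is block diagonal, with $\bar A_k$ the incidence matrix of $\G_{\N_k}$. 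The identity $\sum_k|\N_k|=|\N|$ only records that the $\N_k$ partition $\N$. Hence $\rank(\bar A_\N)=\sum_k\rank(\bar A_k)$, and the equality constraints split into blocks $\bar A_kx_k=\bar b_{\I_k,\J_k}$. The objective is separable as $c_\N^\top x=\sum_k c_{\N_k}^\top x_k$, and the nonnegativity constraint is a product over the blocks. Therefore \eqref{sparse} is exactly the direct sum of the subproblems \eqref{subproblem}: $x$ is feasible or optimal if and only if each $x_k$ is.

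The remaining task is to show $\rank(\bar A_k)=|\I_k|+|\J_k|-1$ for a connected bipartite graph with at least one edge. Summing the ranks then gives $\sum_k(|\I_k|+|\J_k|)-c_0=m+n-c_0$.

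For the upper bound I will exhibit a nonzero vector in the left null space of $\bar A_k$: take $w=(\mathbf 1_{|\I_k|},-\mathbf 1_{|\J_k|})$. Each column has one $1$ in a $\U$-row and one $1$ in a $\V$-row, so $w^\top\bar A_k=0$. Thus the rank is at most $|\I_k|+|\J_k|-1$.

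For the lower bound I will show that the left null space is exactly $\operatorname{span}\{w\}$. Suppose $y^\top\bar A_k=0$, and write $y=(\alpha,\beta)$ with $\alpha$ indexed by $\I_k$ and $\beta$ by $\J_k$. Each edge $(u_i,v_j)\in\N_k$ gives $\alpha_i+\beta_j=0$. Fix a vertex and follow paths in the connected graph $\G_{\N_k}$; by induction on path length, $\alpha_i=\gamma$ for every $u_i\in\I_k$ and $\beta_j=-\gamma$ for every $v_j\in\J_k$, with one common scalar $\gamma$. Hence the left null space is one-dimensional, and the rank equals $|\I_k|+|\J_k|-1$. An alternative route is to choose a spanning tree and show that its $|\I_k|+|\J_k|-1$ columns are independent by repeatedly peeling off leaves, but the null-space argument is shorter.

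The main obstacle is bookkeeping rather than substance. I need to check that the row and column permutations are compatible with the column-wise ordering convention for $\N$ and the definition of $\bar b_{\I_k,\J_k}$, so that the decomposition is an exact reindexing. I also need the hypothesis that there are no isolated vertices: it guarantees that every component contains at least one edge, so that each $\N_k$ is nonempty and $\bar A_k$ is a genuine incidence matrix. An isolated vertex would form a component with an empty column block, and the rank formula would not apply to it.
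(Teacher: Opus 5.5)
Your proposal is correct and follows the paper's proof. The paper likewise permutes rows and columns to put $\bar A_{\mathcal N}$ into the block-diagonal form $\operatorname{diag}(\bar A_1,\dots,\bar A_{c_0})$ and reads off the decomposition. The only difference is that the paper takes the rank from \cite[Theorem 8.2.1]{royle2001algebraic} (the rank of an incidence matrix equals the number of vertices minus the number of bipartite components), whereas you prove $\operatorname{rank}(\bar A_k)=|\mathcal I_k|+|\mathcal J_k|-1$ directly from the one-dimensional left null space spanned by $(\mathbf 1,-\mathbf 1)$, which makes your argument self-contained.

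One small correction to your closing remark: an isolated vertex would not break the rank formula, since its block has rank $0=1+0-1$ and your null-space argument covers it trivially. The hypothesis is needed instead so that every $\mathcal I_k$ and $\mathcal J_k$ is nonempty, i.e.\ so that the two families really are partitions, and because with $\bar b>0$ a zero row would make the problem infeasible.
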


Proposition~\ref{rankA} shows that each connected component
$\G_{\N_k}$ induces a block $\bar A_k$, and each block
contains exactly one redundant row. 
We employ the breadth-first search (BFS) algorithm \cite[Section 22.2]{cormen2009introduction} to identify all connected components of the spanning subgraph $\G_\N=(\U\cup\V,\N)$. 
Roughly speaking, BFS starts from an initial vertex and explores all vertices reachable through adjacent vertices. Once no further unvisited adjacent vertices remain, the explored vertices form one connected component of $\G_\N$. Applying the same procedure to the remaining unvisited vertices yields all connected components of $\G_\N$.
Unlike the reduction method in~\cite{liu2022multiscale}, our method
avoids explicitly constructing the incidence matrix
$\bar A_\N \in\mathbb R^{(m+n)\times|\N|}$ while identifying the
redundant rows and yielding a decomposition of problem~\eqref{sparse}.

Suppose that all $c_0$ connected components of
$\G_\N $, as specified in \eqref{connect}, have been identified.
For each $k=1,\cdots,c_0$, let $
d_k:=\mathbf 1_{|\I_k|}^{\top}\bm\mu_{\I_k}-\mathbf 1_{|\J_k|}^{\top}\bm\nu_{\J_k}$ denote the mass imbalance of the $k$-th connected component. We then define the collection 
\begin{equation}\label{collection}
\mathscr{C}
=\left\{
(\I_k,\J_k,\N_k,d_k)
\right\}_{k=1}^{c_0}.\end{equation}
A necessary condition for the sparse
subproblem~\eqref{sparse} to be feasible is that the
mass-conservation condition $d_k=0$ hold for every
$1\leq k\leq c_0$.
Before removing redundant rows from $\bar A_\N$, we choose a tolerance $\varepsilon_0\ge0$ and check whether the mass imbalances
satisfy $\max_{1\leq k\leq c_0}|d_k|\leq \varepsilon_0$. If this condition fails, then at least one $d_k$ is nonzero. The index sets
\begin{equation}\label{l+l-}
\Gamma_+:=\{k\,{|}\, d_k>0,\,1\leq k\leq c_0\}\quad\textup{and}\quad
\Gamma_-:=\{k\,{|}\,d_k<0,\, 1\leq k\leq c_0\}\end{equation}
are both nonempty, since $\sum_{k\in \Gamma_+}d_k+\sum_{k\in\Gamma_-}d_k=\bm 1_m^\top\bm\mu-\bm1_n^\top\bm\nu= 0$.
We select indices $k_+\in\Gamma_+$ and $k_-\in\Gamma_-$ and add suitable edges from $\I_{k_+}\times \J_{k_-}$ and $\I_{k_-}\times \J_{k_+}$ to $\N$. 
These edges merge connected components that violate the mass-conservation condition. The detailed procedure is presented in Algorithm \ref{reduction}.

\begin{algorithm}[H]
\caption{Preprocessing and decomposition of a sparse subproblem}
\label{reduction}
\begin{algorithmic}[0]
\setlength{\itemsep}{0pt}
\setlength{\parsep}{0pt}
\setlength{\parskip}{0pt}

\Step{0}{Given $\G_\N=(\U\cup\V,\N)$, $\bar b=(\bm\mu,\bm\nu)$, and $\varepsilon_0\in[0,\min_{1\leq j\leq m+n}\bar b_j)$.}

\Step{1}{Use BFS to find the $c_0$ connected components of $\G_\N $ and form $\mathscr C$ as in~\eqref{collection}.}

\Step{2}{If $\max_{1\leq k\leq c_0}|d_k|\leq\varepsilon_0$, proceed directly to Step~4. Otherwise, define $
\Gamma_+$ and $\Gamma_-$ as in~\eqref{l+l-} and proceed to Step~3.}

\Step{3}{Choose $k_+\in\argmax_{k\in\Gamma_+}d_k$ and $k_-\in\argmin_{k\in\Gamma_-}d_k$. Select a nonempty set 
$\N_{\textup{add}}\subseteq (\I_{k_+}\times\J_{k_-})\cup(\I_{k_-}\times\J_{k_+})$. Update $\N\gets\N\cup\N_{\textup{add}}$ and 
$\G_\N\gets(\U\cup\V,\N)$. 
Update the collection of connected components as follows:
\[
\begin{aligned}
\mathscr C\leftarrow&
\mathscr C
\setminus
\big\{
(\I_{k_+},\J_{k_+},\N_{k_+},d_{k_+}),
(\I_{k_-},\J_{k_-},\N_{k_-},d_{k_-})
\big\}\\
&\cup
\big\{
(\I_{k_+}\cup\I_{k_-},
 \J_{k_+}\cup\J_{k_-},
 \N_{k_+}\cup\N_{k_-}\cup\N_{\textup{add}},
 d_{k_+}+d_{k_-})
\big\}.
\end{aligned}
\]

Set $c_0=c_0-1$. Relabel the elements of $\mathscr C$ by $1,\cdots,c_0$ and return to Step 2.}

\Step{4}{Decompose problem \eqref{sparse} into $c_0$ subproblems. For each connected component, delete a smallest component of
$\bm\nu_{\J_k}$ if $d_k\leq 0$, or of $\bm\mu_{\I_k}$ otherwise,
together with the corresponding row of $\bar A_k$.}

\end{algorithmic}
\end{algorithm}
Let $c_0'$ denote the initial number of connected components before Algorithm~\ref{reduction}. Since each
execution of Step~3 merges two connected components, Algorithm~\ref{reduction}
terminates after at most $c_0'-1$ iterations. It returns a repaired active
set $\N$ with $|d_k|\leq\varepsilon_0$ for each connected component and decomposes
the problem into $c_0$ reduced LPs. Although exact feasibility requires
$\varepsilon_0=0$, we allow a small tolerance $\varepsilon_0>0$ to account for
numerical errors. In this case, any solution $x_k^*$ to the $k$-th reduced LP obtained in Step~4 satisfies $\|\bar A_kx_k^*-\bar b_{\I_k,\J_k}\|_\infty=|d_k|\leq\varepsilon_0$.

\subsection{Active-Set Updates for Solving the Full Problem}\label{updateN}
Since the exact support of an optimal solution to the full problem~\eqref{full} is unknown a priori, the active set $\N$ is updated iteratively. Given an active set $\N^{[i]}$, let $x_{\N^{[i]}}$ be an optimal
solution to the sparse subproblem~\eqref{sparse} with $\N=\N^{[i]}$, and let $\lambda^{[i]}=(\alpha^{[i]},\beta^{[i]})$ be an optimal dual multiplier. Define $ x^{[i]}=\ext_{\N^{[i]}}( x_{\N^{[i]}})\in\mathbb{R}^{mn}_+$.
If the full problem~\eqref{full} admits an optimal solution $x^*$ such that $\spt_\G(\mat(x^*))\subseteq\N^{[i]}$,
then $ x^{[i]}$ is also optimal for problem~\eqref{full}. On the other hand, if $ x^{[i]}$ is not optimal
for problem~\eqref{full}, then $\lambda^{[i]}$ is infeasible for its dual problem.
Consequently, the union of the following sets of violated dual constraints
\begin{equation*}
\begin{aligned}
    \T^{[i]}_{\textup{rel}}&=\left\{(u_k,v_l)\mid(\alpha^{[i]}_k+\beta^{[i]}_l)/C_{kl}>1,\,C_{kl}>0
\right\},\\
\T^{[i]}_{\textup{abs}}&=\left\{(u_k,v_l)\mid\alpha^{[i]}_k+\beta^{[i]}_l-C_{kl}>0,\,C_{kl}=0\right\},
\end{aligned}
\end{equation*}
is nonempty, i.e., $|\T^{[i]}_{\text{rel}}|+|\T^{[i]}_{\text{abs}}|\ge 1$. Since this union may be large, we select only a limited number of its edges to construct the next active set $\N^{[i+1]}$. Arrange the elements of  $\T_{\mathrm{rel}}^{[i]}$ as
$\bigl((u_{\hat k_t},v_{\hat l_t})\bigr)_{t=1}^{|\T^{[i]}_{\text{rel}}|}$.
The elements are ordered by decreasing $(\alpha^{[i]}_{\hat k_t}+\beta^{[i]}_{\hat l_t})/C_{\hat k_t\hat l_t}$, with ties broken
by increasing $C_{\hat k_t\hat l_t}$ and then by increasing
$\hat k_t+(\hat l_t-1)m$.
Similarly, arrange the elements of $\T_{\textup{abs}}^{[i]}$ as $\bigl((u_{\tilde k_t},v_{\tilde l_t})\bigr)_{t=1}^{|\T^{[i]}_{\text{abs}}|}$. 
The elements are ordered by decreasing $\alpha_{\tilde k_t}^{[i]}+\beta_{\tilde l_t}^{[i]}$, with ties broken by increasing $\tilde k_t+(\tilde l_t-1)m$.
For integers $0\leq K_1\leq|\T^{[i]}_{\text{rel}}|$ and $0\leq K_2\leq|\T^{[i]}_{\text{abs}}|$, we select 
\begin{equation}\label{choose}
    \widehat{\T}^{[i]}(K_1)=\{(u_{\hat k_t},v_{\hat l_t})\}_{t=1}^{K_1}\quad\textup{and}\quad\widetilde{\T}^{[i]}(K_2)=\{(u_{\tilde k_t},v_{\tilde l_t})\}_{t=1}^{K_2}.
\end{equation}
Then we update the next active set $\N^{[i+1]}$ by $\N^{[i+1]}=\N^{[i]}\cup   \widehat{\T}^{[i]}(K_1)\cup\widetilde{\T}^{[i]}(K_2)$. 

Our update scheme is simpler than those proposed in \cite{liu2022multiscale,schmitzer2016sparse}. We omit the step of adding the geometric neighbors of $\N^{[i]}$ to $\N^{[i+1]}$. This simplification is possible because the initial set $\N^{[0]}$, constructed using the multiscale approach described later, already incorporates the relevant geometric information.
Building on
this initialization and update scheme, we propose Algorithm \ref{update} to obtain an optimal solution to the full problem~\eqref{full} by solving a sequence of sparse subproblems~\eqref{sparse}.
    
\begin{algorithm}[H]
\caption{Solving a sequence of sparse subproblems}
\label{update}
\begin{algorithmic}[0]
\setlength{\itemsep}{0pt}
\setlength{\parsep}{0pt}
\setlength{\parskip}{0pt}

\Step{0}{Input $\bar b,\,c$ and an initial active set $\N^{[0]}$. Choose $\theta_1,\theta_2>0$ and set $i=0$.}

\Step{1}{Apply Algorithm \ref{reduction} to preprocess the sparse
problem~\eqref{sparse} with $\N=\N^{[i]}$, thereby obtaining $c_0$
subproblems of the form~\eqref{subproblem}.}

\Step{2}{Solve these $c_0$ subproblems and assemble their
solutions to obtain a primal--dual solution
$(x_{\N^{[i]}},\lambda^{[i]},s_{\N^{[i]}})$ of the sparse problem~\eqref{sparse} with $\N=\N^{[i]}$.}

\Step{3}{Set $x^{[i]}=\ext_{\N^{[i]}}(x_{\N^{[i]}})$ and $s^{[i]}=c-\bar A^\top\lambda^{[i]}$. If the termination criterion is satisfied, return $(x^{[i]},\lambda^{[i]},s^{[i]})$; otherwise, go to Step~4.}

\Step{4}{Set $K_1=\min\{|\T^{[i]}_{\text{rel}}|,\lceil\theta_1|\N^{[i]}|\rceil\}$ and $K_2=\min\{|\T^{[i]}_{\text{abs}}|,\lceil\theta_2|\N^{[i]}|\rceil\}$.
According to the selection rule~\eqref{choose}, update the active set as $\N^{[i+1]}=\N^{[i]}\cup\widehat{\T}^{[i]}(K_1)\cup\widetilde{\T}^{[i]}(K_2)$. Set $i=i+1$ and return to Step 1.
}
\end{algorithmic}
\end{algorithm}

\begin{proposition}\label{converseries}
Suppose that the sparse problem~\eqref{sparse} associated with the
initial active set $\N^{[0]}$ is feasible. Let $\N^{[i]}$ be updated by Algorithm \ref{update}. For each iteration $i$, let $x_{\N^{[i]}}$ be an optimal solution of~\eqref{sparse} with $\N=\N^{[i]}$ and define its zero extension by $x^{[i]}=\ext_{\N^{[i]}}(x_{\N^{[i]}})$. Then there exists $i_0\ge 0$ such that $c^\top x^{[i]}\ge c^\top x^{[i+1]}$ for all $0\leq i\leq i_0-1$, and $ x^{[i_0]}$ is an optimal solution of~\eqref{full}.
\end{proposition}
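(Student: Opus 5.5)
The argument rests on three facts: the active sets are nested and finite, feasibility is inherited along the nested sets, and a dual-feasibility check certifies optimality. Since $\N^{[i+1]}=\N^{[i]}\cup\widehat{\T}^{[i]}(K_1)\cup\widetilde{\T}^{[i]}(K_2)\supseteq\N^{[i]}$, the sets increase. Algorithm~\ref{reduction} can only add further edges, so the nesting is preserved; with $\varepsilon_0=0$ and a feasible subproblem, every $d_k=0$ and Step~3 is never entered. Each $\N^{[i]}$ is also a subset of the finite set $\U\times\V$.

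\emph{Feasibility and monotone objective.} Feasibility of \eqref{sparse} for $\N^{[0]}$ propagates. Indeed, a feasible $x$ for $\N^{[i]}$, padded with zeros on $\N^{[i+1]}\setminus\N^{[i]}$, is feasible for $\N^{[i+1]}$, and it has the same objective value. Each sparse LP is feasible and bounded below by $0$, since $c\ge0$ and $x\ge0$, so an optimal solution $x_{\N^{[i]}}$ and an optimal dual $\lambda^{[i]}$ exist by LP duality. The padded optimal solution for $\N^{[i]}$ is feasible for the enlarged problem, hence
\[
c^\top x^{[i+1]}=c_{\N^{[i+1]}}^\top x_{\N^{[i+1]}}\le c_{\N^{[i]}}^\top x_{\N^{[i]}}=c^\top x^{[i]},
\]
where we used $c^\top\ext_\N(x)=c_\N^\top x$.

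\emph{Optimality certificate.} Suppose $\T^{[i]}_{\rm rel}\cup\T^{[i]}_{\rm abs}=\emptyset$. Then $\alpha^{[i]}_k+\beta^{[i]}_l\le C_{kl}$ for every pair $(k,l)$. When $C_{kl}>0$ this follows from the ratio being at most $1$, and when $C_{kl}=0$ it follows directly. Hence $s^{[i]}=c-\bar A^\top\lambda^{[i]}\ge0$, so $\lambda^{[i]}$ is dual feasible for \eqref{full}. By strong duality for the sparse LP, $c^\top x^{[i]}=\bar b^\top\lambda^{[i]}$. Moreover, $x^{[i]}$ is feasible for \eqref{full} because $\bar A\,\ext_\N(x)=\bar A_\N x$. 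Weak duality for \eqref{full} then shows that $x^{[i]}$ is optimal. This is the contrapositive of the remark made before \eqref{choose}.

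\emph{Termination.} Let $i_0$ be the first index at which the violated set is empty, or at which the termination criterion, taken to be this optimality check, is met. If the violated set is nonempty at step $i$, then $K_1+K_2\ge1$, because $\lceil\theta_1|\N^{[i]}|\rceil\ge1$ and $\lceil\theta_2|\N^{[i]}|\rceil\ge1$. We must also check that every violated edge lies outside $\N^{[i]}$. Complementary slackness and dual feasibility of the sparse LP give $\alpha_{i_t}+\beta_{j_t}\le C_{i_tj_t}$ on $\N^{[i]}$, so no edge of $\N^{[i]}$ is violated. Hence $|\N^{[i+1]}|>|\N^{[i]}|$. Since $|\N^{[i]}|\le mn$, the strict growth cannot continue indefinitely, and $i_0\le mn-|\N^{[0]}|$ exists. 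At the latest, $\N^{[i]}=\U\times\V$, where the sparse problem coincides with \eqref{full}.

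The main obstacle is the strict-growth step: one must rule out that the selected violated edges already belong to $\N^{[i]}$, which requires the dual feasibility of $\lambda^{[i]}$ on $\N^{[i]}$. A second concern is making sure Algorithm~\ref{reduction} does not disturb the nesting or feasibility. I would handle it by noting that it only adds edges, so both arguments above still apply.
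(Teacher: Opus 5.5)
Your proposal is correct and follows the same route as the paper's proof. Zero-padding the previous optimal solution gives a feasible point on the enlarged active set, which yields the monotone objective, and strict enlargement of $\mathcal{N}^{[i]}$ inside a finite ground set forces termination at an optimal point. You also make explicit two steps the paper leaves implicit: dual feasibility of $\lambda^{[i]}$ on $\mathcal{N}^{[i]}$ ensures the selected violated edges are new, so $K_1+K_2\ge 1$ really gives $\mathcal{N}^{[i]}\subsetneq\mathcal{N}^{[i+1]}$, and an empty violated set certifies optimality of $x^{[i]}$ through weak duality.
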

\begin{proof}At any iteration before termination, since $ \T^{[i]}_{\textup{rel}}\cup
\T^{[i]}_{\textup{abs}}\ne\varnothing$, $\N^{[i]}\ne\varnothing$, and $\theta_1,\theta_2>0$, we have
$|\T^{[i]}_{\text{rel}}|+|\T^{[i]}_{\text{abs}}|\ge 1$ and $\lceil\theta_1|\N^{[i]}|\rceil,\lceil\theta_2|\N^{[i]}|\rceil\ge1$. Thus $K_1+K_2\ge 1$, and $\N^{[i]}\subsetneq\N^{[i+1]}$. Define $\bar x^{[i]}=\vecc_{\N^{[i+1]}}(\mat(x^{[i]}))$. Then $\bar x^{[i]}$ is feasible for the sparse problem with $\N=\N^{[i+1]}$. The optimality of $x_{\N^{[i+1]}}$ therefore implies
$c^\top x^{[i]}=c_{\N^{[i+1]}}^\top \bar x^{[i]}\ge c_{\N^{[i+1]}}^\top x_{\N^{[i+1]}}=c^\top x^{[i+1]}$. The full problem is finite-dimensional and the active set is strictly enlarged. Thus, there exists $i_0\ge 0$ such that $x^{[i_0]}$ is an optimal solution of~\eqref{full}.
\qed
\end{proof}

Algorithm~\ref{update} requires only the restricted cost vector
$c_\N\in\mathbb{R}^{\cN}$ when solving each sparse subproblem. If $C_{ij}$ is given by $C_{ij}=h(u_i,v_j)$, the required entries of $C$ can be generated on demand
to form $c_\N$, update the active set and detect violated dual constraints by evaluating $C_{ij}-\alpha_i-\beta_j$. Thus, the full cost matrix need not be stored, substantially reducing memory usage for large-scale problems.

The practical efficiency of Algorithm~\ref{update} depends primarily
on two factors.
First, the choice of $\N^{[0]}$ is crucial for reducing the number of active-set updates and accelerating the overall algorithm~\cite{liu2022multiscale}. Accordingly, Section~\ref{multiscale method} presents a multiscale framework that constructs the fine-scale initial active set from the support of a coarse-scale solution, while Section~\ref{supportiden} develops a strategy for identifying this support.
Second, an efficient LP solver tailored to the subproblems~\eqref{sparse} is needed. Since Algorithm~\ref{update} yields a
sequence of closely related sparse subproblems, the solver should exploit
their special structure and support warm starts. To this end, we
give a warm-start strategy in Section~\ref{multiscale method} and develop IPRM in Section~\ref{interior-point relaxation method}.

\subsection{Exploring the Constraint Nondegeneracy Condition}
We end this section by discussing the primal and dual constraint nondegeneracy conditions for problem \eqref{sparse}, which play a critical role in the local
convergence analysis of LP solvers~\cite{hou2024sparse} and in sensitivity
analysis~\cite{cartis2016active}.
By Proposition~\ref{rankA},
problem~\eqref{sparse} can be decomposed into $c_0$ subproblems, each of
the form~\eqref{subproblem}. It therefore suffices to consider a single connected component. Accordingly, we assume without loss of generality that the subgraph $\G_\N$ is connected, so that $\rank(\bar A_\N)=m+n-1$. Let $A$ and $A_\N $ be the matrices obtained by removing the last rows of $\bar A$ and $\bar A_\N $, respectively, and let $b$ be obtained by deleting the last entry of $\bar b$. We consider the following primal--dual LP pair:
\begin{equation*}
\begin{tabular*}{\textwidth}{@{\extracolsep{\fill}} l r @{}}
$\displaystyle \qquad\qquad\quad\min\,\, c_\N^\top x
\quad {\rm s.t.}\,\,A_\N x = b,\,x\in\mathbb{R}^{|\N|}_+$,
&
$(\hypertarget{pp}{\mathbf{P}})$
\\
$\displaystyle \qquad\qquad\quad\max\,\, b^\top\lambda
\quad {\rm s.t.}\,\,A_\N^\top \lambda+s = c_\N,\,
\lambda \in \mathbb{R}^{m+n-1},\,s\in \mathbb{R}^{|\N|}_+$,
&
$(\hypertarget{dp}{\mathbf{D}})$
\end{tabular*}
\end{equation*}
where $A_\N\in\mathbb{R}^{(m+n-1)\times \cN},\,c_\N\in\mathbb{R}^{\cN}$ and $ b\in\mathbb{R}^{m+n-1}$. 

Let $x_\N^*$ be an optimal solution of \eqf{pp}. Define $X^*=\mat_\N(x^*_\N)$ and $x^*=\ext_\N(x^*_\N)$. Then $x^*$ is feasible for \eqref{full}. 
Following \cite{robinson2009local} and \cite[Definition 2]{liang2025squared}, 
we say that $x^*_\N $ is primal constraint nondegenerate if $\rank(A_{\spt_\G(X^*)})=m+n-1$. 
By Proposition~\ref{rankA}, this condition is equivalent to the
connectivity of the subgraph $\G_{X^*}=(\U\cup\V,\spt_\G(X^*))$. Similarly, let $(\lambda^*,s_\N^*)$ be an optimal solution of~\eqf{dp}. Define
$S^*=\mat_\N(s^*_\N)$. Then we say that $(\lambda^*,s^*_\N)$ is dual constraint nondegenerate if $\rank(A_{\N\setminus\spt_\G(S^*)})=\cN-|\spt(s^*_\N)|$.
To characterize primal constraint nondegeneracy, we introduce the
following subset of $\D(m,n)$:
\begin{equation}\label{nondegene}
   \E(m,n)=\{\bar b\in\D(m,n)\mid
   \rank(A_{\spt_\G(\mat(x))})=m+n-1\text{ for all }x\in\Fb\}.
\end{equation}

Proposition~\ref{propo2} provides a necessary and sufficient condition for
$\bar b\in\E(m,n)$.

\begin{proposition}\label{propo2}
For a vector $\bar b=(\bm\mu,\bm\nu)\in\D(m,n)$, we have $\bar b\in\E(m,n)$ if and only if there exist no nontrivial partitions $\{\I_1, \I_2\}$ of $\U$ and $\{\J_1, \J_2\}$ of $\V$ such that
 \begin{equation}\label{summu}
 \mathbf{1}_{|\I_1|}^\top\bm\mu_{\I_1}=\mathbf{1}_{|\J_1|}^\top\bm\nu_{\J_1},
\quad \mathbf{1}_{|\I_2|}^\top\bm\mu_{\I_2}=\mathbf{1}_{|\J_2|}^\top\bm\nu_{\J_2}.\end{equation}
\end{proposition}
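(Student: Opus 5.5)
We prove both directions by contraposition, translating rank conditions into connectivity via Proposition~\ref{rankA}. For any feasible $x\in\Fb$ with $X=\mat(x)$, the graph $\G_X=(\U\cup\V,\spt_\G(X))$ has no isolated vertices, because $\bar b>0$ forces every row and column of $X$ to carry positive mass. Deleting one row from the incidence matrix of a connected bipartite graph leaves its rank unchanged, since the rows always sum in a redundant way. Hence, by Proposition~\ref{rankA}, $\rank(A_{\spt_\G(X)})=m+n-1$ holds if and only if $\G_X$ is connected. So $\bar b\in\E(m,n)$ means exactly that every feasible plan has a connected support graph.

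\emph{($\Leftarrow$) contrapositive.} Suppose some $x\in\Fb$ has a disconnected support graph $\G_X$. By Proposition~\ref{rankA}, its components give partitions $\{\I_k\}_{k=1}^{c_0}$ of $\U$ and $\{\J_k\}_{k=1}^{c_0}$ of $\V$ with $c_0\ge2$. Each component has nonempty $\I_k$ and nonempty $\J_k$: a component cannot be a single vertex, since there are no isolated vertices, and every edge joins $\U$ to $\V$. The support of $X$ lies inside $\bigcup_k \I_k\times\J_k$, so summing the marginal constraints over the $k$-th block gives $\mathbf 1^\top\bm\mu_{\I_k}=\sum_{(i,j)\in\I_k\times\J_k}X_{ij}=\mathbf 1^\top\bm\nu_{\J_k}$. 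Set $\I_1'=\I_1$, $\J_1'=\J_1$, and let $\I_2',\J_2'$ be the unions of the remaining blocks. These are nontrivial partitions satisfying \eqref{summu}.

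\emph{($\Rightarrow$) contrapositive.} Suppose nontrivial partitions satisfying \eqref{summu} exist, and let $M_k:=\mathbf 1^\top\bm\mu_{\I_k}=\mathbf 1^\top\bm\nu_{\J_k}$. Then $M_k>0$, because $\bar b>0$ and the sets are nonempty. Build the block plan
\[
X_{\I_k\J_k}=\bm\mu_{\I_k}\bm\nu_{\J_k}^\top/M_k,\quad k=1,2,
\]
with all cross entries set to zero. This is the product coupling $\bar X$ applied blockwise. Its row sums are $\bm\mu$ and its column sums are $\bm\nu$, so $x=\vecc(X)\in\Fb$. The graph $\G_X$ has no edges between $\I_1\cup\J_1$ and $\I_2\cup\J_2$, so it is disconnected. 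By the equivalence above, $\rank(A_{\spt_\G(X)})<m+n-1$, and therefore $\bar b\notin\E(m,n)$.

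\textbf{Main subtlety.} The only delicate step is the rank equivalence after deleting the last row. Proposition~\ref{rankA} gives $\rank(\bar A_{\N})=m+n-c_0$, and we need the same for $A_\N$. For a connected component this holds because the unique row dependency, with sign pattern $+\mathbf 1$ on $\U$ and $-\mathbf 1$ on $\V$, involves every row, so any single row can be removed. For disconnected graphs, removing a row can only lower the rank or keep it, so it stays below $m+n-1$. Either way the equivalence is preserved.
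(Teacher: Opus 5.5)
Your proof is correct and follows essentially the same route as the paper. In one direction you glue feasible plans of the two balanced blocks into a block-diagonal plan whose support graph is disconnected; in the other you read the two partitions off the connected components of a disconnected support graph and sum the marginal constraints blockwise. Beyond this, you write the block plans explicitly as blockwise product couplings, and you check that deleting the last row of the incidence matrix preserves the rank--connectivity equivalence from Proposition~\ref{rankA}, which the paper takes for granted.
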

\begin{proof}
``$\Longrightarrow$'' Assume, for contradiction, that $\bar b\in\E(m,n)$ and that there exist nontrivial partitions $\{\I_1,\I_2\}$ of $\U$ and $\{\J_1,\J_2\}$ of $\V$ satisfying~\eqref{summu}. Then
$\bar b_1=(\bm\mu_{\I_1},\bm\nu_{\J_1})
\in\D(|\I_1|,|\J_1|)$ and
$\bar b_2=(\bm\mu_{\I_2},\bm\nu_{\J_2})
\in\D(|\I_2|,|\J_2|)$. Hence, the feasible sets
$\F_{\bar b_1}$ and $\F_{\bar b_2}$ are both nonempty. Choose
$\tilde{x}_1\in\F_{\bar b_1}$ and
$\tilde{x}_2\in\F_{\bar b_2}$, and construct
$\widetilde{X}
=\mat_{\I_1\times\J_1}(\tilde{x}_1)
 +\mat_{\I_2\times\J_2}(\tilde{x}_2)$.
Then
$\widetilde{X}\bm 1_n=\bm\mu$ and
$\widetilde{X}^{\top}\bm 1_m=\bm\nu$, so that
$\tilde{x}:=\operatorname{vec}(\widetilde{X})\in\F_{\bar b}$.
Moreover, the graph $(\U\cup\V,\spt_\G(\widetilde{X}))$ contains no edge
between $\I_1\cup\J_1$ and $\I_2\cup\J_2$ and is therefore
disconnected. This contradicts the assumption that $\bar b\in\E(m,n)$.

``$\Longleftarrow$'' Assume that no nontrivial partitions of $\U$ and $\V$ satisfy
\eqref{summu}, but that $\bar b\notin\E(m,n)$. By the
definition of $\E(m,n)$, there exists
$\tilde{x}\in\F_{\bar b}$ such that the graph $(\U\cup\V,\spt_\G(\mat(\tilde x)))$ is disconnected. Partition its connected components into two nonempty collections. This induces nontrivial partitions
$\{\I_1,\I_2\} $ of $\U$ and
$\{\J_1,\J_2\}$ of $\V$. Since there are no edges
between $\I_1\cup\J_1$ and $\I_2\cup\J_2$, the marginal constraints
imply \eqref{summu}, contradicting the assumption.\qed
\end{proof}
\begin{corollary}\label{aggregate}
Let $\{\I_1,\cdots,\I_p\}$ and $\{\J_1,\cdots,\J_q\}$ be nontrivial partitions of $\U$ and $\V$, respectively. Suppose that $\bar b=(\bm\mu,\bm\nu)\in\mathcal{E}(m,n)$. Define the aggregated marginals $\tilde {\bm\mu}\in\mathbb{R}^p$ and $\tilde  {\bm\nu}\in\mathbb{R}^q$ componentwise by
$\tilde{\mu}_k=\sum_{u_i\in\I_k}\mu_i$, $1\leq k\leq p $ and $\tilde\nu_l=\sum_{v_j\in\J_l}\nu_j$, $1\leq l\leq q$.
Then $(\tilde{\bm\mu},\tilde{\bm\nu})\in\E(p,q)$.
\end{corollary}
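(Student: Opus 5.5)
We argue by contraposition and apply Proposition~\ref{propo2} twice: once to the aggregated pair and once to the original pair. First observe that $(\tilde{\bm\mu},\tilde{\bm\nu})\in\D(p,q)$. Every block $\I_k$ and $\J_l$ is nonempty and $\bar b>0$, so all aggregated entries are positive. Moreover, $\mathbf 1_p^\top\tilde{\bm\mu}=\mathbf 1_m^\top\bm\mu=\mathbf 1_n^\top\bm\nu=\mathbf 1_q^\top\tilde{\bm\nu}$. Note also that $p,q\ge 2$ because both partitions are nontrivial, so nontrivial bipartitions of the index sets $\{1,\dots,p\}$ and $\{1,\dots,q\}$ make sense.

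Suppose, for contradiction, that $(\tilde{\bm\mu},\tilde{\bm\nu})\notin\E(p,q)$. By Proposition~\ref{propo2}, applied with source set of size $p$ and target set of size $q$, there exist nontrivial partitions $\{P_1,P_2\}$ of $\{1,\dots,p\}$ and $\{Q_1,Q_2\}$ of $\{1,\dots,q\}$ such that
\[
\sum_{k\in P_r}\tilde\mu_k=\sum_{l\in Q_r}\tilde\nu_l,\qquad r=1,2 .
\]
We lift these to the original sets by setting $\I'_r:=\bigcup_{k\in P_r}\I_k$ and $\J'_r:=\bigcup_{l\in Q_r}\J_l$ for $r=1,2$.

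The pair $\{\I'_1,\I'_2\}$ is a partition of $\U$. Its members are disjoint because the $\I_k$ are pairwise disjoint and $P_1\cap P_2=\varnothing$. Their union is $\U$ because $P_1\cup P_2=\{1,\dots,p\}$. Each member is nonempty because $P_r\neq\varnothing$ and each $\I_k\neq\varnothing$. So the partition is nontrivial, and the same reasoning shows $\{\J'_1,\J'_2\}$ is a nontrivial partition of $\V$.

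By the definition of the aggregated marginals,
\[
\mathbf 1^\top\bm\mu_{\I'_r}=\sum_{k\in P_r}\tilde\mu_k=\sum_{l\in Q_r}\tilde\nu_l=\mathbf 1^\top\bm\nu_{\J'_r},\qquad r=1,2 .
\]
This is exactly condition~\eqref{summu} for the original $\bar b$. Proposition~\ref{propo2} then gives $\bar b\notin\E(m,n)$, contradicting the hypothesis.

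There is no real obstacle. The only points needing care are checking that the aggregated pair lies in $\D(p,q)$, so that Proposition~\ref{propo2} applies to it, and checking that the lifted sets are nonempty, which uses the nonemptiness of the partition members.
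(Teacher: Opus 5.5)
Your argument is correct and is essentially the paper's proof. The paper only says the result follows from the definition of $\E(m,n)$ and Proposition~\ref{propo2}; you have supplied the omitted details, namely that $(\tilde{\bm\mu},\tilde{\bm\nu})\in\D(p,q)$ and that nontrivial bipartitions of the aggregated index sets lift to nontrivial partitions of $\U$ and $\V$ satisfying \eqref{summu}. One point you leave tacit is that Proposition~\ref{propo2} may be applied with $p>q$; this is harmless, since its proof never uses the standing assumption $m\le n$.
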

\begin{proof}
The conclusion follows from the definition of $\E(m,n)$ and Proposition~\ref{propo2}.\qed
\end{proof}

Here, we provide an example of a vector in $\mathcal{E}(m,n)$. Let $\bm\nu=(e^{m+1},\cdots,e^{m+n})$ and $\bm\mu=(e^1,\cdots,e^{m-1},\mathbf{1}_n^\top\bm\nu-\sum_{j=1}^{m-1}e^{j})$, where $n\geq m$ and $e$ denotes Euler's number. Then $\bar b=(\bm\mu,\bm\nu)\in\mathcal{E}(m,n)$.
Suppose, to the contrary, that $\bar b\notin\E(m,n)$. By Proposition~\ref{propo2}, there exist nontrivial partitions $\{\I_1,\I_2\}$ of $\U$ and $\{\J_1, \J_2\}$ of $\V$ such that the equalities in \eqref{summu} hold.
Without loss of generality, assume that $u_m\in\I_2$. However, the first equality in \eqref{summu} then yields $\sum_{u_i\in\I_1}e^i-\sum_{v_j\in\J_1}e^{m+j}=0$.
This contradicts the transcendence of $e$. Therefore, $\bar b\in\E(m,n)$.

 Equip the space $\mathcal H(m,n)$ with the
$(m+n-1)$-dimensional Lebesgue measure. With respect to this measure, Theorem~\ref{zeromeasure} states that almost every
$\bar b\in\D(m,n)$ belongs to $\E(m,n)$. Its proof is provided in Appendix~\ref{proof:theorem-0}.

\begin{theorem}\label{zeromeasure}
For almost every $\bar b\in\D(m,n)$, the following statement holds:
for every active set $\N$ such that $\G_\N$ is connected and~\eqf{pp} is feasible, and for every cost vector
$c_\N\in\mathbb{R}^{|\N|}_+$, any optimal solution of~\eqf{pp} is
primal constraint nondegenerate.
\end{theorem}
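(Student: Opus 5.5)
The plan is to show that the exceptional set $\D(m,n)\setminus\E(m,n)$ has Lebesgue measure zero in $\mathcal H(m,n)$. I will then check that every $\bar b\in\E(m,n)$ satisfies the statement of Theorem~\ref{zeromeasure}.

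\emph{Step 1: the exceptional set is null.} By Proposition~\ref{propo2}, $\bar b\in\D(m,n)\setminus\E(m,n)$ if and only if there are nontrivial partitions $\{\I_1,\I_2\}$ of $\U$ and $\{\J_1,\J_2\}$ of $\V$ with $\mathbf 1^\top\bm\mu_{\I_1}=\mathbf 1^\top\bm\nu_{\J_1}$. The second equality in \eqref{summu} then follows automatically from total mass balance on $\mathcal H(m,n)$. So the exceptional set is contained in the finite union, over all such partition pairs, of the sets
\[
H_{\I_1,\J_1}:=\bigl\{\bar b\in\mathcal H(m,n)\;\big|\;\ell_{\I_1,\J_1}(\bar b):=\mathbf 1^\top\bm\mu_{\I_1}-\mathbf 1^\top\bm\nu_{\J_1}=0\bigr\}.
\]
Each $H_{\I_1,\J_1}$ is the zero set of a linear functional on $\mathcal H(m,n)$, so it has measure zero provided this functional is not identically zero on $\mathcal H(m,n)$. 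To check this, I will exhibit a point of $\mathcal H(m,n)$ where it is nonzero. Since $\I_2\neq\varnothing$, pick $u_a\in\I_2$; since $\J_1\neq\varnothing$, pick $v_b\in\J_1$. Take $\bm\mu=e_a$ and $\bm\nu=e_b$. This point lies in $\mathcal H(m,n)$, and at it $\ell_{\I_1,\J_1}=0-1\neq 0$. The kernel of $\ell_{\I_1,\J_1}$ is therefore an $(m+n-2)$-dimensional subspace, hence null. A finite union of null sets is null, so $\D(m,n)\setminus\E(m,n)$ is null.

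\emph{Step 2: every $\bar b\in\E(m,n)$ has the required property.} Fix any active set $\N$ with $\G_\N$ connected and \eqf{pp} feasible, and any cost vector $c_\N\ge 0$. Let $x_\N^*$ be optimal for \eqf{pp}.
\begin{itemize}
\item Because $\G_\N$ is connected, the deleted last row of $\bar A_\N$ is redundant (Proposition~\ref{rankA}). Hence $x_\N^*$ also satisfies $\bar A_\N x_\N^*=\bar b$.
\item It follows that $x^*=\ext_\N(x_\N^*)\in\Fb$.
\item Since $\bar b\in\E(m,n)$, definition \eqref{nondegene} gives $\rank(A_{\spt_\G(\mat(x^*))})=m+n-1$.
\item This is exactly primal constraint nondegeneracy of $x_\N^*$, because $\spt_\G(X^*)=\spt_\G(\mat(x^*))$.
\end{itemize}
Notably, optimality and the cost vector play no role here: every feasible point is nondegenerate. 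This is why the conclusion holds uniformly over all $\N$ and all $c_\N$.

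\emph{Main obstacle.} The only delicate point is Step 1, namely that each defining functional is nontrivial on the subspace $\mathcal H(m,n)$, not merely on $\mathbb R^{m+n}$. The explicit witness $(e_a,e_b)$ settles this. The uniformity over uncountably many $\N$ and $c_\N$ causes no measure-theoretic difficulty, since the exceptional set in Step 1 does not depend on them.
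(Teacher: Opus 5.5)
Your proposal is correct and takes essentially the same route as the paper: both use Proposition~\ref{propo2} to place $\mathcal{D}(m,n)\setminus\mathcal{E}(m,n)$ inside a finite union of $(m+n-2)$-dimensional subspaces of $\mathcal{H}(m,n)$. You add an explicit witness that each defining functional is nontrivial on $\mathcal{H}(m,n)$ and spell out the passage from $\bar b\in\mathcal{E}(m,n)$ to nondegeneracy of every optimal solution, which the paper leaves implicit, whereas the paper additionally shows that $\mathcal{D}(m,n)$ is open in $\mathcal{H}(m,n)$ and hence has positive measure, which only ensures the statement is not vacuous.
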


The conclusion of Theorem~\ref{zeromeasure} is stronger than that of \cite[Lemma~2]{ge2025interior}. It shows that a randomly chosen
$\bar b\in\D(m,n)$ belongs to $\E(m,n)$ almost surely.
Consequently, this conclusion is independent of the choices of the
active set $\N$ and the cost vector
$c_\N\in\mathbb{R}^{|\N|}$, provided that $\G_\N$ is connected and~\eqf{pp} is feasible. The primal constraint nondegeneracy condition, in turn, guarantees that the dual optimal solution of \eqf{dp} is unique \cite[Theorem 4.5]{sierksma2001linear}. This result will play a key role in the construction of our multiscale framework.

\section{A Multiscale Primal--Dual Interior-Point Relaxation Method}\label{sectioniprm}
This section presents MSIPRM for solving large-scale OT problems. The multiscale framework constructs a sequence of sparse subproblems
on successively finer scales to solve the original large-scale OT
problem.
At each scale, IPRM is used to solve the resulting sparse subproblems. As an efficient LP solver, IPRM supports effective warm starts, so it is well suited to the multiscale framework.

\subsection{Multiscale Method}\label{multiscale method}
As discussed in Section \ref{updateN}, the choice of the initial active set $\N^{[0]}$ is crucial to the computational efficiency of Algorithm~\ref{update}. 
Multiscale methods have been widely used in OT
\cite{liu2022multiscale,oberman2015efficient,schmitzer2016sparse}. They provide an effective way to construct an initial active set
$\N^{[0]}$ for a finer-scale problem from the support of the coarser-scale solution.

For ease of presentation, we consider OT problems between two grayscale images, so that $\U$ and $\V$ inherit natural grid structures. The cost function $h(u,v)$ is given by $h(u,v)=g(u-v)$, where $g(\cdot)$ is continuous. A typical example is the squared Euclidean cost $g(\cdot)=\|\cdot\|^2$. The discrete measures are obtained by normalizing grayscale values.
The multiscale framework is motivated by both geometric and theoretical considerations.
Geometrically, Figure~\ref{multipic} illustrates that the supports of 
high-accuracy numerical solutions exhibit similar patterns across distinct resolutions.
Theoretically, the framework relies on stability results for OT problems; see~\cite[Theorem 5.20]{villani2008optimal} and~\cite[Theorem 4]{oberman2015efficient}. We next describe the construction of our multiscale framework.

\begin{figure}
\captionsetup{font=small}
    \centering
        \includegraphics[width=0.16\textwidth]{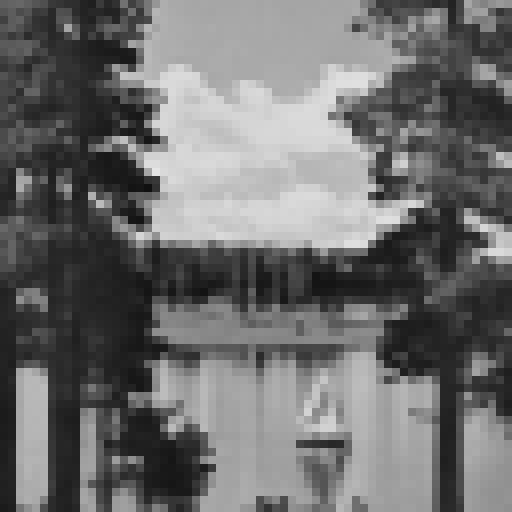}
        \includegraphics[width=0.16\textwidth]{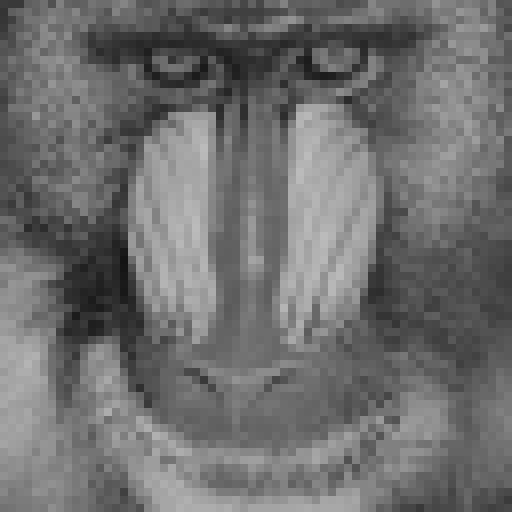}
        \includegraphics[width=0.16\textwidth]{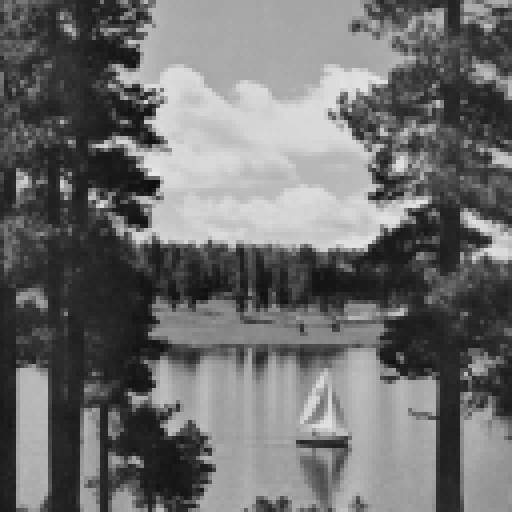}
        \includegraphics[width=0.16\textwidth]{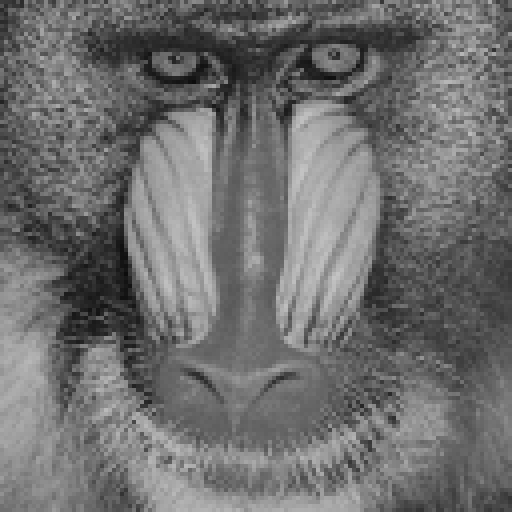}
        \includegraphics[width=0.16\textwidth]{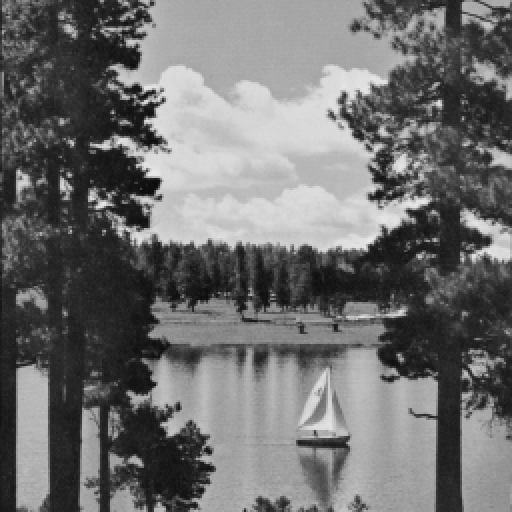}
        \includegraphics[width=0.16\textwidth]{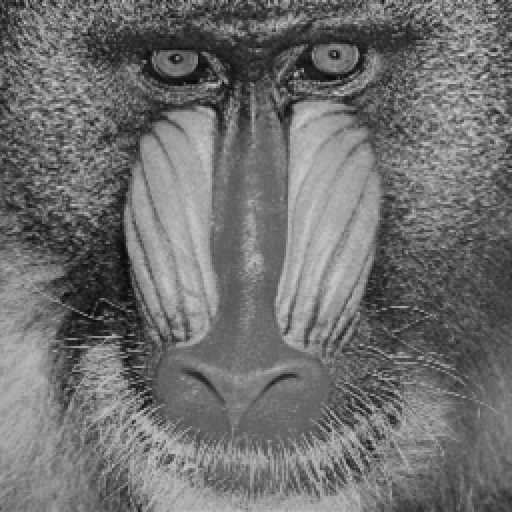}
        \includegraphics[width=0.32\textwidth]{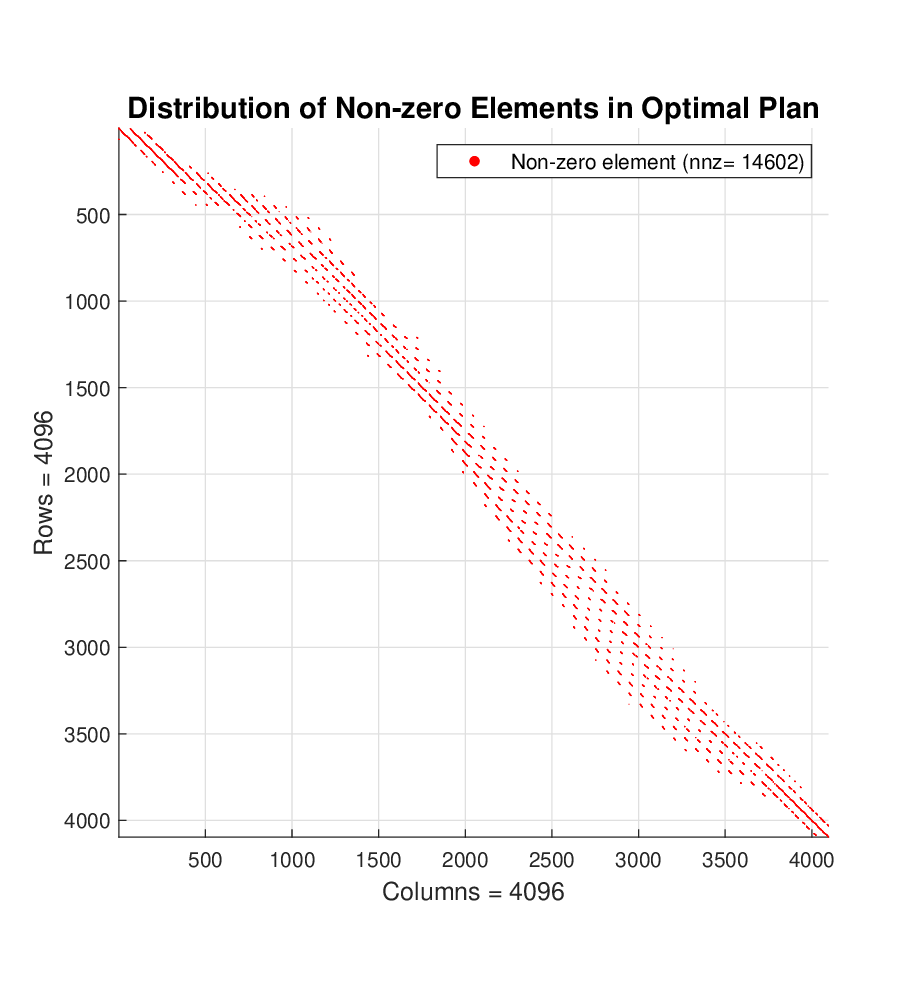}
        \includegraphics[width=0.32\textwidth]{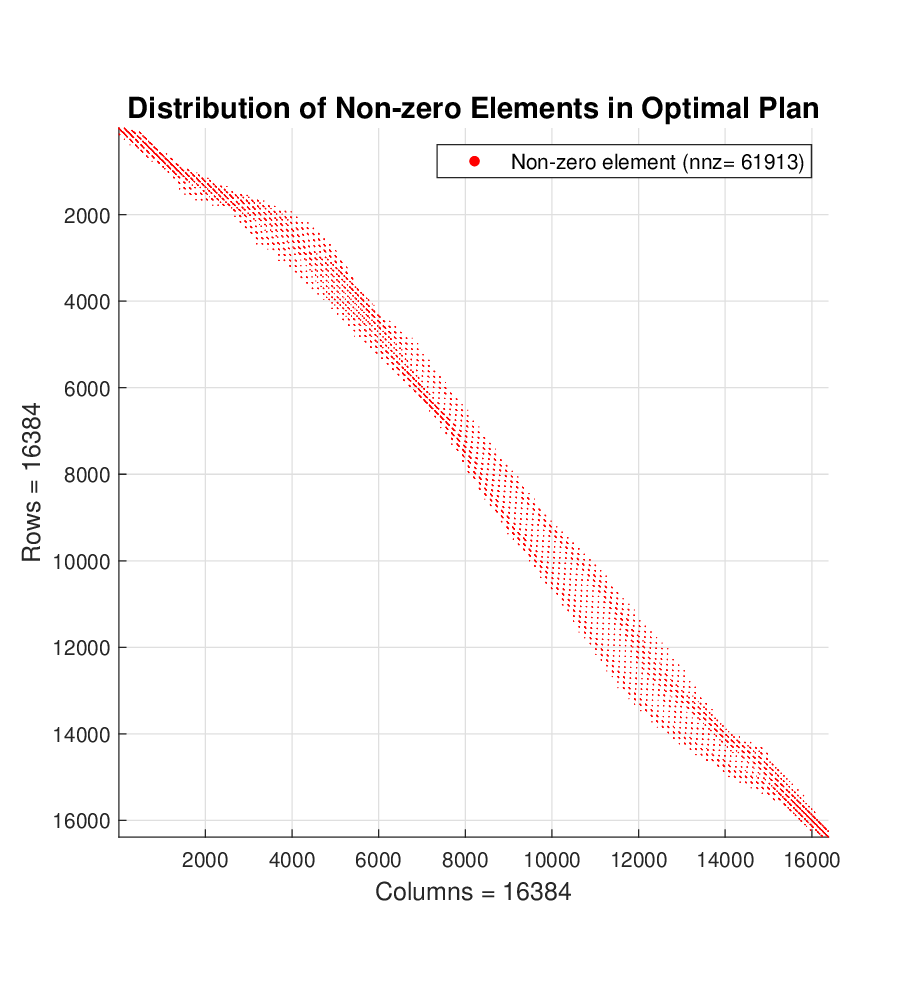}
        \includegraphics[width=0.32\textwidth]{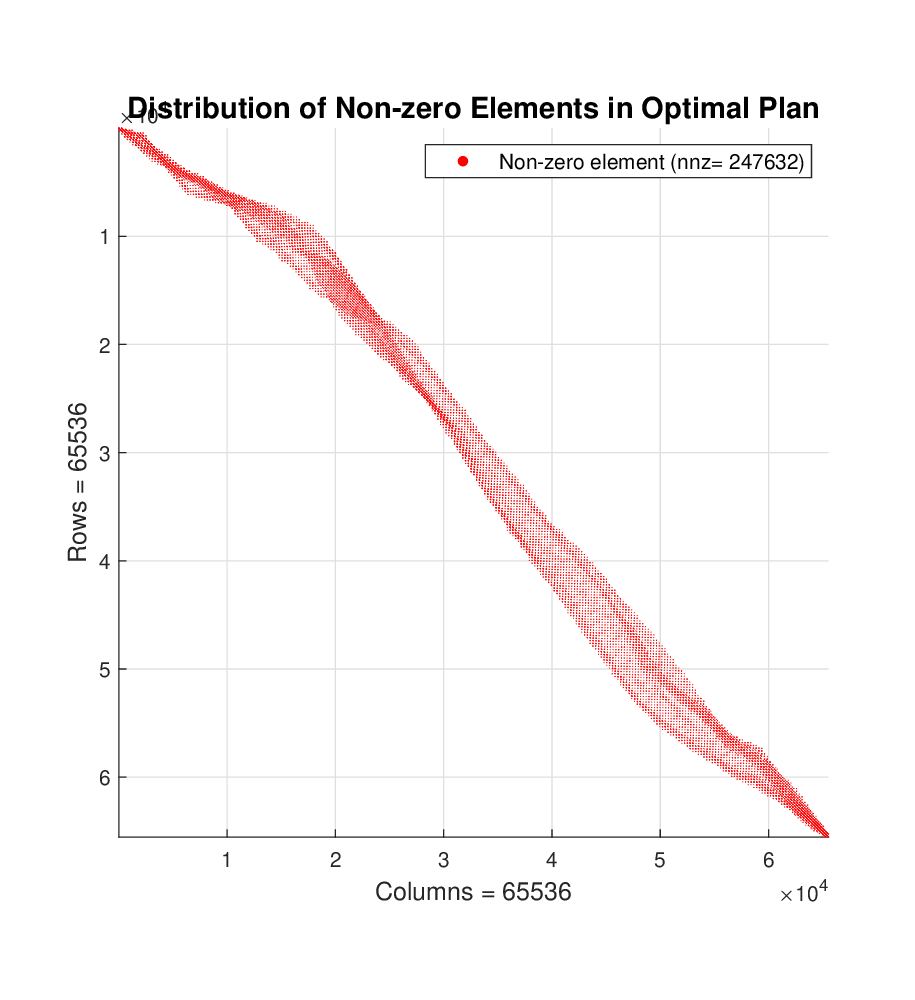}
 \caption{Upper panel: two images at resolutions ranging
    from $64\times 64$ to $256\times 256$. Lower panel: sparsity patterns of the approximate optimal transport plans obtained by discarding entries smaller than 1E-10. The KKT residuals of the approximate solutions are below 1E-12}
 \label{multipic}
\end{figure}

Let $\U$ be a finite set and $K\in\mathbb{Z}_+$. A hierarchical partition of $\U$ is an ordered tuple $(\U_0,\cdots,\U_K)$ such that $\U_0=\{\{u\}\mid u\in \U\}$ and, for each $1\leq\ell\leq K$, $\U_\ell$ is a partition of
$\U$ obtained by merging cells of
$\U_{\ell-1}$. For each $1\leq\ell\leq K$, we write $\U_\ell=\{\bm u_{\ell,i}\}_{i=1}^{m_\ell}$, where $m_\ell:=|\U_\ell|$. More precisely, for each
$\bm u_{\ell,i}\in\U_\ell$, define $
    \child(\bm u_{\ell,i}) :=\left\{
        \bm u_{\ell-1,r}\in\U_{\ell-1}
        \,\middle|\,
        \bm u_{\ell-1,r}\subset\bm u_{\ell,i}\right\}$.
Then $\bm u_{\ell,i}=\bigcup_{\bm u_{\ell-1,r}\in\child(\bm u_{\ell,i})}\bm u_{\ell-1,r}$.
The cells in $\child(\bm u_{\ell,i})$ are called the children of
$\bm u_{\ell,i}$. This hierarchy naturally induces a directed forest whose nodes are the cells in $\U_0,\cdots,\U_K$. For each
$\bm u_{\ell,i}\in\U_\ell$, we choose a representative point
$\rep(\bm u_{\ell,i})$.
Figure~\ref{multiscalex} provides a visual illustration on a grid of $8\times 8$. A hierarchical partition of $\V$ is
constructed in the same manner. We write $\V_\ell=\{\bm v_{\ell,j}\}_{j=1}^{n_\ell}$, where $n_\ell:=|\V_\ell|$ and $1\leq \ell\leq K$.

Let $\bm\mu$ and $\bm\nu$ be discrete measures on $\U$ and
$\V$, respectively. After constructing the hierarchical partitions of $\U$ and $\V$, we define the measures at level $\ell$ by
\begin{equation}\label{multimeasure}
        \bm\mu_\ell(\bm u_{\ell,i})=\sum_{u\in\bm u_{\ell,i}}\bm\mu(u),\quad 
    \bm\nu_\ell(\bm v_{\ell,j})=\sum_{v\in\bm v_{\ell,j}}\bm\nu(v),\quad 1\leq i\leq m_\ell,\,1\leq j\leq n_\ell.
\end{equation}
Consequently, $\bar b_\ell=(\bm\mu_\ell,\bm\nu_\ell)\in\D(m_\ell,n_\ell)$. As shown in Figure \ref{multiscalex}, assume that
$\bm u_{\ell,i}$ and
$\bm v_{\ell,j}$ are contained in hypercubes
centered at $\hat u_{\ell,i}$ and $\hat v_{\ell,j}$, respectively. Their representatives are generated by randomly perturbing their centers
\begin{equation*}
    \rep(\bm u_{\ell,i})=\hat u_{\ell,i}+p_{\bm u_{\ell,i}},
    \quad
    \rep(\bm v_{\ell,j})=\hat v_{\ell,j}+p_{\bm v_{\ell,j}},\quad1\leq i\leq m_\ell,\,1\leq j\leq n_\ell,
\end{equation*}
where $p_{\bm u_{\ell,i}}$ and
$p_{\bm v_{\ell,j}}$ are random points such that $\rep(\bm u_{\ell,i})$ and $\rep(\bm v_{\ell,j})$ lie within their respective hypercubes. The cost function at level $\ell$ is then defined by
\begin{equation}\label{costk}
    h_\ell: \U_\ell\times \V_\ell\to\mathbb{R}_+,\quad h_\ell(\bm u_{\ell,i},\bm v_{\ell,j})=h(\rep(\bm u_{\ell,i}),\rep(\bm v_{\ell,j})),\quad 1\leq \ell\leq K.
\end{equation}
The cost matrix $C_\ell $ and cost vector $c_\ell$ are induced by $h_\ell$. 

The multiscale method begins by solving a small, dense OT problem at the coarsest level $K$. Suppose that an optimal solution $x_{\ell+1}^*$ has been obtained at level $\ell+1$, where $0\leq \ell\leq K-1$. Let $X_{\ell+1}^*=\mat(x^*_{\ell+1})\in\mathbb{R}^{m_{\ell+1}\times n_{\ell+1}}$ be the  optimal transport plan.
We construct the initial active set at level $\ell$ as
\begin{equation}\label{initial}
\N_\ell^{[0]}=\bigcup_{(p,q)\in\spt(X^*_{\ell+1})}\child(\bm u_{\ell+1,p})\times\child(\bm v_{\ell+1,q}).
\end{equation}
We then apply Algorithm~\ref{update} to compute an optimal solution at level $\ell$. Repeating this procedure for
$\ell=K-1,\cdots,0$ yields an optimal solution of the original finest-level OT problem. In what follows, we describe how previously computed solutions are used to warm-start the solver for each sparse subproblem.

 First, suppose that $(\alpha_{\ell+1}^*,\beta_{\ell+1}^*)$ is the dual multiplier associated with $X^*_{\ell+1}$ at level $\ell+1$, where $\alpha_{\ell+1}^*\in\mathbb{R}^{m_{\ell+1}}$ and $\beta_{\ell+1}^*\in\mathbb{R}^{n_{\ell+1}}$. Let the initial active set $\N_\ell^{[0]}$ be generated by~\eqref{initial}. 
For each
$(\bm u_{\ell,i},\bm v_{\ell,j})\in
\U_\ell\times\V_\ell$, there exists a unique pair
$(\bm u_{\ell+1,p},\bm v_{\ell+1,q})\in\U_{\ell+1}\times\V_{\ell+1}$ such that
$\bm u_{\ell,i}\in\child(\bm u_{\ell+1,p})$ and $\bm v_{\ell,j}\in\child(\bm v_{\ell+1,q})$.
We now define an initial transport plan $X_\ell^{(0)}\in\mathbb{R}^{m_\ell\times n_\ell}$ and an approximate cost matrix
$\widehat C_\ell=((\widehat C_\ell)_{ij})\in\mathbb{R}^{m_\ell\times n_\ell}$ as follows:

\begin{equation*}
    (X_\ell^{(0)})_{ij}
    =(X^*_{\ell+1})_{pq}
    \frac{\bm\mu_\ell(\bm u_{\ell,i})}
         {\bm\mu_{\ell+1}(\bm u_{\ell+1,p})}
    \frac{\bm\nu_\ell(\bm v_{\ell,j})}
         {\bm\nu_{\ell+1}(\bm v_{\ell+1,q})},\quad(\widehat C_\ell)_{ij}=h_{\ell+1}(\bm u_{\ell+1,p},\bm v_{\ell+1,q}).
\end{equation*}
Define $(\alpha_\ell^{(0)})_i=(\alpha^*_{\ell+1})_p$ and $(\beta^{(0)}_\ell)_j=(\beta^*_{\ell+1})_q$. The initial point for the sparse problem
\eqref{sparse} with $\N=\N_\ell^{[0]}$ is chosen as
\begin{equation}\label{start1} x^{(0)}=\vecc_{\N^{[0]}_\ell}(X_\ell^{(0)}),\, \lambda^{(0)}=(\alpha_\ell^{(0)},\beta_\ell^{(0)}),\,s^{(0)}=c_{\N^{[0]}_\ell}-\bar A^\top_{\N^{[0]}_\ell}\lambda^{(0)}.\end{equation}

Next, let $\N_\ell^{[i]}$ be the active set associated with the $i$-th
sparse problem at level $\ell$, where $i\ge 0$. Suppose that
$(x_\ell^{[i]},\lambda_\ell^{[i]},s_\ell^{[i]})\in\mathbb{R}^{|\N_\ell^{[i]}|}\times\mathbb{R}^{m_\ell+n_\ell}\times \mathbb{R}^{|\N_\ell^{[i]}|}$ is an optimal primal--dual solution of~\eqref{sparse} with
$\N=\N_\ell^{[i]}$.
Based on the update strategy for $\N_\ell^{[i+1]}$ in Algorithm~\ref{update}, the initial point for
the $(i+1)$-th sparse problem is constructed by retaining the primal
variables indexed by $\N_\ell^{[i]}$ and setting the newly introduced
ones to zero:
\begin{equation}\label{start2}
    x^{(0)}=\vecc_{\N_\ell^{[i+1]}}(\mat_{\N^{[i]}_\ell}(x^{[i]}_\ell)),\quad
    \lambda^{(0)}=\lambda_\ell^{[i]},\quad
    s^{(0)}
    =c_{\N_\ell^{[i+1]}}
     -\bar A_{\N_\ell^{[i+1]}}^\top\lambda_\ell^{[i]}.
\end{equation}

By \cite[Lemma 8]{oberman2015efficient}, $
\bigl(\vecc(X_\ell^{(0)}),\lambda^{(0)},\hat c_\ell-\bar A_\ell^\top\lambda^{(0)})$
is a primal--dual optimal solution of the standard OT problem with
marginal vector $\bar b_\ell$ and cost vector $\hat c_\ell=\vecc(\widehat C_\ell)$. The initialization in~\eqref{start1} therefore
retains the information from the optimal solution of
the perturbed OT problem and it is feasible for the sparse problem~\eqref{sparse} with active set $\N^{[0]}_\ell$.
Similarly, the initialization in~\eqref{start2} is feasible for the sparse problem \eqref{sparse} with active set
$\N_\ell^{[i+1]}$. However, the dual slacks constructed in \eqref{start1} and
\eqref{start2} are not guaranteed to be nonnegative. Hence, these initializations cannot be
directly used as starting points for classical interior-point methods. This limitation motivates the use of IPRM~\cite{liu2022primal}, which
accepts non-interior initial points and is therefore well suited to the warm-start strategy.

The main distinction between our framework and the existing
multiscale methods in~\cite{liu2022multiscale,oberman2015efficient,schmitzer2016sparse} is that we perturb the cost function $h_\ell$ at each level. As discussed above, the support of the optimal solution must be identified from an approximate numerical solution. Solving every
subproblem to very high accuracy would be computationally expensive
and unnecessary.
However, degeneracy in the subproblems may hinder support identification.
Theorem \ref{zeromeasure} shows that almost every $\bar b$ in $\D(m,n)$ belongs to $\E(m,n)$. Furthermore, Corollary~\ref{aggregate} shows that if $\bar b\in\E(m,n)$, then $\bar b_\ell=(\bm\mu_\ell,\bm\nu_\ell)\in\E(m_\ell,n_\ell)$ for all $1\leq \ell\leq K$. 
Combining this inheritance property with the perturbations of
$h_\ell$, we deduce from Lemma~1 of \cite{ge2025interior} that the
resulting subproblems almost surely admit unique solutions satisfying the primal
constraint nondegeneracy condition. These properties are crucial for support identification.
It remains to develop an effective strategy for support
identification and to integrate this strategy into the multiscale framework.
\begin{figure}
    \centering 
    \captionsetup{font=small}

    \begin{minipage}[c]{0.51\textwidth}
        \centering
        \adjustbox{width=1.0\textwidth,valign=c}{
            \begin{tikzpicture}[
                scale=1.8,
                every node/.style={font=\scriptsize},
                blacknode/.style={circle, fill=black, inner sep=0.7pt},
                bluenode/.style={circle, fill=blue!65, draw=black, inner sep=1.2pt},
                greennode/.style={circle, fill=green!45!black, draw=black, inner sep=1.7pt},
                rednode/.style={circle, fill=red, draw=black, inner sep=2.2pt},
                dashline/.style={gray!80!black, dashed, line width=0.8pt},
                blueline/.style={blue!65, line width=1pt},
                greenline/.style={green!45!black, line width=1pt},
                redline/.style={red!65, line width=1pt}
            ]

\draw[redline] (0,0) rectangle (4,4);

\draw[blueline] (1,0) -- (1,4);
\draw[blueline] (3,0) -- (3,4);
\draw[blueline] (0,1) -- (4,1);
\draw[blueline] (0,3) -- (4,3);

\draw[greenline] (2,0) -- (2,4);
\draw[greenline] (0,2) -- (4,2);

\foreach \x in {0.5,1.5,2.5,3.5}
{
    \draw[dashline] (\x,0) -- (\x,4);
}
\foreach \y in {0.5,1.5,2.5,3.5}
{
    \draw[dashline] (0,\y) -- (4,\y);
}

\foreach \i in {0,1,2,3}
{
    \foreach \j in {0,1,2,3}
    {
        \pgfmathsetmacro{\xL}{\i+0.25}
        \pgfmathsetmacro{\xR}{\i+0.75}
        \pgfmathsetmacro{\yB}{\j+0.25}
        \pgfmathsetmacro{\yT}{\j+0.75}

        \draw[dashline] (\xL,\yB) rectangle (\xR,\yT);

        \node[blacknode] at (\xL,\yT) {};
        \node[blacknode] at (\xR,\yT) {};
        \node[blacknode] at (\xL,\yB) {};
        \node[blacknode] at (\xR,\yB) {};
    }
}

\foreach \row in {3,2,1,0}
{
    \foreach \col in {0,1,2,3}
    {
        \pgfmathsetmacro{\xL}{\col+0.25}
        \pgfmathsetmacro{\xR}{\col+0.75}
        \pgfmathsetmacro{\yB}{\row+0.25}
        \pgfmathsetmacro{\yT}{\row+0.75}

        \pgfmathtruncatemacro{\r}{3-\row}
        \pgfmathtruncatemacro{\c}{\col}

        \pgfmathtruncatemacro{\bigrow}{int(\r/2)}
        \pgfmathtruncatemacro{\bigcol}{int(\c/2)}
        \pgfmathtruncatemacro{\bigblock}{2*\bigrow + \bigcol}

        \pgfmathtruncatemacro{\localrow}{mod(\r,2)}
        \pgfmathtruncatemacro{\localcol}{mod(\c,2)}
        \pgfmathtruncatemacro{\localblock}{2*\localrow + \localcol}

        \pgfmathtruncatemacro{\base}{16*\bigblock + 4*\localblock + 1}

        \pgfmathtruncatemacro{\nTL}{\base}
        \pgfmathtruncatemacro{\nTR}{\base+1}
        \pgfmathtruncatemacro{\nBL}{\base+2}
        \pgfmathtruncatemacro{\nBR}{\base+3}

        \node[anchor=south] at (\xL+0.0,\yT+0.005)
            {$u_{\nTL}$};

        \node[anchor=south] at (\xR+0.0,\yT+0.005)
            {$u_{\nTR}$};

        \node[anchor=north] at (\xL+0.0,\yB-0.005)
            {$u_{\nBL}$};

        \node[anchor=north] at (\xR+0.0,\yB-0.005)
            {$u_{\nBR}$};
    }
}
 




%
\node[bluenode] at (0.54,3.50) {};
\node[blue!80!black, anchor=south] at (0.60,3.49) {$u_{1,1}$};

\node[bluenode] at (1.56,3.58) {};
\node[blue!80!black, anchor=south west] at (1.50,3.52) {$u_{1,2}$};

\node[bluenode] at (0.58,2.44) {};
\node[blue!80!black, anchor=north west] at (0.52,2.5) {$u_{1,3}$};

\node[bluenode] at (1.50,2.56) {};
\node[blue!80!black, anchor=south west] at (1.50,2.50) {$u_{1,4}$};

\node[bluenode] at (0.37,1.58) {};
\node[blue!80!black, anchor=north] at (0.37,1.55) {$u_{1,9}$};

\node[bluenode] at (1.55,1.58) {};
\node[blue!80!black, anchor=south west] at (1.52,1.51) {$u_{1,10}$};

\node[bluenode] at (0.58,0.58) {};
\node[blue!80!black, anchor=south west] at (0.52,0.52) {$u_{1,11}$};

\node[bluenode] at (1.57,0.42) {};
\node[blue!80!black, anchor=south west] at (1.53,0.45) {$u_{1,12}$};

\node[bluenode] at (2.54,3.58) {};
\node[blue!80!black, anchor=south west] at (2.52,3.52) {$u_{1,5}$};

\node[bluenode] at (3.48,3.54) {};
\node[blue!80!black, anchor=south west] at (3.46,3.48) {$u_{1,6}$};

\node[bluenode] at (2.49,2.52) {};
\node[blue!80!black, anchor=south west] at (2.47,2.46) {$u_{1,7}$};

\node[bluenode] at (3.50,2.42) {};
\node[blue!80!black, anchor=north west] at (3.46,2.50) {$u_{1,8}$};

\node[bluenode] at (2.57,1.58) {};
\node[blue!80!black, anchor=south west] at (2.54,1.52) {$u_{1,13}$};

\node[bluenode] at (3.48,1.42) {};
\node[blue!80!black, anchor=north west] at (3.44,1.50) {$u_{1,14}$};

\node[bluenode] at (2.44,0.55) {};
\node[blue!80!black, anchor=north west] at (2.30,0.50) {$u_{1,15}$};

\node[bluenode] at (3.57,0.42) {};
\node[blue!80!black, anchor=south west] at (3.47,0.44) {$u_{1,16}$};

\node[greennode] at (1.11,3.09) {};
\node[green!40!black, anchor=south] at (1.1,3.11)
    {\small{$u_{2,1}$}};

\node[greennode] at (0.90,1.05) {};
\node[green!40!black, anchor=south] at (0.89,1.07)
    {\small{$u_{2,3}$}};

\node[greennode] at (3.06,3.01) {};
\node[green!40!black, anchor=south] at (3.01,3.03)
    {\small{$u_{2,2}$}};

\node[greennode] at (3.08,0.90) {};
\node[green!40!black, anchor=south] at (3.09,0.94)
    {\small{$u_{2,4}$}};

\node[rednode] at (2.08,2.13) {};
\node[red, anchor=south] at (2.14,2.19)
    {\small{$u_{3,1}$}};

            \end{tikzpicture}
        }
    \end{minipage}
    \hfill
    \begin{minipage}[c]{0.47\textwidth}
        \centering
        \begin{equation*}
        \begin{aligned}
        &\U_0 = \big{\{}\{u_i\}\big{\}}_{1\leq i\leq 64}; \\ 
        &\U_1 = \big{\{}\{u_{4i-3},\cdots, u_{4i}\}\big{\}}_{1\leq i\le16}; \\
        &\U_2 = \big{\{}\{u_{16i-15}, \cdots,u_{16i}\}\big{\}}_{1\leq i\leq 4}; \\ 
        &\U_3 = \big{\{}\{u_1, \cdots, u_{64}\}\big{\}}.\\
        &\rep(\{u_{4i-3},\cdots, u_{4i}\})=u_{1,i},\,1\leq i\leq 16;\\
        &\rep(\{u_{16i-15}, \cdots,u_{16i}\})=u_{2,i},\,1\leq i\leq 4;\\ 
        &\rep(\{u_1, \cdots, u_{64}\})=u_{3,1}.\\
        &\child(\{u_1,\cdots,u_{64}\})=\\
        &\quad \big{\{}\{u_{16i-15}, \cdots,u_{16i}\}\big{\}}_{1\leq i\leq 4};\\
        &\textup{For}\,1\leq i\leq 4,\,\child(\{u_{16i-15}, \cdots,u_{16i}\})=\\
        &\quad\big{\{}\{u_{16i+4j-19},\cdots,u_{16i+4j-16}\}\big{\}}_{1\leq j\leq 4};\\
        &\textup{For}\,1\leq i\leq 16,\,\child(\{u_{4i-3},\cdots, u_{4i}\})=\\
        &\quad\big{\{}\{u_{4i-3}\},\cdots,\{u_{4i}\}\big{\}}.\\[3mm]
        \end{aligned} 
        \end{equation*}
        
    \end{minipage}

    \caption{
    An example of the multiscale scheme for a discrete set on
    an $8\times8$ grid. Here, $u_{\ell,j}$ denotes the representative
    point of the $j$-th cell at level $\ell$
    }   
    \label{multiscalex} 
\end{figure}

\subsection{A Primal--Dual Interior-Point Relaxation Method}\label{interior-point relaxation method}
In this subsection, we develop IPRM to solve each OT subproblem.
Since every level corresponds to a standard OT problem, we suppress the level index $\ell$ in level-dependent notation throughout this subsection. For example, we write $m$ and $n$ for $m_\ell$ and $n_\ell$, respectively. Recall Proposition~\ref{rankA} and the problem decomposition in~\eqref{subproblem}. We assume that $A_\N$ has full row rank and that $2\leq m\leq n$. Then $|\N|\ge m+n-1$ and the system $A_\N x=0,\,x\geq 0$ has only the zero solution.

By introducing a variable $z\in\mathbb{R}^{\cN}$, we construct an equivalent positive relaxation problem of the logarithmic-barrier subproblem for \eqf{pp}:
\begin{equation*}
\min\,\,c_\N^\top x-\mu\sum_{j=1}^{|\N|}\textup{ln}\,z_j\quad \textup{s.t.}\,\,A_\N x=b,\,z-x=0,\,x\in\mathbb{R}^{\cN},\,z\in \mathbb{R}^{\cN}_{++},
\end{equation*}
where $\mu>0$ is the barrier parameter. To handle the newly introduced constraint $z-x=0$, we consider the augmented Lagrangian subproblem:
		\begin{equation*}
	\min \,\,F_{(\mu,\rho)}(x,s,z):=c_\N^\top x-\mu\sum_{j=1}^{|\N|}\textup{ln}\,z_j+s^\top (z-x)+\frac{\rho}{2}\|z-x\|^2
	\quad\textup{s.t.}\, A_\N x=b,
		\end{equation*}
		where $s\in\mathbb{R}^{\cN}$ is the Lagrange multiplier for $z-x=0$ and $\rho>0$ is the penalty parameter. The function $F_{(\mu,\rho)}(x,s,z)$ is known as
the BAL function. 

For fixed $x$ and $s$, the first-order optimality condition with
respect to $z_j$ is $\frac{\partial F}{\partial z_j}=-\mu\frac{1}{z_j}+s_j+\rho(z_j-x_j)=0$. Since $z_j$ is required to be positive for all $1\leq j\leq\cN$, we get the following explicit form for $z_j$:
\begin{equation}\label{definez}
z_j(x_j,s_j;\mu,\rho)=\frac{1}{2}\big{(}\sqrt{(s_j/\rho- x_j)^2+4\mu/\rho}-(s_j/\rho-x_j)\big{)},\,1\leq j\leq \cN.
\end{equation}
We also define $y_j$ by
\begin{equation}\label{definey}
y_j(x_j,s_j;\mu,\rho)=\frac{1}{2}\big{(}\sqrt{(s_j/\rho- x_j)^2+4\mu/\rho}+(s_j/\rho-x_j)\big{)},\,1\leq j\leq \cN.
\end{equation}	
For notational simplicity, we write
$z(x,s;\mu,\rho)$ and $y(x,s;\mu,\rho)$ simply as $z$ and $y$,
respectively. The following properties can be found in \cite[Lemmas 2.1 and 2.2]{liu2022primal}.
\begin{lemma}\label{gradient}
For $\mu\ge0$ and $\rho>0$, let $z_j$ and $y_j$ be defined by \eqref{definez} and \eqref{definey}. Then, for all
$1\leq j\leq|\N|$, the following statements hold.
\begin{enumerate}[label=(\roman*)]
 \item If $\mu>0$, then $z_j$ and $y_j$ are differentiable with respect to both $x_j$ and $s_j$, and 
\begin{equation}\label{partialxs}
\begin{aligned}
\frac{\partial z_j}{\partial x_j}=-\rho\frac{\partial z_j}{\partial s_j}=\frac{z_j}{z_j+y_j},\quad \frac{\partial y_j}{\partial x_j}=-\rho\frac{\partial y_j}{\partial s_j}=-\frac{y_j}{z_j+y_j}.
\end{aligned}
\end{equation}
\item If $\mu>0$, then $z_j$ and $y_j$ are differentiable with respect to $\mu$, and
\begin{equation}\label{partialmu}
   \frac{\partial z_j}{\partial\mu}=
\frac{\partial y_j}{\partial \mu}=\frac{1}{\rho}\frac{1}{z_j+y_j}.
\end{equation}
\item If $\mu>0$, then $z_j$ and $y_j$ are differentiable with respect to $\rho$, and 
\begin{equation}\label{partial2rho}
   \frac{\partial (z_j-x_j)^2}{\partial\rho}=-\frac{2}{\rho}\frac{z_j}{z_j+y_j}(z_j-x_j)^2\le0;
\end{equation}

\item If $\mu=0$, then
\begin{equation*}
    z_j(x_j,s_j;0,\rho)=\max\{0,x_j-s_j/\rho\},\quad y_j(x_j,s_j;0,\rho)=\max\{0,s_j/\rho-x_j\}.
\end{equation*}
Moreover, for every $\mu\ge0$, 
\begin{equation}\label{z=x}
    z_j(x_j,s_j;\mu,\rho)-x_j=0\,\, \iff\,\, x_j\ge0,\,s_j\ge0,\,x_js_j=\mu.
\end{equation}
\end{enumerate}
\end{lemma}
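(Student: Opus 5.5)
The plan is to work coordinatewise with the scalar quantities $a:=s_j/\rho-x_j$ and $r:=\sqrt{a^2+4\mu/\rho}$. Then $z_j=\tfrac12(r-a)$ and $y_j=\tfrac12(r+a)$. Three algebraic identities drive everything:
\[
z_j+y_j=r,\qquad y_j-z_j=a,\qquad z_jy_j=\tfrac14(r^2-a^2)=\mu/\rho .
\]
For $\mu>0$ we have $r>|a|\ge 0$, so $z_j,y_j>0$ and $r$ is a smooth function of $(x_j,s_j,\mu,\rho)$. This gives differentiability in (i)--(iii).

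For (i), compute $\partial r/\partial a=a/r$ and $\partial a/\partial x_j=-1$. This yields
\[
\frac{\partial z_j}{\partial x_j}=\tfrac12\Bigl(1-\frac{a}{r}\Bigr)=\frac{r-a}{2r}=\frac{z_j}{z_j+y_j},
\]
and similarly $\partial y_j/\partial x_j=-(r+a)/(2r)=-y_j/(z_j+y_j)$. Since $\partial a/\partial s_j=1/\rho=-\rho^{-1}\,\partial a/\partial x_j$, the identities relating the $s_j$-derivatives follow from the chain rule.

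For (ii), only $r$ depends on $\mu$, with $\partial r/\partial\mu=2/(\rho r)$. Hence both $z_j$ and $y_j$ have $\mu$-derivative $1/(\rho r)$.

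For (iii), I will avoid differentiating $r$ in $\rho$ directly. Instead, use $z_j-x_j=-s_j/\rho+\mu/(\rho z_j)$, which comes from the optimality relation $-\mu/z_j+s_j+\rho(z_j-x_j)=0$. Alternatively, differentiate the implicit equation $\rho z_j^2+(s_j-\rho x_j)z_j-\mu=0$ in $\rho$:
\[
\partial_\rho z_j\,(2\rho z_j+s_j-\rho x_j)=-(z_j^2-x_jz_j)=-z_j(z_j-x_j).
\]
The coefficient on the left equals $\rho(2z_j+a)=\rho(z_j+y_j)$. Therefore $\partial_\rho z_j=-z_j(z_j-x_j)/(\rho(z_j+y_j))$, and multiplying by $2(z_j-x_j)$ gives the formula in (iii). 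Its sign is nonpositive since $z_j,y_j>0$.

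For (iv), set $\mu=0$, so that $r=|a|$. Then $z_j=\tfrac12(|a|-a)=\max\{0,-a\}=\max\{0,x_j-s_j/\rho\}$, and similarly $y_j=\max\{0,a\}$.

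For the equivalence \eqref{z=x}, first suppose $z_j=x_j$. Then $x_j=z_j\ge0$, and $y_j=z_j+a=s_j/\rho\ge0$, so $s_j\ge0$. The product identity $z_jy_j=\mu/\rho$ then gives $x_js_j=\mu$; at $\mu=0$ this product identity still holds because $\max\{0,-a\}\max\{0,a\}=0$. Conversely, suppose $x_j,s_j\ge0$ and $x_js_j=\mu$. Then
\[
a^2+4\mu/\rho=(s_j/\rho-x_j)^2+4x_js_j/\rho=(s_j/\rho+x_j)^2 ,
\]
so $r=s_j/\rho+x_j$ and $z_j=\tfrac12(r-a)=x_j$.

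I do not expect a real obstacle here. The only delicate point is the $\rho$-derivative in (iii), and the implicit-differentiation route above avoids the messy direct computation.
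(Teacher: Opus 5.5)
Your proof is correct and complete. The three identities $z_j+y_j=r$, $y_j-z_j=a$ and $z_jy_j=\mu/\rho$ give (i), (ii) and the equivalence in (iv) by direct computation. Implicit differentiation of $\rho z_j^2+(s_j-\rho x_j)z_j-\mu=0$ is legitimate, since you have already shown that $z_j$ is smooth in $\rho$, and it gives (iii) with the coefficient $\rho(z_j+y_j)$.

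The paper offers no argument of its own here; it imports the lemma from Lemmas 2.1 and 2.2 of \cite{liu2022primal}. Your write-up is therefore a self-contained verification of a cited fact rather than a competing route. One cosmetic point: $z_jy_j=\mu/\rho$ follows from $r^2-a^2=4\mu/\rho$ for every $\mu\ge 0$, so your separate check at $\mu=0$ is unnecessary.
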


Parts (i) and (iv) of Lemma \ref{gradient} tell us that, when $\mu>0$,
the residual $z(x,s;\mu,\rho)-x$ provides a smooth relaxation of the
nonnegativity and complementarity conditions. When $\mu=0$, the
equation $z-x=0$ recovers these conditions exactly.

Let $\omega=(x,\lambda,s)$ and $\Delta \omega=(\Delta x,\Delta\lambda,\Delta s)$. We define
\begin{equation}\label{definexi}
\xi_{(\mu,\rho)}(\omega):=z-x\quad\text{and}\quad \psi_{(\mu,\rho)}(\omega):=\begin{pmatrix}
c_\N-A_\N^\top\lambda-s\\	
A_\N x-b\\
\xi_{(\mu,\rho)}(\omega)
			\end{pmatrix}.
\end{equation}
Together with \eqref{z=x}, Theorem~2.5 of \cite{liu2022primal} reformulates the KKT conditions for the primal--dual pair \eqf{pp} and \eqf{dp} as
\begin{equation}\label{newton}
\Psi_{(\rho,\gamma)}(\mu,\omega):=\begin{pmatrix}
\mu+\gamma\phi_{(\mu,\rho)}(\omega)\\	
\psi_{(\mu,\rho)}(\omega)
			\end{pmatrix}=0,
\end{equation}
where $\mu\geq 0$, and $\gamma\in(0,1]$ is the balancing parameter. The function $\phi_{(\mu,\rho)}(\omega):=\frac{1}{2}\|\xi_{(\mu,\rho)}(\omega)\|^2$ serves as a merit function.

The feasible set $\F$ is defined as
\begin{equation}\label{defineF}
\begin{aligned}
    \F:=\{(x,\lambda,s)\;|\; A_\N x=b,\, A_\N ^\top\lambda+s=c_\N\}.
\end{aligned}
\end{equation}
The set $\F$ contains only the primal and dual equality
constraints, because nonnegativity and complementarity are represented
by the relaxed equation $z-x=0$.
Unlike standard IPMs, IPRM does not require the iterates $x^{(k)}$ and $ s^{(k)} $ to be positive. We choose an initial point $\omega^{(0)}\in\F$ using either \eqref{start1} or \eqref{start2}, as appropriate.

Starting from $\omega^{(0)}\in\F$, we apply Newton's method to the system~\eqref{newton}. As shown below, the Newton search
direction satisfies
$A_\N\Delta x=0$ and
$A_\N^\top\Delta\lambda+\Delta s=0$ at each iteration. Consequently, all subsequent iterates remain in $\F$. 
Using the derivative formulas in Lemma~\ref{gradient}, we obtain
the following linear system for the search direction. For brevity, we omit the iteration superscript $(k)$:
\begin{equation}\label{linearsystem}
\begin{aligned}
\Delta\mu
&=-\mu+\gamma\phi_{(\mu,\rho)}(\omega),\\
\begin{pmatrix}
0&A_\N^\top&I_{\cN}\\
-A_\N&0&0\\
(Z+Y)^{-1} Y&0&\frac{1}{\rho}(Z+Y)^{-1}Z
\end{pmatrix}
\begin{pmatrix}
\Delta x\\
\Delta\lambda\\
\Delta s
\end{pmatrix}
&=
\begin{pmatrix}
0\\
0\\
\xi+\frac{\Delta\mu}{\rho}(Z+Y)^{-1}\mathbf{1}_{|\N|}
\end{pmatrix}.
\end{aligned}
\end{equation}
Here $Y=\diag(y)$ and $Z=\diag(z)$. We let $H_{(\mu,\rho)}(\omega)$ denote the coefficient
matrix.

To obtain the Newton direction, define $\widetilde D=\mat_\N(z\circ z)\in\mathbb{R}^{m\times n}$
and let $D\in\mathbb{R}^{m\times(n-1)}$ consist of the first $n-1$ columns of $\widetilde D$. We then compute $U=\diag(\widetilde D\mathbf{1}_n)$, $\widetilde V=\diag(\widetilde D^\top\mathbf{1}_m)$ and $V=\diag(\widetilde V_{11},\cdots,\widetilde V_{n-1,n-1})$. 
As shown in \cite{liu2022primal}, the system~\eqref{linearsystem} can be reduced to the normal equation
\begin{equation}\label{coefficient2} 
A_\N Z^2A^\top_\N\Delta\lambda=\begin{pmatrix}
U& D\\			
D^\top& V\\
\end{pmatrix}\begin{pmatrix}
\Delta\lambda_1\\			
\Delta\lambda_2\\
\end{pmatrix}=\begin{pmatrix}
r_1\\			
r_2\\
\end{pmatrix},
\end{equation}
where $r=(r_1,r_2)=-A_\N(\mu I_{\cN}+\rho Z^2)\xi-\Delta\mu A_\N Z\mathbf{1}_{\cN},\,r_1\in\mathbb{R}^{m},\,r_2\in\mathbb{R}^{n-1}$, and $\Delta\lambda=(\Delta\lambda_1,\Delta\lambda_2),\,\Delta\lambda_1\in\mathbb{R}^{m},\,\Delta\lambda_2\in\mathbb{R}^{n-1}$. 
Since $\mu>0$, we have $z>0$. Because $\G_\N$ is connected, $U$, $V$ and $A_\N Z^2A^\top_\N$ are positive definite. Eliminating
$\Delta\lambda_2$ from \eqref{coefficient2} gives the Schur complement system
\begin{equation}\label{schur}
  (U-DV^{-1}D^\top)\Delta\lambda_1=r_1-DV^{-1}r_2,\quad \Delta\lambda_2=V^{-1}(r_2-D^\top\Delta\lambda_1).
\end{equation}
Once $\Delta\lambda$ has been computed, $\Delta s$ and $\Delta x$ can be
recovered explicitly; see Section~3 of \cite{liu2022primal}.
Further properties of the SCM in \eqref{schur} are established in Section~\ref{AnalysisofIPRM}.
The detailed IPRM for the OT subproblems is presented in
Algorithm~\ref{IPRM}. The well-definedness of the algorithm follows from
\cite[Lemmas 3.2 and 3.3]{liu2022primal}. In Algorithm~\ref{IPRM}, the parameters $\gamma_0$ and $\eta$ balance the values of
$\mu^{(k)}$ and $\phi^{(k)}$. If $\mu^{(k)}\leq\gamma_0\phi^{(k)}$, $\mu^{(k)}$ remains unchanged during the Newton step. If $\mu^{(k)}\ge\eta\phi^{(k)}$, Step~5 reduces $\mu^{(k)}$ to enforce $\mu^{(k)}<\eta\phi^{(k)}$. 
\begin{algorithm}[H]
\caption{A primal--dual interior-point relaxation method}
\label{IPRM}
\begin{algorithmic}[0]
\setlength{\itemsep}{0pt}
\setlength{\parsep}{0pt}
\setlength{\parskip}{0pt}

\Step{0}{Input $A_\N,\,b$ and $c_\N$. Choose $\omega^{(0)}\in\F$, $\mu^{(0)},\rho^{(0)}>0$, $\theta_0>0$, $\delta,\sigma\in(0,1)$, $\tau\in(0,0.5)$, $\varepsilon\in(0,\mu^{(0)})$, $\gamma_0\in(0,1/(1+\theta_0)^2)$ and $\eta\in(1,(1+\sqrt{{2\rho^{(0)}}\theta_0^2/{\cN}})^2)$. Compute $z^{(0)},y^{(0)}$ and $\phi_{(\mu^{(0)},\rho^{(0)})}(\omega^{(0)})$. Set $k=0$.}

\Step{1}{If $\max\{\mu^{(k)},\phi^{(k)}\}\leq \varepsilon$, terminate. Otherwise, set $\gamma=\min\{\gamma_0,\mu^{(k)}/\phi^{(k)}\}$.}

\Step{2}{Compute $\Delta\mu^{(k)}$ by \eqref{linearsystem}. Solve \eqref{schur} to obtain $\Delta\lambda^{(k)}$, then recover
$\Delta s^{(k)}$ and $\Delta x^{(k)}$.}

\Step{3}{Select the largest step size $\alpha^{(k)}\in\{1,\delta,\delta^2,\cdots\}$ such that
\begin{equation}\label{ineq}
\phi_{(\mu^{(k)}+\alpha^{(k)}\Delta\mu^{(k)},\rho^{(k)})}
(\omega^{(k)}+\alpha^{(k)}\Delta\omega^{(k)})
\leq (1-2\tau\alpha^{(k)})\phi^{(k)}.
\end{equation}

\,}

\Step{4}{Update $\mu^{(k+1)}=\mu^{(k)}+\alpha^{(k)}\Delta\mu^{(k)}$, $\omega^{(k+1)}=\omega^{(k)}+\alpha^{(k)}\Delta\omega^{(k)}$ and
$
\rho^{(k+1)}=
\max\big{\{}
\rho^{(k)},
\frac{\sigma\|s^{(k+1)}\|_{\infty}}
{\max\{\|x^{(k+1)}\|,1\}}
\big{\}}.
$
Compute $z^{(k+1)},y^{(k+1)}$ and $\phi^{(k+1)}$.}

\Step{5}{Set $l=0$, while $\mu^{(k+1)}/\eta^l\geq\max\{\epsilon,\eta\phi_{(\mu^{(k+1)}/\eta^l,\rho^{(k+1)})}
(\omega^{(k+1)})\}$, set $l=l+1$. Once this inner loop terminates, set
$\mu^{(k+1)}=\mu^{(k+1)}/\eta^l$ and recompute
$z^{(k+1)}$, $y^{(k+1)}$ and $\phi^{(k+1)}$.
Set $k=k+1$ and return to Step~1.
}
\end{algorithmic}
\end{algorithm}
\begin{lemma}\label{z1-z2}
Fix $\rho>0$ and $(x,s)$, denote $z(x,s;\mu,\rho)$ by $\hat z(\mu)$. Then for any $\hat\mu_1\ge\hat\mu_2\geq0$, one has 
\begin{equation}\label{zmu1mu2}
    \|\hat z(\hat\mu_1)-\hat z(\hat\mu_2)\|\leq\sqrt{\frac{|\N|}{\rho}}(\sqrt{\hat\mu_1}-\sqrt{\hat\mu_2}).
\end{equation}
\end{lemma}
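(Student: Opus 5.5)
The plan is to prove the bound componentwise and then sum. Fix an index $j$ and set $a_j:=s_j/\rho-x_j$. By \eqref{definez},
\[
\hat z_j(\mu)=\tfrac12\bigl(\sqrt{a_j^2+4\mu/\rho}-a_j\bigr),
\]
so $\hat z_j(\hat\mu_1)-\hat z_j(\hat\mu_2)=\tfrac12\bigl(\sqrt{a_j^2+4\hat\mu_1/\rho}-\sqrt{a_j^2+4\hat\mu_2/\rho}\bigr)\ge 0$. This formula is valid for all $\mu\ge0$, including $\mu=0$, where it agrees with part (iv) of Lemma~\ref{gradient}. Working with this explicit expression therefore avoids any differentiability issue at $\hat\mu_2=0$.

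The key scalar step is to show that $t\mapsto\sqrt{a^2+t}$ is $1/2$-Hölder with constant one on $[0,\infty)$. Precisely, for $t_1\ge t_2\ge0$ we claim
\[
\sqrt{a^2+t_1}-\sqrt{a^2+t_2}\le\sqrt{t_1}-\sqrt{t_2}.
\]
To prove this, rationalize both sides as $(t_1-t_2)/(\sqrt{a^2+t_1}+\sqrt{a^2+t_2})$ and $(t_1-t_2)/(\sqrt{t_1}+\sqrt{t_2})$, respectively. The first denominator is at least the second, which gives the claim; the case $t_1=t_2$ is trivial. Applying this with $t_i=4\hat\mu_i/\rho$ yields
\[
0\le\hat z_j(\hat\mu_1)-\hat z_j(\hat\mu_2)\le\tfrac12\cdot\tfrac{2}{\sqrt\rho}\bigl(\sqrt{\hat\mu_1}-\sqrt{\hat\mu_2}\bigr)=\tfrac{1}{\sqrt\rho}\bigl(\sqrt{\hat\mu_1}-\sqrt{\hat\mu_2}\bigr).
\]

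Finally, square this bound, sum over $j=1,\dots,|\N|$, and take the square root. This gives $\|\hat z(\hat\mu_1)-\hat z(\hat\mu_2)\|\le\sqrt{|\N|/\rho}\,(\sqrt{\hat\mu_1}-\sqrt{\hat\mu_2})$, which is \eqref{zmu1mu2}. As an alternative to the scalar step, one could integrate \eqref{partialmu}: since $z_j+y_j=\sqrt{a_j^2+4\mu/\rho}\ge2\sqrt{\mu/\rho}$, we have $\partial z_j/\partial\mu\le 1/(2\sqrt{\rho\mu})$, and integrating from $\hat\mu_2$ to $\hat\mu_1$ gives the same bound. That route, however, needs a limiting argument at $\hat\mu_2=0$. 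The only mildly delicate point overall is the endpoint $\hat\mu_2=0$, and the rationalization argument handles it directly.
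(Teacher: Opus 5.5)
Your proof is correct and follows essentially the same route as the paper: rationalize the componentwise difference of square roots, bound the denominator below by $\sqrt{4\hat\mu_1/\rho}+\sqrt{4\hat\mu_2/\rho}$, and sum the squared bounds over $j$. Your separate treatment of the case $t_1=t_2$ also avoids the $0/0$ that the paper's displayed chain formally runs into when $\hat\mu_1=\hat\mu_2=0$.
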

\begin{proof}We have
\begin{equation*}
\begin{split}
      \|\hat z(\hat\mu_1)-\hat z(\hat\mu_2)\|^2&=\sum_{j=1}^{|\N|}\frac{1}{4}\left|\sqrt{({s_j}/{\rho}-x_j)^2+4{\hat\mu_1}/{\rho}}-\sqrt{({s_j}/{\rho}-x_j)^2+4{\hat\mu_2}/{\rho}}\right|^2\\
      &=\sum_{j=1}^{|\N|}\frac{1}{4}\left|\frac{4(\hat\mu_1-\hat\mu_2)/\rho}{\sqrt{({s_j}/{\rho}-x_j)^2+4{\hat\mu_1}/{\rho}}+\sqrt{({s_j}/{\rho}-x_j)^2+4{\hat\mu_2}/{\rho}}}\right|^2\\
      &\leq\sum_{j=1}^{\cN}\Big{(}\frac{(\hat\mu_1-\hat\mu_2)/\rho}{\sqrt{{\hat\mu_1}/{\rho}}+\sqrt{{\hat\mu_2}/{\rho}}}\Big{)}^2\\
      &= \frac{|
      \N|}{\rho}(\sqrt{\hat\mu_1}-\sqrt{\hat\mu_2})^2.
\end{split}
\end{equation*}
The desired inequality follows directly. \qed
\end{proof}

We next interpret the relaxation residual
$\xi_{(\mu,\rho)}(\omega)=z(x,s;\mu,\rho)-x$ in terms of
perturbations of the original primal--dual pair
\eqf{pp} and \eqf{dp}. Given $\rho>0$ and
$\xi\in\mathbb{R}^{|\N|}$, consider the following perturbed
primal--dual pair:
\begin{equation*}
\begin{tabular*}{\textwidth}{@{\extracolsep{\fill}} l r @{}}
$\displaystyle \,\,\,\,\min\,\, (c_\N+\rho\xi)^\top x
\quad \,\,{\rm s.t.}\,\,A_\N x = b+A_\N \xi,\, x\in \mathbb{R}^{|\N|}_+$,
&
$\hypertarget{perpp}{(\mathbf{P}_{(\rho,\xi)})}$
\\
$\displaystyle \,\,\,\,\max\,\,  (b+A_\N\xi)^\top \lambda
\quad{\rm s.t.}\,\,A_\N^\top\lambda+s =c_\N+\rho\xi,\,
{\lambda\in \mathbb{R}^{m+n-1},\,s\in\mathbb{R}^{|\N|}_+}$.
&
$(\hypertarget{perdp}{\mathbf{D}_{(\rho,\xi)})}$
\end{tabular*}
\end{equation*}

The following lemma shows that the iterates generated by IPRM can be
viewed as central-path points of a sequence of perturbed primal--dual pairs. This provides the basis for the sensitivity and global
convergence analyses developed below.

\begin{lemma}\label{central}
For $(x,\lambda,s)\in\F$, $\mu\geq0$, and $\rho>0$, define $z$ and
$y$ by \eqref{definez} and \eqref{definey}, respectively, and set
$\xi=z-x$ as defined in \eqref{definexi}. Then $(z,\lambda,\rho y)$ satisfies
\begin{equation}\label{cent}
    A_\N z=b+A_\N\xi,\,
      A_\N^\top\lambda+\rho y=c_\N+\rho\xi,\, z\ge0,\, \rho y\ge0,\, z\circ(\rho y)=\mu\bm 1_{\cN}.
\end{equation}
In particular, if $\mu>0$, then $z>0$ and $\rho y>0$, so that
$(z,\lambda,\rho y)$ lies on the central path associated with~\eqf{perpp} and~\eqf{perdp}. If
$\mu=0$, then $(z,\lambda,\rho y)$ is a KKT triple for~\eqf{perpp} and~\eqf{perdp}.
\end{lemma}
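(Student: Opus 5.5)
The plan is a direct verification from the definitions \eqref{definez}, \eqref{definey} and membership in $\F$. First I establish the componentwise identities for $z$ and $y$. Write $t_j:=s_j/\rho-x_j$ and $r_j:=\sqrt{t_j^2+4\mu/\rho}$. Then $z_j=\tfrac12(r_j-t_j)$ and $y_j=\tfrac12(r_j+t_j)$. From these I read off three facts:
\begin{itemize}
\item $y_j-z_j=t_j=s_j/\rho-x_j$;
\item $z_jy_j=\tfrac14(r_j^2-t_j^2)=\mu/\rho$;
\item $r_j\ge|t_j|$, so $z_j,y_j\ge0$, with strict inequality when $\mu>0$ because then $r_j>|t_j|$.
\end{itemize}
These are the only computations needed, and they cover the case $\mu=0$ directly, since then $r_j=|t_j|$ and the formulas reduce to the expressions in Lemma~\ref{gradient}(iv).

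Next I verify the equalities in \eqref{cent}. For the primal equation, $(x,\lambda,s)\in\F$ gives $A_\N x=b$. Hence $A_\N z=A_\N x+A_\N(z-x)=b+A_\N\xi$.

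For the dual equation, the first identity gives $\rho y=\rho z+s-\rho x=s+\rho\xi$. Combined with $A_\N^\top\lambda+s=c_\N$ from $\F$, this yields $A_\N^\top\lambda+\rho y=c_\N+\rho\xi$.

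Complementarity follows from the second identity: $z\circ(\rho y)=\rho(\mu/\rho)\mathbf 1_{\cN}=\mu\mathbf 1_{\cN}$. Nonnegativity of $z$ and of $\rho y$ follows from the third fact together with $\rho>0$.

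Finally I interpret the result. When $\mu>0$, strict positivity of $z$ and $\rho y$ together with the four relations in \eqref{cent} are exactly the perturbed central-path equations for \eqf{perpp}--\eqf{perdp} at parameter $\mu$. Here the primal right-hand side is $b+A_\N\xi$ and the dual cost is $c_\N+\rho\xi$, with $z$ as the primal point and $(\lambda,\rho y)$ as the dual point. When $\mu=0$, the same relations reduce to primal feasibility, dual feasibility and $z\circ(\rho y)=0$, which are the KKT conditions for the perturbed LP pair.

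There is no real obstacle in this argument. The one point needing care is to keep $\xi$ fixed as the perturbation data, treating it as a constant rather than as a function of the variables, so that \eqf{perpp} and \eqf{perdp} form a genuine LP pair.
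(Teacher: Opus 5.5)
Your proposal is correct and is essentially the paper's own argument. Both combine the equality constraints defining the feasible set \eqref{defineF} with the identity $\xi=z-x=y-s/\rho$ to obtain the two perturbed equations, and read off $z\ge 0$, $\rho y\ge 0$ and $z\circ(\rho y)=\mu\mathbf{1}$ directly from \eqref{definez}--\eqref{definey}; you simply make explicit the elementary facts $z_jy_j=\mu/\rho$ and $r_j\ge |t_j|$ (strict when $\mu>0$), which the paper leaves to the reader.
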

\begin{proof}
Since $(x,\lambda,s)\in\F$ and $\xi=z-x=y-s/\rho$, we obtain
$ A_\N z=b+A_\N\xi$ and $
      A_\N^\top\lambda+\rho y=c_\N+\rho\xi$. By the definitions of $z$ and $y$, we have $z\ge0,\, \rho y\ge0$ and $z\circ(\rho y)=\mu\bm 1_{\cN}$. Thus, all the relations in \eqref{cent} hold.\qed
\end{proof}

\subsection{Support Identification}\label{supportiden}
In this subsection, we incorporate IPRM into the multiscale framework to develop MSIPRM for OT problems. To address the main issues raised at the end of Section~\ref{updateN}, the multiscale framework is used to select the initial active set $\N^{[0]}$, whereas IPRM solves the resulting sparse subproblems~\eqref{sparse}. A remaining challenge is to reliably identify the optimal support from the approximate solutions generated by IPRM, since solving every subproblem exactly would be impractical. 

To justify support identification from inexact IPRM solutions, we
analyze the stability of the optimal support under perturbations. For this purpose, we consider the perturbed primal--dual pair \eqf{perpp} and \eqf{perdp}. We assume that each primal subproblem~\eqf{pp} admits a unique optimal solution satisfying the primal constraint nondegeneracy condition. In view of Theorem~\ref{zeromeasure} and \cite[Lemma~1]{ge2025interior}, this assumption is justified within our multiscale framework. Moreover, by \cite[Theorem~4.5]{sierksma2001linear}, this assumption is equivalent to the primal and dual constraint nondegeneracy conditions holding at a KKT triple for \eqf{pp} and \eqf{dp}.

Recall that $\bar A$ is totally unimodular; that is, the determinant
of every square submatrix of $\bar A$ lies in $\{-1,0,1\}$. Consequently, $\bar A_\N$ and $A_\N$ are also totally unimodular,
and so are all of their submatrices; see~\cite[Section~19.3]{schrijver1998theory}.
This structure, together with the constraint nondegeneracy assumptions, allows us to control how the perturbation affects the optimal support. In particular, Lemma~\ref{samespt} shows that, whenever the perturbation
vector $\xi$ is sufficiently small, the perturbed primal problem~\eqf{perpp} has an optimal solution
whose support coincides with that of the optimal solution of~\eqf{pp}. In contrast to the result of \cite[Theorem~3]{cartis2016active}, our conclusion does not require $\xi$ to be positive. The proof of Lemma \ref{samespt} is provided in Appendix~\ref{proof:lemma-e}.

\begin{lemma}\label{samespt}
Let $\rho>0$, and $(x^*,\lambda^*,s^*)$ be a KKT triple for the primal--dual pair
\eqf{pp} and \eqf{dp}. Suppose that the primal and dual constraint
nondegeneracy conditions hold at $(x^*,\lambda^*,s^*)$. If the perturbation vector $\xi$ satisfies
\begin{equation}\label{uppernormxi}
    \|\xi\|<\frac{\min\left\{\min_{j\in \spt(x^*)}x^*_j,\min_{j\in\spt(s^*)}s^*_j/\rho\right\}}{\sqrt{1+4(m+n-1)(|\N|-(m+n-1))}},
\end{equation}
then there exists a KKT triple $(\hat x,\hat\lambda,\hat s)$ for the perturbed primal--dual pair \eqf{perpp} and \eqf{perdp} such that $\spt(\hat x)=\spt(x^*)$ and $\spt(\hat s)=\spt(s^*)$.
\end{lemma}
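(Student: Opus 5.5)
The plan is to show that the constraint nondegeneracy assumptions force $(x^*,\lambda^*,s^*)$ to be a strictly complementary, nondegenerate basic solution. The perturbed KKT triple is then built explicitly from the same basis, and the required support preservation follows from total unimodularity entrywise bounds.

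\textbf{Step 1 (structure of the KKT triple).} Set $B:=\spt(x^*)$, $S:=\spt(s^*)$ and $F:=\N\setminus S$. Complementarity $x^*\circ s^*=0$ gives $B\subseteq F$. Primal nondegeneracy says $\rank(A_B)=m+n-1$, so $A_F$ has full row rank $m+n-1$. Dual nondegeneracy says $\rank(A_F)=|F|$, so $A_F$ has full column rank. Hence $A_F$ is square and nonsingular, $|F|=m+n-1$, and in fact $B=F$: the columns of $A_B$ already span $\mathbb{R}^{m+n-1}$, so $|B|\ge m+n-1=|F|$. Consequently $\{B,S\}$ is a partition of $\{1,\dots,|\N|\}$ (strict complementarity), $A_B$ is a nonsingular basis, and $|S|=|\N|-(m+n-1)=:q$. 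Write $p:=m+n-1$.

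\textbf{Step 2 (candidate triple).} I would keep the same basis and define
\[
\hat x_B=A_B^{-1}(b+A_\N\xi)=x^*_B+\xi_B+A_B^{-1}A_S\xi_S,\qquad \hat x_S=0,
\]
together with
\[
\hat\lambda=A_B^{-\top}(c_B+\rho\xi_B),\qquad \hat s=c_\N+\rho\xi-A_\N^\top\hat\lambda .
\]
Here I use $x^*_B=A_B^{-1}b$ and $\lambda^*=A_B^{-\top}c_B$, which hold because $s^*_B=0$. By construction $A_\N\hat x=b+A_\N\xi$, the dual equality constraint holds, $\hat s_B=0$, and
\[
\hat s_S=s^*_S+\rho\bigl(\xi_S-A_S^\top A_B^{-\top}\xi_B\bigr).
\]
Since $\hat x_S=0$ and $\hat s_B=0$, complementarity holds automatically. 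It remains to show $\hat x_B>0$ and $\hat s_S>0$. This yields a KKT triple for \eqf{perpp}–\eqf{perdp} with $\spt(\hat x)=B$ and $\spt(\hat s)=S$.

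\textbf{Step 3 (bounds via total unimodularity; the main point).} Because $A_\N$ is totally unimodular and $A_B$ is nonsingular, Cramer's rule shows every entry of $M:=A_B^{-1}A_S\in\mathbb{R}^{p\times q}$ lies in $\{-1,0,1\}$. Then for each $i\in B$, by Cauchy--Schwarz,
\[
|\xi_i+(M\xi_S)_i|\le |\xi_i|+\sum_{j\in S}|\xi_j|\le\sqrt{1+q}\,\|\xi\|.
\]
Similarly, for each $j\in S$,
\[
|\xi_j-(M^\top\xi_B)_j|\le\sqrt{1+p}\,\|\xi\|.
\]
When $q\ge1$ we have $p,q\ge1$, so $\max\{1+p,1+q\}\le1+4pq$; if $q=0$, the set $S$ is empty and the dual bound is vacuous. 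Hypothesis \eqref{uppernormxi} therefore gives two strict inequalities:
\[
|\hat x_i-x^*_i|<\min_{j\in B}x^*_j\quad\text{for } i\in B,\qquad |\hat s_j-s^*_j|/\rho<\min_{j\in S}s^*_j/\rho\quad\text{for } j\in S.
\]
Hence $\hat x_B>0$ and $\hat s_S>0$.

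The step I regard as the crux is Step 1. Extracting $B=\N\setminus S$ from the two rank conditions is what makes a single basis serve both problems simultaneously. The second essential ingredient is the entrywise bound on $A_B^{-1}A_S$ from total unimodularity, which produces a dimension-only constant independent of the data. The constant in \eqref{uppernormxi} is looser than the one obtained here, so the argument has slack to spare.
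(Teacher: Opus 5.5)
Your proposal is correct and follows the paper's proof: the same basis (the support of $x^*$), the same explicit perturbed triple $(\hat x,\hat\lambda,\hat s)$, and positivity of $\hat x_B$ and $\hat s_S$ obtained from a total-unimodularity bound on $A_B^{-1}A_S$. The differences are refinements. You derive strict complementarity and $|B|=m+n-1$ from the two rank conditions, which the paper only asserts. Your Cramer's-rule observation shows that $A_B^{-1}A_S$ has entries in $\{0,\pm1\}$, whereas the paper uses $\{0,\pm1,\pm2\}$ and passes through the spectral and Frobenius norms. Combined with row-wise Cauchy--Schwarz, this gives the sharper constant $\sqrt{1+\max\{p,q\}}$ in place of $\sqrt{1+4pq}$.
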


\begin{corollary}\label{integer}
Assume that $\rho\ge 1$, $c_\N\ge 0$ and $\bar b>0$ are both integer vectors, and that $\cN=\chi_\N (m+n)$, where $\chi_\N\geq 2$. Suppose further that the primal and dual constraint nondegeneracy conditions hold at a KKT triple
$(x^*,\lambda^*,s^*)$ for~\eqf{pp} and~\eqf{dp}. Then the conclusion in Lemma \ref{samespt} holds if $\xi$ satisfies
 \[\|\xi\|<\frac{1}{2\rho\sqrt{\chi_\N-1}(m+n)}.\]
\end{corollary}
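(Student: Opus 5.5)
The plan is to verify that, under the integrality assumptions, the right-hand side of \eqref{uppernormxi} is bounded below by $\frac{1}{2\rho\sqrt{\chi_\N-1}(m+n)}$. Once this holds, any $\xi$ satisfying the stated bound also satisfies \eqref{uppernormxi}, and Lemma~\ref{samespt} applies directly. This splits into a lower bound on the numerator and an upper bound on the denominator.

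\textbf{Numerator.} We lower-bound the positive entries of $x^*$ and $s^*$ using total unimodularity. Under primal constraint nondegeneracy together with uniqueness (which is equivalent to dual nondegeneracy, via \cite[Theorem~4.5]{sierksma2001linear}), $x^*$ is the unique optimal solution and hence a vertex of the feasible polyhedron. Since $A_\N$ is totally unimodular and $b$ is integral, Cramer's rule on a basis submatrix $B$ with $\det B=\pm1$ gives that $x^*_B=B^{-1}b$ is integral. Therefore every positive $x^*_j$ satisfies $x^*_j\ge1$.

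For the dual, primal nondegeneracy gives uniqueness of $(\lambda^*,s^*)$. Then $\lambda^*$ is determined by $B^\top\lambda^*=c_B$ for the optimal basis, so $\lambda^*=B^{-\top}c_B$ is integral. It follows that $s^*=c_\N-A_\N^\top\lambda^*$ is integral, and every positive $s^*_j\ge1$. Consequently $s^*_j/\rho\ge1/\rho$. Because $\rho\ge1$, the numerator is at least $\min\{1,1/\rho\}=1/\rho$.

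One subtlety here is that the dual KKT triple might not be a basic one. However, uniqueness of the dual optimum under primal nondegeneracy resolves this: the unique $\lambda^*$ must coincide with the basic solution. This is the step requiring the most care, namely making sure the given KKT triple is the vertex pair.

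\textbf{Denominator.} Substituting $|\N|=\chi_\N(m+n)$, we get
\[
4(m+n-1)(|\N|-(m+n-1))\le 4(m+n)\bigl(\chi_\N(m+n)-(m+n)+1\bigr).
\]
More carefully, write $p=m+n\ge4$. Then $1+4(p-1)((\chi_\N-1)p+1)\le 4(\chi_\N-1)p^2$. This reduces to checking
\[
1+4(p-1)\le 4(\chi_\N-1)p^2-4(p-1)(\chi_\N-1)p=4(\chi_\N-1)p,
\]
that is, $4p-3\le4(\chi_\N-1)p$, which holds since $\chi_\N\ge2$. Hence the square root in the denominator is at most $2\sqrt{\chi_\N-1}\,(m+n)$.

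\textbf{Conclusion.} Combining the two bounds, the threshold in \eqref{uppernormxi} is at least
\[
\frac{1}{2\rho\sqrt{\chi_\N-1}\,(m+n)},
\]
so any $\xi$ satisfying the hypothesis of the corollary satisfies \eqref{uppernormxi}, and Lemma~\ref{samespt} yields the conclusion.
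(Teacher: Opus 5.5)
Your proposal is correct and follows essentially the same route as the paper: total unimodularity of $A_{\mathcal N}$, combined with primal and dual nondegeneracy (which make the support of $x^*$ a square nonsingular basis), forces the KKT triple to be integral, so the numerator in \eqref{uppernormxi} is at least $1/\rho$; the elementary inequality $1+4(m+n-1)(|\mathcal N|-(m+n-1))\le 4(\chi_{\mathcal N}-1)(m+n)^2$ then bounds the denominator. You simply spell out what the paper states in one line, namely the basis argument for the integrality of $\lambda^*$ and $s^*$ and the check $4p-3\le 4(\chi_{\mathcal N}-1)p$.
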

\begin{proof}
Since $\cN=\chi_\N (m+n)$ and $\chi_\N\geq 2$, we have
\begin{equation}\label{m+n-1}
    1+4(m+n-1)(|\N|-(m+n-1))\le4(\chi_\N-1)(m+n)^2.
\end{equation}
By the integrality of the data, primal and dual constraint nondegeneracy, and the total unimodularity of $A_\N$, the KKT triple
$(x^*,\lambda^*,s^*)$ is integral. Hence, 
\begin{equation}\label{minmin}
  \min_{j\in \spt(x^*)}x^*_j\ge 1\quad\textup{and}\quad\min_{j\in\spt(s^*)}s^*_j/\rho\geq {1}/{\rho}.
\end{equation}
Substituting \eqref{m+n-1} and \eqref{minmin} into \eqref{uppernormxi}
yields the desired conclusion.\qed
\end{proof}

Let $\{\mu^{(k)}\},\,\{\rho^{(k)}\},\,\{z^{(k)}\},\,\{y^{(k)}\}$, and $\{\omega^{(k)}\}$ be generated by Algorithm~\ref{IPRM}. Building on Lemma \ref{samespt}, we now derive a
support-identification criterion for these iterates. The proof of Theorem~\ref{uppermuk} is provided in Appendix~\ref{proof:theorem-d}.

\begin{theorem}\label{uppermuk}
Assume that $\{\rho^{(k)}\}$ is bounded and that there exists $k_1\in\mathbb{Z}_+$ such that $\gamma_0\phi^{(k)}<\mu^{(k)}<\eta\phi^{(k)}$ for all $k\ge k_1$. Suppose further that the primal and dual constraint nondegeneracy conditions hold at a KKT triple $(x^*,\lambda^*,s^*)$ for \eqf{pp} and~\eqf{dp}. If $k\ge k_1$ and $\mu^{(k)}$ satisfies
\begin{equation}\label{sqrtmu}
\sqrt{\mu^{(k)}}\leq\frac{\min\left\{\min_{j\in \spt(x^*)}x^*_j,\min_{j\in\spt(s^*)}s^*_j/\rho^{(k)}\right\}}{|\N|/\sqrt{\rho^{(k)}}+\sqrt{2(1+4(m+n-1)(|\N|-(m+n-1)))/\gamma_0}},
\end{equation}
then the following support identities hold:
\begin{equation}\label{partionp}
\begin{aligned}
\spt(x^*)&=\Big{\{}j\,\Big{|}\, z^{(k)}_j\ge \sqrt{\mu^{(k)}/\rho^{(k)}},\,1\leq j\leq |\N|\Big{\}},\\
\spt(s^*)&=\Big\{j\,\Big|\, z^{(k)}_j<\sqrt{\mu^{(k)}/\rho^{(k)}},\,1\leq j\leq |\N|\Big\}.
\end{aligned}
\end{equation}
\end{theorem}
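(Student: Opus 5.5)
Since $\omega^{(k)}\in\F$, Lemma~\ref{central} shows that $(z^{(k)},\lambda^{(k)},\rho^{(k)}y^{(k)})$ is the central-path point, with parameter $\mu^{(k)}$, of the perturbed pair \eqf{perpp}--\eqf{perdp} with $\rho=\rho^{(k)}$ and $\xi=\xi^{(k)}=z^{(k)}-x^{(k)}$. The plan is to compare this central-path point with the $\mu=0$ point $\hat z:=z(x^{(k)},s^{(k)};0,\rho^{(k)})$, which by Lemma~\ref{gradient}(iv) and Lemma~\ref{central} is a KKT point for a second perturbed pair with perturbation $\hat\xi:=\hat z-x^{(k)}$. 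Lemma~\ref{samespt} then transfers the support of $(x^*,s^*)$ to $(\hat z,\rho^{(k)}\hat y)$, and Lemma~\ref{z1-z2} bounds the distance from $z^{(k)}$ back to $\hat z$.

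The first step is to bound $\hat\xi$. Write $\kappa:=\sqrt{1+4(m+n-1)(|\N|-(m+n-1))}$ and $\Theta:=\min\{\min_{\spt(x^*)}x^*_j,\min_{\spt(s^*)}s^*_j/\rho^{(k)}\}$. For $k\ge k_1$ we have $\mu^{(k)}>\gamma_0\phi^{(k)}$, so $\|\xi^{(k)}\|=\sqrt{2\phi^{(k)}}<\sqrt{2\mu^{(k)}/\gamma_0}$. Lemma~\ref{z1-z2} with $\hat\mu_1=\mu^{(k)}$ and $\hat\mu_2=0$ gives $\|z^{(k)}-\hat z\|\le\sqrt{|\N|\mu^{(k)}/\rho^{(k)}}$. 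Hence
\[
\|\hat\xi\|\le\sqrt{\mu^{(k)}}\Bigl(\sqrt{|\N|/\rho^{(k)}}+\sqrt{2/\gamma_0}\Bigr),
\]
and under \eqref{sqrtmu} this gives $\|\hat\xi\|<\Theta/\kappa$. This uses $\sqrt{|\N|}\le|\N|$ and the fact that \eqref{sqrtmu} places $\kappa$ inside the square root next to $2/\gamma_0$, so $\kappa\sqrt{2/\gamma_0}\le\sqrt{2\kappa^2/\gamma_0}$ and $\kappa\sqrt{|\N|/\rho}$ is absorbed by the $|\N|/\sqrt{\rho}$ term. If the constants do not close, I will re-derive the tolerance from the final separation step below.

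Second, I apply Lemma~\ref{samespt} with perturbation $\hat\xi$ to obtain a KKT triple $(\tilde x,\tilde\lambda,\tilde s)$ of the $\hat\xi$-perturbed pair with $\spt(\tilde x)=\spt(x^*)$ and $\spt(\tilde s)=\spt(s^*)$. Nondegeneracy passes to the perturbed problem: primal nondegeneracy gives a unique dual solution, and $\spt(\tilde x)\cup\spt(\tilde s)$ is a partition, so strict complementarity holds. From this I want $\hat z=\tilde x$ on the relevant supports, or at least $\spt(\hat z)\subseteq\spt(x^*)$ and $\spt(\hat y)\subseteq\spt(s^*)$. The argument is that any KKT pair of the perturbed problem is complementary to $(\tilde x,\tilde s)$. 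Since $\hat z_j\hat y_j=0$, I get $\hat z_j=0$ for $j\in\spt(s^*)$ and $\hat y_j=0$ for $j\in\spt(x^*)$.

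Finally, I quantify: on $\spt(x^*)$ I need $\hat z_j\ge 2\sqrt{\mu/\rho}$, so that after the $z^{(k)}$ shift $z^{(k)}_j\ge\sqrt{\mu/\rho}$. On $\spt(s^*)$, $\hat z_j=0$ gives $z^{(k)}_j=\tfrac12\bigl(\sqrt{a^2+4\mu/\rho}-a\bigr)$ with $a=\hat y_j>0$, which is below $\sqrt{\mu/\rho}$ strictly. For the $\spt(x^*)$ side I will use the proof of Lemma~\ref{samespt}. It presumably constructs $\hat x$ by solving the basis system, so total unimodularity gives $|\hat x_j-x^*_j|\le\kappa\|\hat\xi\|$ on the support, which keeps $\hat z_j\ge x^*_j-\kappa\|\hat\xi\|$ comfortably above the threshold; the per-coordinate bound $z^{(k)}_j\ge\hat z_j$ also helps there. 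I expect this uniqueness-and-quantitative-positivity step to be the main obstacle, and I would tighten it by reusing the explicit bound from Appendix~\ref{proof:lemma-e}.
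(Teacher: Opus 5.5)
\textbf{There is a genuine gap in your first step, and it cannot be fixed by adjusting constants.} Write $M:=m+n-1$ and take $\kappa,\Theta$ as in your proposal.

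Hypothesis \eqref{sqrtmu} reads $\sqrt{\mu^{(k)}}\,(|\N|/\sqrt{\rho^{(k)}}+\kappa\sqrt{2/\gamma_0})\le\Theta$. To apply Lemma~\ref{samespt} to $\hat\xi$ using your estimate $\|\hat\xi\|\le\sqrt{\mu^{(k)}}\,(\sqrt{|\N|/\rho^{(k)}}+\sqrt{2/\gamma_0})$, you need instead $\kappa\sqrt{\mu^{(k)}}\,(\sqrt{|\N|/\rho^{(k)}}+\sqrt{2/\gamma_0})<\Theta$.

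Your claimed absorption of $\kappa\sqrt{|\N|/\rho^{(k)}}$ into $|\N|/\sqrt{\rho^{(k)}}$ requires $\kappa^2\le|\N|$. With $d:=|\N|-M\ge1$, however, $\kappa^2-|\N|=1+(4M-1)d-M\ge 3M>0$. So whenever $|\N|>m+n-1$, which is every nontrivial case, \eqref{sqrtmu} does not give $\|\hat\xi\|<\Theta/\kappa$. Lemma~\ref{samespt} therefore cannot be invoked for the $\hat\xi$-perturbed pair.

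``Re-deriving the tolerance'' would prove a different theorem: when $|\N|$ is a fixed multiple of $m+n$, it needs $\sqrt{\mu^{(k)}}$ smaller by a factor of order $\sqrt{m+n}$. The loss comes from moving $z^{(k)}$ to the $\mu=0$ point $\hat z$. That move costs up to $\sqrt{|\N|\mu^{(k)}/\rho^{(k)}}$ in norm, and Lemma~\ref{samespt} then multiplies this by $\kappa$.

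The rest of your argument is sound, and simpler than you expect:
\begin{itemize}
\item Both nondegeneracy conditions transfer to the perturbed pair, so $\hat z=\tilde x$ and $\rho^{(k)}\hat y=\tilde s$.
\item It is cross-complementarity with $(\tilde x,\tilde s)$, not $\hat z_j\hat y_j=0$, that kills $\hat z_j$ on $\spt(s^*)$.
\item Since $z_j-y_j=x_j-s_j/\rho$ and $z_jy_j=\mu/\rho$, one has $z^{(k)}_j\ge\sqrt{\mu^{(k)}/\rho^{(k)}}$ if and only if $\hat y_j=0$. So no quantitative lower bound on $\hat z_j$ is needed.
\end{itemize}
Your route thus proves \eqref{partionp} only under the stronger condition above, not under \eqref{sqrtmu}.

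\textbf{The missing idea} is to keep the true residual $\xi^{(k)}$ as the perturbation and to handle $\mu^{(k)}>0$ by a central-path estimate rather than by passing to $\mu=0$. The quantity $\|\xi^{(k)}\|<\sqrt{2\mu^{(k)}/\gamma_0}$ is exactly what \eqref{sqrtmu} multiplies by $\kappa$.

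Lemma~\ref{samespt} and its proof give a KKT triple $(\bar x,\bar\lambda,\bar s)$ of the $\xi^{(k)}$-perturbed pair with the supports of $(x^*,s^*)$. It satisfies $\bar x_j\ge\min_{i\in\spt(x^*)}x^*_i-\kappa\|\xi^{(k)}\|$ on $\spt(x^*)$ and $\bar s_j\ge\min_{i\in\spt(s^*)}s^*_i-\rho^{(k)}\kappa\|\xi^{(k)}\|$ on $\spt(s^*)$.

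By Lemma~\ref{central}, $(z^{(k)},\lambda^{(k)},\rho^{(k)}y^{(k)})$ lies on the central path of the same pair, so $(\bar x-z^{(k)})^\top(\bar s-\rho^{(k)}y^{(k)})=0$. Expanding this and using $\bar x\circ\bar s=0$ and $z^{(k)}_jy^{(k)}_j=\mu^{(k)}/\rho^{(k)}$ gives
\[
\sum_{j\in\spt(s^*)}\frac{\bar s_j}{\rho^{(k)}y^{(k)}_j}+\sum_{j\in\spt(x^*)}\frac{\bar x_j}{z^{(k)}_j}=|\N|.
\]
All $|\N|\ge 3$ terms are positive, so each is strictly less than $|\N|$. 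Hence $z^{(k)}_j>\bar x_j/|\N|$ on $\spt(x^*)$ and $y^{(k)}_j>\bar s_j/(\rho^{(k)}|\N|)$ on $\spt(s^*)$.

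The term $|\N|/\sqrt{\rho^{(k)}}$ in \eqref{sqrtmu} is precisely what makes both right-hand sides at least $\sqrt{\mu^{(k)}/\rho^{(k)}}$. The relation $z^{(k)}_j=\mu^{(k)}/(\rho^{(k)}y^{(k)}_j)$ then gives the strict inequality on $\spt(s^*)$ in \eqref{partionp}. This is where the factor $|\N|$, rather than your $\kappa\sqrt{|\N|}$, comes from.
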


\begin{remark}
The assumptions of Theorem~\ref{uppermuk} are supported by the
subsequent analysis. First, Theorem~\ref{infeasible} shows
that if $\rho^{(k)}\to\infty$, then the primal problem \eqf{pp} can be
infeasible. Second, Theorem~\ref{Hoffman} indicates that there exists $k_1\in\mathbb{Z}_+$ such that
$\gamma_0\phi^{(k)}<\mu^{(k)}<\eta\phi^{(k)}$ for all $k\ge k_1$.
Moreover, the global convergence result in Theorem~\ref{Hoffman} guarantees that $\mu^{(k)}\to 0$, ensuring that the condition in~\eqref{sqrtmu} is
satisfied for all sufficiently large $k$.   
\end{remark}

In practice, $m+n\gg1$, while $\chi_\N\geq2$ is typically of moderate size. Since $\{\rho^{(k)}\}$ is nondecreasing and bounded, there exists $\rho^*>0$ such that $\rho^{(k)}\to\rho^*$. If the assumptions of Corollary~\ref{integer} hold, then the upper bound in \eqref{sqrtmu} can be approximated by
\[\sqrt{\mu^{(k)}}\leq
\frac{1}{\chi_\N\sqrt{\rho^*}+\rho^*\sqrt{8(\chi_\N-1)/\gamma_0}}\frac{1}{m+n}.\]

Equation~\eqref{partionp} shows that $\spt(x^*)$ consists precisely of the indices associated with the relatively large components of
$z^{(k)}$. Under the primal and dual constraint nondegeneracy conditions, $|\spt(x^*)|=m+n-1$. 
These observations motivate constructing a candidate support for the
optimal transport plan by selecting the largest components of
$z^{(k)}$. Specifically, we choose a parameter $\chi>1$ and
define $q_\chi=\min\{\lceil\chi(m+n)\rceil,\cN\}$. Let $\pi^{(k)}$ be a permutation of $\{1,\cdots,|\N|\}$ such that
$z_{\pi^{(k)}(1)}^{(k)}
\ge z_{\pi^{(k)}(2)}^{(k)}
\ge\cdots\ge
z_{\pi^{(k)}(|\N|)}^{(k)}$. We define the relaxed support estimate by
\begin{equation}\label{relaxp}
\Omega^{(k)}(\chi)
:=\left\{(u_{i_{\pi^{(k)}(r)}},v_{j_{\pi^{(k)}(r)}})\in\N\,\Big{|}\,1\leq r\leq q_\chi\right\}.
\end{equation}
Thus, $\Omega^{(k)}(\chi)$ approximates the support of the optimal
transport plan at the current level and is used to construct the
initial active set at the next level. The complete MSIPRM is
summarized in Algorithm~\ref{MSIPRM}.

\begin{algorithm}[H]
\caption{A multiscale primal--dual interior-point relaxation method}
\label{MSIPRM}
\begin{algorithmic}[0]
\setlength{\itemsep}{0pt}
\setlength{\parsep}{0pt}
\setlength{\parskip}{0pt}

\Step{0}{
Construct two hierarchical partitions
$(\U_0,\cdots,\U_K)$ and
$(\V_0,\cdots,\V_K)$, and choose $\bar\chi>1$.
Compute the measures and cost functions by
\eqref{multimeasure} and \eqref{costk}, respectively.
}

\Step{1}{Solve the OT problem at level $K$ using IPRM with
$\N_K=\U_K\times\V_K$ as the active set,
and obtain $\mu_K$, $\rho_K$, $\omega_K$, and $z_K$.
Set $\ell= K-1$.
}

\Step{2}{
Choose $\chi\in(1,\bar{\chi})$ and let $q_\chi=\min\left\{\left\lceil\chi\bigl(|\U_{\ell+1}|+|\V_{\ell+1}|\bigr)\right\rceil,|\N_{\ell+1}|\right\}$. Using $z_{\ell+1}$, construct the relaxed support estimate
$\Omega_{\ell+1}(\chi)\subseteq\N_{\ell+1}$ according to~\eqref{relaxp}.
}

\Step{3}{
Construct the initial active set for level $\ell$ by
\[
\N^{[0]}_\ell\gets\bigcup_{(\bm{u}_{\ell+1,i_t},\bm{v}_{\ell+1,j_t})\in\Omega_{\ell+1}(\chi)}\child(\bm{u}_{\ell+1,i_t})\times\child(\bm{v}_{\ell+1,j_t}).
\]
}

\Step{4}{Apply Algorithm~\ref{update} at level $\ell$, using
$\N^{[0]}_\ell$ as the initial active set and IPRM as the LP solver.
Let $\mu_\ell$, $\rho_\ell$, $\omega_\ell$, and $z_\ell$
denote the returned quantities, and let $\N_\ell\subseteq\U_\ell\times\V_\ell$ denote the resulting active set.
}

\Step{5}{
If $\ell=0$, terminate. Otherwise, set $\ell=\ell-1$ and return
to Step~2.
}

\end{algorithmic}
\end{algorithm}

\subsection{Analysis of the Schur Complement System}\label{AnalysisofIPRM}
For large-scale OT problems, the matrix-vector multiplications $A_\N  x$ and $A_\N ^\top\lambda$, as well as the computation of the Newton direction, can be time-consuming.
The Schur complement formulation in \eqref{schur} reduces the cost of
computing the Newton direction by exploiting the structure of
$A_\N$. Since $|\N|$ is typically much smaller than $mn$, the matrix $D$ is highly sparse. 
The Schur complement $M:=U-DV^{-1}D^\top$ is often sparse in practice as well. Recall that $\mathcal{G}_{\mathcal{N}}$ is connected. Then
$A_{\mathcal{N}}Z^2A_{\mathcal{N}}^\top$, $U$, and $V$ are positive definite. It then follows from the Schur complement criterion that
$M$ is also positive definite.
Consequently, the symmetrically scaled matrix
$I_m-QQ^\top=U^{-\frac{1}{2}}MU^{-\frac{1}{2}}$ is also positive definite, where $Q:=U^{-\frac{1}{2}}DV^{-\frac{1}{2}}\in\mathbb{R}^{m\times(n-1)}$.
In practice, we solve the Schur complement system~\eqref{schur} either by applying
the PCG method with $U$ as a preconditioner or by directly computing a Cholesky factorization of $M$.

In numerical linear algebra, the condition number $\kappa(M)$ governs
the sensitivity of the computed Newton direction to numerical errors, whereas
 $\kappa(I_m-QQ^\top)$ influences the convergence rate of the PCG method. In this subsection, we provide 
upper bounds on both $\kappa(M)$ and $\kappa(I_m-QQ^\top)$. We further show that, if $\mu^{(k)}\to 0$ and $z^{(k)}$ converges to a solution $x^*$
satisfying the primal constraint nondegeneracy condition, then the condition numbers of these two matrices remain uniformly bounded as $k\to\infty$.
Consequently, the Schur complement
system~\eqref{schur} avoids the ill-conditioning that commonly arises in the Newton systems of IPMs; refer to \cite[Chapter 14]{nocedal2006numerical} for example.

\begin{proposition}
Let $\G_\N=(\U\cup\V,\N)$ be a connected bipartite graph with $2\leq m\leq n$, and let $\varrho_\N$ be the maximum degree of $\G_\N$. For the Schur complement system~\eqref{schur}, the following
statements hold.
\begin{enumerate}[label=(\roman*)]
    \item  For every $z\in\mathbb{R}^{|\N|}_{++}$,
\begin{equation}\label{kappa1}
       \kappa(M)\leq \frac{(m+n)^2}{2}\frac{U_{\max}}{U_{\min}}\frac{\max\{U_{\max},V_{\max}\}}{z^2_{\min}},
\end{equation}
and
\begin{equation}\label{kappa2}
    \kappa(I_m-QQ^\top)\leq \frac{(m+n)^2}{2}\frac{\max\{U_{\max},V_{\max}\}}{z^2_{\min}},
\end{equation}
where $z_{\min}=\min_{1\leq t\leq\cN}z_t$,
$U_{\min}=\min_{1\leq i\leq m} U_{ii}$, 
$U_{\max}=\max_{1\leq i\leq m} U_{ii}$, and $V_{\max}=\max_{1\leq j\leq n-1}V_{jj}$.

    \item Suppose that the sequence $\{z^{(k)}\}$ generated by Algorithm {\ref{IPRM}} converges to 
    a solution $x^*$ of \eqf{pp} satisfying the primal constraint nondegeneracy condition. For any constants $0<\eta_1<1<\eta_2$, there exists $K_\eta\in\mathbb{N}$ such that for all $k\ge K_\eta$, 
\begin{equation}\label{kappa3}
  \kappa(M^{(k)})\leq \frac{\varrho^2_\N (m+n)^2}{2}\frac{\eta^2_2}{\eta^2_1}\left(\frac{x^*_{\max}}{x^*_{\min}}\right)^4,
\end{equation}
and 
\begin{equation}\label{kappa4}
\kappa(I_m-Q^{(k)}(Q^{(k)})^\top)\leq \frac{\varrho_\N (m+n)^2}{2}\frac{\eta_2}{\eta_1}\left(\frac{x^*_{\max}}{x^*_{\min}}\right)^2,
\end{equation}
where $x^*_{\max}=\max_{j\in\spt(x^*)}x^*_j>0$ and $x^*_{\min}=\min_{j\in\spt(x^*)}x^*_j>0$.
\end{enumerate}
\end{proposition}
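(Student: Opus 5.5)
The plan is to bound $\lambda_{\max}$ and $\lambda_{\min}$ of $M=U-DV^{-1}D^\top$ separately, using the factorization $A_\N Z^2A_\N^\top$, and to treat $I_m-QQ^\top=U^{-1/2}MU^{-1/2}$ the same way.

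\emph{Largest eigenvalues.} Since $DV^{-1}D^\top\succeq0$, we have $M\preceq U$, so $\lambda_{\max}(M)\le U_{\max}$. Likewise $I_m-QQ^\top\preceq I_m$, so $\lambda_{\max}(I_m-QQ^\top)\le 1$.

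\emph{Smallest eigenvalues.} Here I use the block-inverse identity: $M^{-1}$ is the leading $m\times m$ block of $(A_\N Z^2A_\N^\top)^{-1}$. Hence
\[
\lambda_{\min}(M)\ge \lambda_{\min}(A_\N Z^2A_\N^\top)\ge z_{\min}^2\,\lambda_{\min}(A_\N A_\N^\top).
\]
So I need a lower bound on the smallest eigenvalue of the reduced incidence Gram matrix, i.e.\ of the grounded (Dirichlet) Laplacian of the connected graph $\G_\N$ with vertex $v_n$ removed. I plan to bound its inverse by the trace of the inverse. The inverse of a grounded Laplacian has entries given by effective resistances to the ground vertex, which gives
\[
\|(A_\N A_\N^\top)^{-1}\|\le \operatorname{tr}\big((A_\N A_\N^\top)^{-1}\big)\le \sum_{v} R(v,v_n)\le (m+n-1)\cdot(\text{diameter}).
\]
Since the diameter is at most $m+n-1$, this is roughly $(m+n)^2/2$ after a careful count; summing path lengths $1+2+\cdots$ gives the factor $1/2$. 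It may be cleaner to work with a spanning tree $T$ of $\G_\N$: removing edges only decreases the Laplacian, so $\lambda_{\min}(A_\N A_\N^\top)\ge\lambda_{\min}(A_TA_T^\top)$, and $(A_TA_T^\top)^{-1}$ has entries equal to the number of common edges on the paths to the root. Its trace is then the sum of depths, at most $\sum_{k=1}^{m+n-1}k\le (m+n)^2/2$.

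\emph{Assembling (i).} This gives $\lambda_{\min}(M)\ge 2z_{\min}^2/(m+n)^2$. To obtain the stated $\max\{U_{\max},V_{\max}\}$ factor, I will apply the same argument to the weighted matrix, keeping $\lambda_{\max}(A_\N Z^2A_\N^\top)$ in play. Write $M$'s spectrum bounds through the whole matrix: $\lambda_{\min}(M)\ge\lambda_{\min}(A_\N Z^2A_\N^\top)$. Then
\[
\kappa(M)\le \frac{U_{\max}}{\lambda_{\min}}\le \frac{U_{\max}}{U_{\min}}\cdot\frac{U_{\min}}{\lambda_{\min}},
\]
and using $U_{\min}\le\max\{U_{\max},V_{\max}\}$ gives \eqref{kappa1}. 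For the scaled matrix,
\[
\lambda_{\min}(U^{-1/2}MU^{-1/2})\ge \lambda_{\min}(M)/U_{\max}\ge \frac{2z_{\min}^2}{(m+n)^2\max\{U_{\max},V_{\max}\}},
\]
which gives \eqref{kappa2}.

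\emph{Part (ii).} The point is to replace $z_{\min}$, which tends to $0$ off the support, by $x^*_{\min}$. Partition $\N$ into $\spt(x^*)$ and its complement. By primal nondegeneracy, $\spt(x^*)$ is a spanning tree $T$ with $m+n-1$ edges. The Loewner bound
\[
A_\N Z^2A_\N^\top\succeq A_TZ_T^2A_T^\top\succeq (\min_T z)^2\,A_TA_T^\top
\]
lets me run the tree argument on $T$ alone. For $k\ge K_\eta$ we have $\eta_1x_j^*\le z_j^{(k)}\le\eta_2 x_j^*$ on $T$, and the off-support $z_j^{(k)}$ are small, say at most $x^*_{\min}$. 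Each diagonal entry satisfies $U_{ii},V_{jj}\le\varrho_\N(\eta_2x^*_{\max})^2$ and $U_{\min}\ge(\eta_1x^*_{\min})^2$. Substituting these into the bounds from (i) yields \eqref{kappa3} and \eqref{kappa4}; the power of $\eta$ and of $x^*$ follows from the bookkeeping.

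\emph{Main obstacle.} The combinatorial step is the hardest: getting exactly $(m+n)^2/2$ for $\|(A_TA_T^\top)^{-1}\|$ of the grounded tree Laplacian. I will try the path-counting representation of the inverse and bound the spectral norm by the trace.
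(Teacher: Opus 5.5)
Your proof is correct, but the spectral step takes a different route from the paper's.

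\emph{The paper's route.} It works through the Jacobi-scaled normal matrix $N=\begin{pmatrix} I_m & Q\\ Q^\top & I_{n-1}\end{pmatrix}$:
\begin{itemize}
\item it bounds $\kappa(N)$ using $\lambda_{\max}(N)\le 2$ and $\lambda_{\min}(N)\ge \lambda_{\min}(A_{\mathcal N}Z^2A_{\mathcal N}^\top)/\max\{U_{\max},V_{\max}\}$;
\item it converts this into a bound on $1/(1-\sigma_{\max}^2(Q))=(\sqrt{\kappa(N)}+1/\sqrt{\kappa(N)})^2/4$;
\item it then uses $\lambda_{\min}(M)\ge U_{\min}(1-\sigma_{\max}^2(Q))$.
\end{itemize}
The last step is the source of the extra factor $U_{\max}/U_{\min}$ in \eqref{kappa1}, and of the $\varrho_{\mathcal N}^2$ and the fourth power in \eqref{kappa3}.

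\emph{Your route.} You use that $M^{-1}$ is the leading principal block of $(A_{\mathcal N}Z^2A_{\mathcal N}^\top)^{-1}$, so $\lambda_{\min}(M)\ge\lambda_{\min}(A_{\mathcal N}Z^2A_{\mathcal N}^\top)$ directly. Together with $M\preceq U$ this gives $\kappa(M)\le \frac{(m+n)(m+n-1)}{2}\,U_{\max}/z_{\min}^2$. In (ii) it gives a bound of order $\varrho_{\mathcal N}(x^*_{\max}/x^*_{\min})^2$. Both are sharper than, and imply, the stated bounds.

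\emph{The combinatorial step.} Both proofs control the same quantity, $\|A_T^{-1}\|_F^2=\operatorname{tr}\bigl((A_TA_T^\top)^{-1}\bigr)$ for a spanning tree $T$. The paper bounds it crudely by $(m+n-1)^2$, using only that $A_T^{-1}$ has entries in $\{0,\pm1\}$ (total unimodularity), and recovers the factor $1/2$ from the block calculation above. Your depth count gives $(m+n)(m+n-1)/2$ immediately, since $A_T^{-1}e_v$ is $\pm1$ exactly on the edges of the path from $v$ to $v_n$.

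The paper's route has the merit of tying $\kappa(I_m-QQ^\top)$ explicitly to the conditioning of the scaled full normal matrix. Yours is shorter and yields stronger bounds.

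\emph{Small repairs.}
\begin{itemize}
\item Primal constraint nondegeneracy only says the support graph of $x^*$ is connected. If $x^*$ is not basic, its support has more than $m+n-1$ edges and is not a tree. You should pick a spanning tree $T\subseteq\operatorname{spt}(x^*)$, as the paper does; nothing else changes.
\item To get exactly the powers of $\eta_1,\eta_2$ in \eqref{kappa3}--\eqref{kappa4}, take $\sqrt{\eta_1}\,x^*_j\le z^{(k)}_j\le\sqrt{\eta_2}\,x^*_j$ on the support, and all $z^{(k)}_t\le\sqrt{\eta_2}\,x^*_{\max}$. This is allowed because $\eta_1,\eta_2$ are arbitrary.
\item $\bar A$ is an unsigned incidence matrix, so $A_{\mathcal N}A_{\mathcal N}^\top$ is a grounded signless Laplacian. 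Because the graph is bipartite, it is similar to the grounded Laplacian via $\operatorname{diag}(I_m,-I_{n-1})$, so your trace and depth computations go through unchanged.
\end{itemize}
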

\begin{proof}
(i) Let $w=(p,q)\in\mathbb{R}^{m+(n-1)}$ satisfy $\|w\|=1$, and set $q_n:=0$. Then
\begin{equation*}   w^\top(A_\N Z^2A_\N ^\top)w=\sum_{(u_{i_t},v_{j_t})\in\N}z^2_t(p_{i_t}+q_{j_t})^2.
\end{equation*} 
Since $\G_\N$ is connected, there exists $\widehat{\N}\subseteq\N$ such that $|\widehat{\N}|=m+n-1$ and $A_{\widehat{\N}}$ is 
nonsingular. Then we have $\|A_{\widehat{\N}}^{-1}\|\leq\|A_{\widehat{\N}}^{-1}\|_F\leq m+n-1$ due to the total unimodularity of $A_{\widehat\N}$; see
\cite[Proposition 1]{lu2024pdot}. It follows that
\begin{equation}\label{lower}
\begin{aligned}  w^\top(A_\N Z^2A_\N ^\top)w\geq\sum_{(u_{i_t},v_{j_t})\in\widehat\N}z^2_t(p_{i_t}+q_{j_t})^2\geq \frac{\min\{z^2_t\mid(i_t,j_t)\in \widehat{\N}\}}{(m+n-1)^2}.\\
    \end{aligned}
\end{equation}
Therefore, $\lambda_{\min}(A_\N Z^2A_\N ^\top)\ge \frac{\min\{z^2_t\mid(u_{i_t},v_{j_t})\in \widehat{\N}\}}{(m+n-1)^2}$.
Consider the following matrix
\begin{equation*}
N=\begin{pmatrix}
U^{-\frac{1}{2}}& 0\\			
0& V^{-\frac{1}{2}}\\
\end{pmatrix}
\begin{pmatrix}
U& D\\			
D^\top&V\\
\end{pmatrix}\begin{pmatrix}
U^{-\frac{1}{2}}& 0\\			
0& V^{-\frac{1}{2}}\\
\end{pmatrix}=\begin{pmatrix}
I_m& Q\\			
Q^\top& I_{n-1}\\
\end{pmatrix}.
\end{equation*}
The matrix $N$ is positive definite and  $\kappa(N)=\frac{1+\sigma_{\max}(Q)}{1-\sigma_{\max}(Q)}$. Equivalently, $\sigma_{\max}(Q)=\frac{\kappa(N)-1}{\kappa(N)+1}<1$. Thus, $1-\sigma^2_{\max}(Q)=\frac{4}{(\sqrt{\kappa(N)}+1/\sqrt{\kappa(N)})^2}$. Moreover, $\lambda_{\max}(N)=1+\sigma_{\max}(Q)\leq 2$, and the diagonal scaling gives
$\lambda_{\min}(N)\geq \frac{\lambda_{\min}(A_\N Z^2A^\top_\N)}{\max\{U_{\max},V_{\max}\}}$. Then
\begin{equation}\label{kappaN}
    \kappa(N)\leq\frac{2\max\{U_{\max},V_{\max}\}}{\lambda_{\min}(A_\N Z^2A_\N^\top)}\le2(m+n-1)^2\frac{\max\{U_{\max},V_{\max}\}}{\min\{z^2_t\mid(i_t,j_t)\in \widehat{\N}\}}.
\end{equation}

Since
$M=U^{\frac{1}{2}}(I_m-QQ^{\top})U^{\frac{1}{2}}$, we have $\lambda_{\min}(M)\ge \lambda_{\min}(U)\lambda_{\min}(I_m-QQ^\top)= U_{\min}(1-\sigma_{\max}^2(Q))$ and $\lambda_{\max}(M)\leq\lambda_{\max}(U)= U_{\max}$. Therefore,
\begin{equation}\label{kappaM}
  \kappa(M)\leq\frac{U_{\max}}{U_{\min}(1-\sigma^2_{\max}(Q))}=\frac{U_{\max}}{U_{\min}}\frac{(\sqrt{\kappa(N)}+1/\sqrt{\kappa(N)})^{2}}{4},
\end{equation}
and similarly
\begin{equation}\label{kappaIQ}
      \kappa(I_m-QQ^\top)
      \leq \frac{1}{1-\sigma_{\max}^2(Q)}=\frac{(\sqrt{\kappa(N)}+1/\sqrt{\kappa(N)})^{2}}{4}.
\end{equation}

Because $\min\{z^2_t\mid(u_{i_t},v_{j_t})\in \widehat{\N}\}\ge z_{\min}^2$ and $\max\{U_{\max},V_{\max}\}\ge z_{\min}^2$,  inequality \eqref{kappaN}, together with $2\leq m\leq n$, gives
$$\frac{(\sqrt{\kappa(N)}+1/\sqrt{\kappa(N)})^{2}}{4}\leq \frac{\kappa(N)+3}{4}\leq (m+n)^2\frac{\max\{U_{\max},V_{\max}\}}{2z_{\min}^2} $$
Substituting the upper bound into \eqref{kappaM} and \eqref{kappaIQ} yields~\eqref{kappa1} and~\eqref{kappa2}, respectively.

(ii) Fix any constants $0<\eta_1<1<\eta_2$. Since
$z^{(k)}\to x^*$, there exists $K_\eta\in\mathbb{N}$ such that, for
all $k\geq K_\eta$,
$\sqrt{\eta_1}x^*_{\spt(x^*)}\leq z^{(k)}_{\spt(x^*)}\leq \sqrt{\eta_2}x^*_{\spt(x^*)}$ and $\max_{1\leq t\leq\cN}\{z^{(k)}_t\}\leq\sqrt{\eta_2}x^*_{\max}$. By the primal constraint nondegeneracy of $x^*$, there exists a subset $\widehat{\N}\subseteq \spt_\G(\mat_\N(x^*))$ such that $A_{\widehat\N}$ is nonsingular and $\min\{(z^{(k)}_t)^2\mid(u_{i_t},v_{j_t})\in \widehat{\N}\}\geq \eta_1(x_{\min}^*)^2$. Furthermore, because every vertex is incident to at most $\varrho_\N$ edges, $\max\{U^{(k)}_{\max},V^{(k)}_{\max}\}\leq \eta_2\varrho_\N(x^*_{\max})^2$. Every vertex in $\U$ is incident to at least one edge in $\widehat{\N}$. So we have $U^{(k)}_{\min}\ge\eta_1 
(x^*_{\min})^2$. Substituting these three estimates into
\eqref{kappa1} and \eqref{kappa2} yields
\eqref{kappa3} and \eqref{kappa4}.
\qed
\end{proof}


\section{Convergence Analysis}\label{Convergence analyses}
Based on Proposition~\ref{converseries} and the structure of
Algorithm~\ref{MSIPRM}, the convergence analysis of MSIPRM can be reduced to that of IPRM. Let $\{\mu^{(k)}\}$, $\{\rho^{(k)}\}$, $\{\phi^{(k)}\}$, $\{\omega^{(k)}\}$, $\{z^{(k)}\}$, and $\{y^{(k)}\}$ denote the sequences generated by
Algorithm~\ref{IPRM}. Lemma~\ref{boundx} records several basic properties of the relevant sequences generated
by IPRM, including their boundedness. The proof is deferred to Appendix~\ref{proof:lemma-c}.

\begin{lemma}\label{boundx}
The following statements hold.
\begin{enumerate}[label=(\roman*)]

\item The sequence $\{\mu^{(k)}\}$ is nonincreasing, and $\{\phi^{(k)}\}$ is bounded.

\item The sequences $\{x^{(k)}\}$ and $\{z^{(k)}\}$ are bounded. 
\end{enumerate}
\end{lemma}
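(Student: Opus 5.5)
The plan is to prove (i) directly from the update rules of Algorithm~\ref{IPRM}, using Lemma~\ref{gradient}(iii) and Lemma~\ref{z1-z2} to control how $\phi$ changes. Part (ii) then follows from the structure of $A_\N$ together with Lemma~\ref{central}.

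\emph{Monotonicity of $\mu^{(k)}$.} In Step~1, $\gamma=\min\{\gamma_0,\mu^{(k)}/\phi^{(k)}\}$, so $\gamma\phi^{(k)}\le\mu^{(k)}$. Hence $\Delta\mu^{(k)}=-\mu^{(k)}+\gamma\phi^{(k)}\le 0$. Because $\alpha^{(k)}\in(0,1]$, this gives $0\le\mu^{(k)}+\alpha^{(k)}\Delta\mu^{(k)}\le\mu^{(k)}$. Step~5 only divides by $\eta^l\ge1$. Therefore $\{\mu^{(k)}\}$ is nonincreasing and bounded above by $\mu^{(0)}$.

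\emph{Boundedness of $\phi^{(k)}$.} I will follow one iteration through its three sub-updates.
\begin{enumerate}[label=(\alph*)]
\item The line search \eqref{ineq} gives $\phi_{(\mu^{(k)}+\alpha^{(k)}\Delta\mu^{(k)},\rho^{(k)})}(\omega^{(k+1)})\le\phi^{(k)}$.
\item The update $\rho^{(k)}\to\rho^{(k+1)}\ge\rho^{(k)}$ does not increase $\phi$, because of \eqref{partial2rho} applied componentwise.
\item If the loop in Step~5 exits with $l=0$, nothing changes. If $l\ge1$, set $\tilde\mu=\mu^{(k+1)}/\eta^{l-1}$. The loop condition at $l-1$ gives $\eta\phi_{(\tilde\mu,\rho^{(k+1)})}(\omega^{(k+1)})\le\tilde\mu\le\mu^{(0)}$. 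Lemma~\ref{z1-z2}, applied with $\hat\mu_1=\tilde\mu$ and $\hat\mu_2=\tilde\mu/\eta$, combined with the triangle inequality on $\xi=z-x$, then yields
\[
\sqrt{2\phi^{(k+1)}}\le\sqrt{2\mu^{(0)}/\eta}+\sqrt{|\N|\mu^{(0)}/\rho^{(0)}}=:\sqrt{2\bar c}.
\]
Here I use $\rho^{(k+1)}\ge\rho^{(0)}$.
\end{enumerate}
Together these give $\phi^{(k+1)}\le\max\{\phi^{(k)},\bar c\}$. Induction then gives $\phi^{(k)}\le\max\{\phi^{(0)},\bar c\}$ for all $k$. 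The delicate point is step (c): one must compare with the \emph{last} value of $\mu$ at which the loop test still held, not with the pre-loop value. Only the former gives a bound that does not accumulate over iterations.

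\emph{Part (ii).} By Lemma~\ref{central}, $z^{(k)}\ge0$ and $A_\N z^{(k)}=b+A_\N\xi^{(k)}$, where $\|\xi^{(k)}\|=\sqrt{2\phi^{(k)}}$ is bounded by (i). The matrix $A_\N$ keeps all $m$ rows indexed by $\U$, since only the last $\V$-row was deleted. Every edge is incident to exactly one vertex of $\U$. Summing those $m$ rows therefore gives
\[
\|z^{(k)}\|_1=\mathbf 1_m^\top\bigl(b+A_\N\xi^{(k)}\bigr)_{1:m}\le\mathbf 1_m^\top\bm\mu+\|A_\N\|_1\sqrt{2\phi^{(k)}},
\]
so $\{z^{(k)}\}$ is bounded. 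Since $x^{(k)}=z^{(k)}-\xi^{(k)}$ and $\{\xi^{(k)}\}$ is bounded, $\{x^{(k)}\}$ is bounded as well. If the row-sum identity were not available, I would instead argue by contradiction after normalizing $z^{(k)}$, using that $A_\N x=0,\,x\ge0$ has only the zero solution.
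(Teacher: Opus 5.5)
Your proof is correct, but both parts take a different route from the paper. In (i), the paper does not use a one-step recursion. It shows via \eqref{muk+1} and \eqref{upperphi2} that once $\Delta\mu^{(k)}<0$ or the Step~5 loop has been entered, the relation $\gamma_0\phi^{(k)}<\mu^{(k)}<\eta\phi^{(k)}$ holds for all later iterates, which gives $\phi^{(k)}<\mu^{(0)}/\gamma_0$. Otherwise $\mu^{(k)}$ stays constant and \eqref{ineq} makes $\phi^{(k)}$ decrease. The bound \eqref{upperphi2} relies on the restrictions $\gamma_0<1/(1+\theta_0)^2$ and $\eta<(1+\theta_0\sqrt{2\rho^{(0)}/|\mathcal N|})^2$. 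Your estimate $\phi^{(k+1)}\le\max\{\phi^{(k)},\bar c\}$, obtained by comparing with the last loop value $\tilde\mu$, needs neither these restrictions nor the case split. It is therefore simpler and more robust for the boundedness claim alone. What the paper's route buys is the invariant itself: it later supplies the index $k_1$ in Theorem~\ref{Hoffman} and the relation $\mu^{(k)}<\eta\phi^{(k)}$ used in Theorems~\ref{uppermuk} and~\ref{quadratic}, and your argument does not produce it.

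In (ii), the paper uses exactly your fallback: normalize $z^{(k)}$, extract a limit $\hat z\ge0$ with $\|\hat z\|=1$ and $A_{\mathcal N}\hat z=0$, and reach a contradiction. That argument works for any LP with bounded feasible region but gives no constant. Your row-sum argument exploits the transportation structure and is quantitative; since $x^{(k)}\in\mathcal F$, you even get the identity $\|z^{(k)}\|_1=\sum_{i=1}^m\mu_i+\mathbf 1^\top\xi^{(k)}$. The only slip is the constant. Because $\mathbf 1_m^\top(A_{\mathcal N}\xi)_{1:m}=\mathbf 1^\top\xi\le\sqrt{|\mathcal N|}\,\|\xi\|$, the factor in front of $\sqrt{2\phi^{(k)}}$ should be $\sqrt{|\mathcal N|}$, not the induced norm $\|A_{\mathcal N}\|_1$ (which is at most $2$). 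This does not affect boundedness.
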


Before establishing the global convergence of IPRM, we consider the case of an unbounded penalty sequence.
\begin{theorem}\label{infeasible}
Suppose that $\rho^{(k)}\to\infty$ and $\phi^{(k)}\to0$. Then there exist an index set $\mathcal{K}=\{k_j\}$ and a pair
$(\bar\lambda,\bar s)$ such that
\[\lim\limits_{j\to\infty}\lambda^{(k_j)}/{\rho^{(k_j)}}=\bar\lambda \quad \textup{and}\quad \lim\limits_{j\to\infty}s^{(k_j)}/\rho^{(k_j)}=\bar s.\] Furthermore, if $b^\top\bar\lambda>0$, then the primal problem \eqf{pp} is infeasible.
\end{theorem}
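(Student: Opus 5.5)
The plan is to obtain the limits from a boundedness argument on the scaled dual iterates, and then to show that the limit pair is a Farkas certificate of primal infeasibility.

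\textbf{Step 1: boundedness of $s^{(k)}/\rho^{(k)}$.} This follows from the penalty update in Step~4 of Algorithm~\ref{IPRM}. For every $k\ge1$ we have
\[
\rho^{(k)}\ge \sigma\|s^{(k)}\|_\infty/\max\{\|x^{(k)}\|,1\}.
\]
By Lemma~\ref{boundx}(ii) the sequence $\{x^{(k)}\}$ is bounded, say $\|x^{(k)}\|\le B_x$. Hence $\|s^{(k)}\|_\infty/\rho^{(k)}\le \max\{B_x,1\}/\sigma$, so $\{s^{(k)}/\rho^{(k)}\}$ is bounded.

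\textbf{Step 2: boundedness of $\lambda^{(k)}/\rho^{(k)}$.} All iterates stay in $\F$, so $A_\N^\top\lambda^{(k)}=c_\N-s^{(k)}$. Since $A_\N$ has full row rank, this gives
\[
\lambda^{(k)}=(A_\N A_\N^\top)^{-1}A_\N(c_\N-s^{(k)}).
\]
Dividing by $\rho^{(k)}\to\infty$ shows that $\{\lambda^{(k)}/\rho^{(k)}\}$ is bounded as well. By Bolzano--Weierstrass we extract an index set $\mathcal K=\{k_j\}$ along which $\lambda^{(k_j)}/\rho^{(k_j)}\to\bar\lambda$ and $s^{(k_j)}/\rho^{(k_j)}\to\bar s$. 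This proves the first claim.

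\textbf{Step 3: the dual ray condition $A_\N^\top\bar\lambda\le0$.} Dividing the dual feasibility relation by $\rho^{(k_j)}$ and passing to the limit, using $c_\N/\rho^{(k_j)}\to0$, gives $A_\N^\top\bar\lambda+\bar s=0$. It remains to show $\bar s\ge0$, which is the only step needing care. By \eqref{definez}--\eqref{definey} we have $z-x=y-s/\rho$, that is,
\[
s^{(k)}/\rho^{(k)}=y^{(k)}-\xi^{(k)}.
\]
The hypothesis $\phi^{(k)}\to0$ gives $\xi^{(k)}\to0$, so $y^{(k_j)}\to\bar s$ along $\mathcal K$. Since $y^{(k)}\ge0$ componentwise (Lemma~\ref{central}, or directly from \eqref{definey}), we conclude $\bar s\ge0$. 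Therefore $A_\N^\top\bar\lambda=-\bar s\le0$.

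\textbf{Step 4: Farkas argument.} Suppose $b^\top\bar\lambda>0$ and, for contradiction, that \eqf{pp} has a feasible point $x\ge0$ with $A_\N x=b$. Then
\[
0<b^\top\bar\lambda=x^\top A_\N^\top\bar\lambda=-x^\top\bar s\le0,
\]
which is impossible. Hence \eqf{pp} is infeasible.

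I expect no serious obstacle. The only delicate point is the sign of $\bar s$ in Step~3, and it is handled by the identity $s/\rho=y-\xi$ together with $\phi^{(k)}\to0$.
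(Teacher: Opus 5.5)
Your proposal is correct and follows essentially the same route as the paper. For the first assertion the paper simply cites the penalty update together with the boundedness of $\{x^{(k)}\}$ from Lemma~\ref{boundx}, and your Steps~1--2 spell this out. For the infeasibility part the paper only refers to \cite[Theorem~4.1]{zhang2023iprqp}; your Steps~3--4 give that argument explicitly and self-containedly, via $s^{(k)}/\rho^{(k)}=y^{(k)}-\xi^{(k)}$ with $y^{(k)}\ge0$ and $\xi^{(k)}\to0$, followed by the Farkas contradiction.
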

\begin{proof}
The first assertion follows from the boundedness of $\{x^{(k)}\}$ and the update rule for $\{\rho^{(k)}\}$. The second assertion can be derived by an argument similar to that in \cite[Theorem 4.1]{zhang2023iprqp}.\qed
\end{proof}

\begin{remark}\label{theorem3}
Theorem~\ref{infeasible} shows that an unbounded penalty sequence can yield a certificate of primal infeasibility under the stated
conditions. In the remainder of this section, we focus on the case in which $\{\rho^{(k)}\}$ is bounded. Since $\{\rho^{(k)}\}$ is
nondecreasing, it converges to some
$\rho^*\geq\rho^{(0)}>0$.
\end{remark}

To derive a global error bound for IPRM, we represent the
solution set of the primal--dual pair~\eqf{pp} and \eqf{dp} as
follows:
\begin{equation*}
\begin{aligned}
&\quad\mathcal{S}(A_\N,b,c_\N)\\
&:=
\left\{\begin{pmatrix}
x\\			
				\lambda
			\end{pmatrix}\,\Bigg{|}\, \begin{pmatrix}
-I_{\cN}& 0\\			
				0&A_\N^\top
			\end{pmatrix}\begin{pmatrix}
x\\			
				\lambda
			\end{pmatrix}\leq\begin{pmatrix}
0\\		
c_\N
			\end{pmatrix},\,\begin{pmatrix}
A_\N& 0\\			
				c_\N^\top&-b^\top
			\end{pmatrix}\begin{pmatrix}
x\\			
				\lambda
			\end{pmatrix}=\begin{pmatrix}
b\\		
0
			\end{pmatrix}\right\}.
            \end{aligned}
\end{equation*}
As shown in Lemma~\ref{central}, the iterates generated by IPRM are
associated with a sequence of perturbed LP problems. By bounding the
distance from the iterates to the solution set $\mathcal{S}(A_\N,b,c_\N)$, we establish global convergence and a
global error bound for IPRM. The proof of Theorem~\ref{Hoffman} is deferred to Appendix~\ref{proof:theorem-6}.

\begin{theorem}\label{Hoffman}
Suppose that $\{\rho^{(k)}\}$ is bounded. Then the following statements hold. 
\begin{enumerate}[label=(\roman*)]
  \item If the inner loop in Step~5 of Algorithm~\ref{IPRM} does not
terminate at some iteration $k_0\geq1$, then $\omega^{(k_0)}$ is a KKT triple for the primal--dual pair 
  \eqf{pp} and \eqf{dp}.
\item If the inner loop in Step~5 always terminates finitely, then $\lim_{k\to\infty}\mu^{(k)}=0$ and $\lim_{k\to\infty}\phi^{(k)}=0$. Moreover, every cluster point of $\{\omega^{(k)}\}$ is a KKT triple for the
primal--dual pair \eqf{pp} and \eqf{dp}. Furthermore, 
 there exist an integer $k_1\in\mathbb{Z}_+$ and a constant $\widetilde C>0$ such that, for all $k\ge k_1$, $\gamma_0\phi^{(k)}<\mu^{(k)}<\eta\phi^{(k)}$ and
\begin{equation}\label{Errobound}
    \dist\left((x^{(k)},\lambda^{(k)}), \mathcal{S}(A_\N,b,c_\N)\right)\leq \widetilde C\sqrt{\mu^{(k)}}.
\end{equation}
\end{enumerate}
\end{theorem}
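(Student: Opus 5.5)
Part (i) follows from the definition of Step~5. If the inner loop never terminates at iteration $k_0$, then for every $l\ge0$ we have $\mu^{(k_0)}/\eta^l\ge\max\{\varepsilon,\eta\phi_{(\mu^{(k_0)}/\eta^l,\rho^{(k_0)})}(\omega^{(k_0)})\}$. Since $\eta>1$, the left side tends to zero while staying above $\varepsilon>0$, so nontermination is impossible when $\varepsilon>0$. In the idealized case $\varepsilon=0$, which the statement implicitly treats, we conclude $\phi_{(\mu^{(k_0)}/\eta^l,\rho^{(k_0)})}(\omega^{(k_0)})\le\mu^{(k_0)}/\eta^{l+1}\to0$. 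By Lemma~\ref{z1-z2}, $\hat z(\mu)$ is continuous in $\mu$ down to $\mu=0$, so $\phi_{(0,\rho^{(k_0)})}(\omega^{(k_0)})=0$. Then $z(x,s;0,\rho)=x$, and \eqref{z=x} gives $x\ge0$, $s\ge0$, $x\circ s=0$. Together with $\omega^{(k_0)}\in\F$, this says $\omega^{(k_0)}$ is a KKT triple.

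For part (ii), I would follow the standard IPRM global convergence argument of \cite{liu2022primal,zhang2023iprqp}, with $\rho^{(k)}\to\rho^*$ bounded. The steps are as follows.

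First, I show that the step sizes $\alpha^{(k)}$ are bounded away from zero on any subsequence where $\mu^{(k)}$ stays bounded below. On such a subsequence, Lemma~\ref{boundx} bounds $x^{(k)}$, $z^{(k)}$ and $\phi^{(k)}$. Since $s/\rho=x+y-z$ and $y=\mu/(\rho z)$ with $z$ bounded below, $s^{(k)}$ is bounded as well. Hence $z+y$ is bounded above and below, the coefficient matrix $H$ in \eqref{linearsystem} stays uniformly nonsingular, and the Armijo condition \eqref{ineq} holds with a uniform step size. The sum $\sum\alpha^{(k)}\phi^{(k)}$ is finite because $\phi$ decreases geometrically, apart from the $\mu$-reduction in Step~5. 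Combined with $\Delta\mu=-\mu+\gamma\phi$, $\gamma\le\mu/\phi$ and the rule $\mu^{(k+1)}<\eta\phi^{(k+1)}$, this forces $\phi^{(k)}\to0$ and then $\mu^{(k)}\to0$. This step is the main obstacle: nonsingularity of $H$ as $\mu\to0$ must be handled by contradiction on a subsequence with $\mu^{(k)}\ge\bar\mu>0$. Once $\mu,\phi\to0$ are established, cluster points are KKT triples by the same continuity argument as in part (i).

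For the window $\gamma_0\phi^{(k)}<\mu^{(k)}<\eta\phi^{(k)}$, the upper bound is enforced by Step~5 after every iteration. For the lower bound, if $\mu^{(k)}\le\gamma_0\phi^{(k)}$ then $\gamma=\mu/\phi$ and $\Delta\mu=0$. The choice $\gamma_0<1/(1+\theta_0)^2$, together with the bound on $\eta$, then lets $\phi$ shrink while $\mu$ stays fixed, so the ratio crosses $\gamma_0$ after finitely many steps and never drops back below it. I would verify this by comparing one-step decreases, using Lemma~\ref{z1-z2} to control how $\phi$ changes under the Step~5 reductions.

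For the error bound \eqref{Errobound}, Lemma~\ref{central} shows that $(z^{(k)},\lambda^{(k)},\rho^{(k)}y^{(k)})$ satisfies \eqref{cent}. Thus $(z^{(k)},\lambda^{(k)})$ satisfies the linear system defining $\mathcal S(A_\N,b,c_\N)$ up to residuals: $A_\N z-b=A_\N\xi$; $A_\N^\top\lambda-c_\N=\rho\xi-\rho y\le\rho\xi$; $-z\le0$; and the duality gap $c_\N^\top z-b^\top\lambda=\rho y^\top z+O(\|\xi\|)=|\N|\mu+O(\|\xi\|)$. Here the $O(\|\xi\|)$ term uses the boundedness of $z,\lambda,y$, with $\lambda$ bounded via the affine relation with $s$ and the full row rank of $A_\N$. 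By Hoffman's error bound for this polyhedron, $\dist((z^{(k)},\lambda^{(k)}),\mathcal S)\le C_H(\|\xi^{(k)}\|+\mu^{(k)})$. Finally, $\|x-z\|=\|\xi\|=\sqrt{2\phi}<\sqrt{2\mu/\gamma_0}$ on the window, and $\mu\le\sqrt{\mu}\sqrt{\mu^{(0)}}$. These give the bound $\widetilde C\sqrt{\mu^{(k)}}$.
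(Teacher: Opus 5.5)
Your part (i), including the remark that Step~5 cannot run forever when $\varepsilon>0$, and the overall shape of part (ii) match the paper. For $\mu^{(k)}\to0$ when Step~5 reduces $\mu$ only finitely often, the paper simply cites the IPRM literature.

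There is one genuine gap: the boundedness of $\{s^{(k)}\}$, on which both your uniform step-size argument and your error bound rest. You derive it from $s/\rho=x+y-z$, $y=\mu/(\rho z)$ and ``$z$ bounded below''. But Lemma~\ref{boundx} bounds $z$ only from above. For bounded $x$ and $\mu\ge\bar\mu$, $z_j$ stays away from zero exactly when $s_j/\rho-x_j$ stays bounded above, so the argument is circular. In the error-bound regime, where $\mu^{(k)}\to0$, $z^{(k)}$ need not stay away from zero at all, yet you still need $\lambda^{(k)}$ bounded there. Nothing in your proposal rules out $s^{(k)}_j\to+\infty$ with $y^{(k)}_j\to\infty$, $z^{(k)}_j\to0$ and $\xi^{(k)}$ bounded.

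The missing ingredient is the penalty update in Step~4: $\rho^{(k+1)}\ge\sigma\|s^{(k+1)}\|_\infty/\max\{\|x^{(k+1)}\|,1\}$. With $\{\rho^{(k)}\}$ and $\{x^{(k)}\}$ bounded, this makes $\{s^{(k)}\}$ bounded, and then $\lambda^{(k)}=(A_{\mathcal N}A_{\mathcal N}^\top)^{-1}A_{\mathcal N}(c_{\mathcal N}-s^{(k)})$ is bounded too. This is where the hypothesis on $\rho^{(k)}$ does its work, and it is how the paper opens part (ii). With it, $y=z-x+s/\rho$ is bounded above, so $z=\mu/(\rho y)$ is bounded below when $\mu\ge\bar\mu$, and your uniform step-size argument goes through.

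Please also straighten the logic of that step, as follows.
\begin{itemize}
\item Since $\mu^{(k)}$ is nonincreasing, $\mu^{(k)}\not\to0$ means $\mu^{(k)}\ge\bar\mu>0$ for all $k$.
\item Then Step~5 fires only finitely often, because each firing divides $\mu$ by at least $\eta$.
\item Afterwards $\phi$ decreases geometrically; the $\rho$-increase does not raise $\phi$, by \eqref{partial2rho}. This contradicts $\bar\mu\le\mu^{(k)}<\eta\phi^{(k)}$.
\end{itemize}
Your claim that $\sum\alpha^{(k)}\phi^{(k)}$ is finite ``apart from Step~5'' is not needed; note that Step~5 can increase $\phi$. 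Also, $\phi^{(k)}\to0$ should be deduced only after the window is established, from $\phi^{(k)}<\mu^{(k)}/\gamma_0$.

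Granting that fix, your error bound is correct but takes a different route from the paper.
\begin{itemize}
\item The paper replaces $z^{(k)}$ by $\tilde z^{(k)}=z(x^{(k)},s^{(k)};0,\rho^{(k)})$. By Lemma~\ref{central} with $\mu=0$, the pair $(\tilde z^{(k)},\lambda^{(k)})$ then lies exactly in $\mathcal S(A_{\mathcal N},b+A_{\mathcal N}\tilde\xi^{(k)},c_{\mathcal N}+\rho^{(k)}\tilde\xi^{(k)})$. All Hoffman residuals are linear in $\tilde\xi^{(k)}$, and Lemma~\ref{z1-z2} gives $\|\tilde z^{(k)}-z^{(k)}\|\le\sqrt{|\mathcal N|\mu^{(k)}/\rho^{(k)}}$.
\item You keep the central-path point at $\mu^{(k)}>0$ and feed the complementarity gap $\rho y^\top z=|\mathcal N|\mu$ into Hoffman's bound as an extra residual. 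This yields $O(\|\xi^{(k)}\|+\mu^{(k)})$, and then $O(\sqrt{\mu^{(k)}})$ via the window.
\end{itemize}
Your residual computations are right, and your route avoids the $\mu=0$ projection and Lemma~\ref{z1-z2}. The paper's version has the conceptual advantage of exhibiting an exact KKT point of a perturbed LP, the same viewpoint used in Lemma~\ref{samespt} and Theorem~\ref{uppermuk}. Both versions need $z^{(k)}$ and $\lambda^{(k)}$ bounded, so both depend on the $s$-bound from the penalty update.
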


Building on classical local convergence results for Newton-type methods \cite{qi1993nonsmooth,chan2008constraint}, we next establish that IPRM achieves local quadratic convergence under suitable regularity conditions. Motivated by Theorem~\ref{zeromeasure} and the construction of the cost functions in \eqf{costk}, we 
assume that the primal and dual constraint nondegeneracy conditions hold at the KKT triple $\omega^*=(x^*,\lambda^*,s^*)$. Under these conditions, $\omega^*$ is strictly complementary. 
Consequently, the mapping
$\xi(x,s;\mu,\rho)$ is smooth in a
neighborhood of $(x^*,s^*;0,\rho^*)$ and is therefore
strongly semismooth in this neighborhood.

\begin{lemma}\label{nonsingular}
Suppose that $\mu^{(k)}\to0$, $\rho^{(k)}\to\rho^*$, and $\omega^{(k)}\to\omega^*=(x^*,\lambda^*,s^*)$. Suppose further that the primal and dual constraint nondegeneracy conditions hold at $\omega^*$. Then the matrix $H^{(k)}=H_{(\mu^{(k)},\rho^{(k)})}(\omega^{(k)})$ appearing in~\eqref{linearsystem} converges to a nonsingular matrix:
\begin{equation*}
    H^*=\begin{pmatrix}
0& A_\N^\top  &I_{\cN}\\			
				-A_\N&0&0\\  (Z^{*}+Y^*)^{-1}Y^{*}&0&\frac{1}{\rho^{*}}(Z^{*}+Y^{*})^{-1}Z^{*}
			\end{pmatrix},
\end{equation*}
where $Y^*=\diag(s^*/\rho^*)$ and $Z^*=\diag(x^*)$.
\end{lemma}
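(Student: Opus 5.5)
The proof has two parts: convergence of the entries of $H^{(k)}$, and nonsingularity of the limit $H^*$.

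\textbf{Convergence.} The only iterate-dependent blocks of $H^{(k)}$ are the diagonal matrices $(Z^{(k)}+Y^{(k)})^{-1}Y^{(k)}$ and $\frac{1}{\rho^{(k)}}(Z^{(k)}+Y^{(k)})^{-1}Z^{(k)}$. So it suffices to show that $z^{(k)}\to x^*$ and $y^{(k)}\to s^*/\rho^*$, and that $z^*_j+y^*_j>0$ for every $j$. The maps $z_j(x_j,s_j;\mu,\rho)$ and $y_j(x_j,s_j;\mu,\rho)$ in \eqref{definez}--\eqref{definey} are continuous in $(x_j,s_j,\mu,\rho)$ on $\mu\ge0,\ \rho>0$, because the square root is continuous. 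Hence
\[
z^{(k)}\to z(x^*,s^*;0,\rho^*)=\max\{0,\,x^*-s^*/\rho^*\},\qquad
y^{(k)}\to \max\{0,\,s^*/\rho^*-x^*\}
\]
by Lemma~\ref{gradient}(iv). Since $\omega^*$ is a KKT triple, $x^*\ge0$, $s^*\ge0$ and $x^*\circ s^*=0$, so these limits equal $x^*$ and $s^*/\rho^*$ respectively.

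As noted before the lemma, primal and dual constraint nondegeneracy give strict complementarity: for each $j$ exactly one of $x_j^*$, $s_j^*$ is positive. Therefore $z_j^*+y_j^*=x_j^*+s_j^*/\rho^*>0$, the inverse $(Z^*+Y^*)^{-1}$ exists, and $H^{(k)}\to H^*$ entrywise.

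\textbf{Nonsingularity.} Write $B=\spt(x^*)$ and $N=\spt(s^*)$; by strict complementarity these partition $\{1,\dots,|\N|\}$. Suppose $H^*(\Delta x,\Delta\lambda,\Delta s)=0$. For $j\in B$ we have $y_j^*=0$, so the third block row reads $\frac{1}{\rho^*}\Delta s_j=0$, i.e.\ $\Delta s_B=0$. For $j\in N$ we have $z_j^*=0$, so the third row reads $\Delta x_j=0$, i.e.\ $\Delta x_N=0$. The remaining rows become
\[
A_\N\Delta x=A_{B}\Delta x_B=0,\qquad A_\N^\top\Delta\lambda+\Delta s=0 .
\]
The second equation gives $A_B^\top\Delta\lambda=-\Delta s_B=0$. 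Primal nondegeneracy says $\rank(A_B)=m+n-1$, the number of rows, so $A_B$ has full row rank and $\Delta\lambda=0$. Then $\Delta s=-A_\N^\top\Delta\lambda=0$.

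For $\Delta x_B$, dual nondegeneracy, in the form used here, says $\rank(A_{\N\setminus\spt_\G(S^*)})=|\N|-|\spt(s^*_\N)|$. The index set $\N\setminus\spt_\G(S^*)$ corresponds to $B$ and the right-hand side equals $|B|$, so $A_B$ has full column rank. Hence $A_B\Delta x_B=0$ gives $\Delta x_B=0$. All components of the kernel vector vanish, so $H^*$ is nonsingular.

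The main point needing care is strict complementarity. It follows from the two nondegeneracy conditions: together they force $|B|=m+n-1$ with $A_B$ square and nonsingular, and the equivalence with uniqueness cited from \cite[Theorem~4.5]{sierksma2001linear} then yields $|B|+|N|=|\N|$. If this step needed to be shown directly, I would argue by contradiction: a zero pair $x_j^*=s_j^*=0$ would allow perturbing either the primal or the dual solution along that index, contradicting uniqueness.
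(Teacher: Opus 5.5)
Your proof is correct and follows the paper's argument essentially step for step. Convergence of $Z^{(k)}$ and $Y^{(k)}$, together with strict complementarity, gives $H^{(k)}\to H^*$. The kernel is then eliminated by reading the third block row on $\{j\mid x_j^*>0\}$ and $\{j\mid s_j^*>0\}$, using primal nondegeneracy (full row rank of $A_B$) to get $\Delta\lambda=0$ and dual nondegeneracy (full column rank of $A_B$) to get $\Delta x_B=0$.

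Your continuity argument via Lemma~\ref{gradient}(iv) usefully fills in the convergence $z^{(k)}\to x^*$, $y^{(k)}\to s^*/\rho^*$, which the paper only asserts. Strict complementarity also follows more directly than through uniqueness: $\{j\mid x_j^*>0\}\subseteq\{j\mid s_j^*=0\}$, the first set indexes columns of rank $m+n-1$, and the second indexes linearly independent columns, so the two sets coincide.
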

\begin{proof}
Since $y^{(k)}\to s^*/\rho^*$ and $z^{(k)}\to x^*$, we have
$Y^{(k)}\to Y^*$ and $Z^{(k)}\to Z^*$. Moreover, strict
complementarity and $\rho^*>0$ ensure that $Z^*+Y^*$ is positive definite.
By the continuity of matrix inversion on the set of
nonsingular matrices, we have $H^{(k)}\to H^*$. It suffices to prove that the homogeneous system
\begin{equation*}
        A_\N^\top d_\lambda+d_s=0,\quad A_\N d_x=0,\quad Y^*d_x+\frac{1}{\rho^*}Z^*d_s=0
\end{equation*}
has only the zero solution. The first equality gives that
$d_s=-A_\N^\top d_\lambda$. By the third equality and strict
complementarity, $(d_x)_j=0$ whenever $s_j^*>0$, whereas
$(d_s)_j=0$ whenever $x_j^*>0$. It follows that $Y^*d_x=0$, $Z^*d_s=0$, and $Z^*A_\N^\top d_\lambda=0$. The primal constraint nondegeneracy condition implies that
$Z^*A_\N^\top$ has full column rank. Hence,
$d_\lambda=0$, and the first equality yields $d_s=0$. Let $S^*=\mat_\N(s^*)$. The dual constraint nondegeneracy condition implies that $A_{\N\setminus\spt_\G(S^*)}$ has full column rank $\cN-|\spt(s^*)|$.
 Since $A_\N d_x=0$ and $(d_x)_j=0$ when $s_j^*>0$, it follows
that $d_x=0$. Thus, $H^*$ is nonsingular.
\qed
\end{proof}

We are now ready to state the local quadratic convergence result for
Algorithm~\ref{IPRM}. The proof is deferred to Appendix~\ref{proof:theorem-g}.
\begin{theorem}\label{quadratic}
Let $(0,\omega^*)$ be a cluster point of the sequence $\{(\mu^{(k)},\omega^{(k)})\}$ generated by Algorithm \ref{IPRM}.
If the primal and dual constraint nondegeneracy conditions hold at $\omega^*$, then the entire sequence $\{(\mu^{(k)}, \omega^{(k)})\}$ converges to $(0,\omega^*)$ with local quadratic convergence.
\end{theorem}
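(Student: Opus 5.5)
The plan is to treat Algorithm~\ref{IPRM} near $(0,\omega^*)$ as an exact Newton method on the smooth square system $\Psi_{(\rho,\gamma)}(\mu,\omega)=0$ in \eqref{newton}. The argument has four steps: restrict to a neighborhood, show that the Newton matrix is uniformly invertible there, show that full steps are eventually accepted, and then invoke the classical quadratic estimate.

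\textbf{Step 1 (localizing and freezing parameters).} Let $(0,\omega^*)$ be the cluster point along a subsequence $\mathcal K$.
\begin{itemize}
\item Since $(0,\omega^*)$ is a cluster point, we are in case (ii) of Theorem~\ref{Hoffman}. I will assume, as in Remark~\ref{theorem3}, that $\{\rho^{(k)}\}$ is bounded; this is what Theorem~\ref{Hoffman} needs.
\item Theorem~\ref{Hoffman} then gives $\mu^{(k)}\to0$, $\phi^{(k)}\to0$, and $\gamma_0\phi^{(k)}<\mu^{(k)}<\eta\phi^{(k)}$ for $k\ge k_1$. Hence $\gamma=\gamma_0$ in Step~1 for all large $k$.
\item The penalty $\rho^{(k)}$ is nondecreasing and bounded, so $\rho^{(k)}\to\rho^*$.
\item Step~4 increases $\rho$ only when $\sigma\|s\|_\infty/\max\{\|x\|,1\}>\rho^{(k)}$. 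Near $\omega^*$ this ratio converges to a fixed value, so I will argue that $\rho$ stabilizes, $\rho^{(k)}=\rho^*$, for large $k$ in $\mathcal K$. If it does not, I will carry $\rho$ as a slowly varying parameter; \eqref{partial2rho} controls its effect on $\xi$.
\item Once $\rho$ and $\gamma$ are frozen, the iteration is a damped Newton step for the fixed map $\Psi_{(\rho^*,\gamma_0)}$ on $\mathbb{R}\times\F$.
\end{itemize}

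\textbf{Step 2 (nonsingularity and smoothness near the solution).}
\begin{itemize}
\item The primal and dual constraint nondegeneracy conditions give strict complementarity at $\omega^*$. Therefore $z$ and $y$ are $C^2$ in $(x,s,\mu)$ near $(x^*,s^*,0)$.
\item Concretely, $(s_j/\rho-x_j)^2$ is bounded away from zero, so the square roots in \eqref{definez}--\eqref{definey} are smooth even at $\mu=0$.
\item Consequently $\Psi$, and the full Jacobian including the $\mu$-row, are locally Lipschitz.
\item The $\mu$-row of the Jacobian is $(1+\gamma\partial_\mu\phi,\gamma\nabla_\omega\phi)$. Since $\phi=\tfrac12\|\xi\|^2$ and $\xi\to0$, this row tends to $(1,0)$.
\item The Jacobian is therefore block triangular in the limit, and its nonsingularity reduces to that of $H^*$, which Lemma~\ref{nonsingular} provides.
\item By continuity there are a ball $B$ around $(0,\omega^*)$ and a constant $\kappa$ with $\|J^{-1}\|\le\kappa$ on $B$, restricted to the affine space $\F$. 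Standard Newton theory \cite{qi1993nonsmooth} then gives $\|(\mu^+,\omega^+)-(0,\omega^*)\|\le L\|(\mu,\omega)-(0,\omega^*)\|^2$ for full steps taken from points in $B$.
\end{itemize}

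\textbf{Step 3 (acceptance of the unit step and inactivity of Step~5).} This is the step I expect to be the main obstacle.
\begin{itemize}
\item To show $\alpha^{(k)}=1$ satisfies \eqref{ineq}, I use that the Newton equation sets the linearization of $\xi$ to zero, after accounting for $\Delta\mu$. This gives $\phi(\text{new})=O(\|\Delta\|^4)$.
\item Since $\mu\approx\phi$ and $\mu$ is comparable to the error, $\phi^{(k)}$ is of the order of the squared distance. I will try to show $\phi^{(k)}\ge c\|\xi^{(k)}\|^2$ and $\|\xi\|\gtrsim$ dist, using the error bound \eqref{Errobound} in the reverse direction. This yields $\phi(\text{new})\le(1-2\tau)\phi^{(k)}$ for $\tau<1/2$.
\item For Step~5 I must show that the $\mu$-reduction loop either does nothing or shrinks $\mu$ in a way that only helps. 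Reducing $\mu$ moves the iterate toward $(0,\omega^*)$ in the $\mu$ coordinate. Lemma~\ref{z1-z2} shows the induced change in $z$ is $O(\sqrt{\mu})$, so I need care that it does not spoil the quadratic estimate. I would bound it through $\phi$, using $\mu<\eta\phi$ after the loop.
\end{itemize}

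\textbf{Step 4 (conclusion).} Once an iterate in $\mathcal K$ enters a small enough sub-ball $B'\subset B$, the quadratic contraction keeps all subsequent iterates inside $B'$. By induction the whole sequence converges to $(0,\omega^*)$ quadratically.
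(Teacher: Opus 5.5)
Your proposal is correct and shares the paper's skeleton. Both arguments use smoothness of $\xi$ from strict complementarity, nonsingularity of $H^*$ via Lemma~\ref{nonsingular}, a quadratic estimate for the full step, acceptance of $\alpha^{(k)}=1$ because the new merit value is $O((\phi^{(k)})^2)$, and induction. Two sub-steps are organized differently.

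\textbf{Joint Newton map.} You apply Newton theory to the joint map $\Psi_{(\rho,\gamma_0)}$ in $(\mu,\omega)$. This is legitimate. Substituting the $\xi$-rows of \eqref{linearsystem} gives $\nabla\phi\cdot(\Delta\mu,\Delta\omega)=-\|\xi\|^2=-2\phi$, so the Newton row for $\mu+\gamma\phi=0$ reduces exactly to $\Delta\mu=-\mu+\gamma\phi$. In your framework the $\mu$-coupling is absorbed by the Jacobian, and the contraction itself needs no relation between $\mu$ and $\phi$. The paper instead uses only $H^{(k)}$. It must therefore show $\mu^{(k)}=O(\|\omega^{(k)}-\omega^*\|^2)$ from $\mu^{(k)}<\eta\phi^{(k)}$ in order to control the $\mu$-term in $\Theta^{(k)}$.

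\textbf{Lower bound on $\|\xi^{(k)}\|$.} You bound the error by $\|\xi^{(k)}\|$ using \eqref{Errobound} together with $\mu^{(k)}<\eta\phi^{(k)}$. This works once you add two facts: the two nondegeneracy conditions make $\mathcal{S}(A_{\mathcal N},b,c_{\mathcal N})=\{(x^*,\lambda^*)\}$, and $s-s^*=-A_{\mathcal N}^\top(\lambda-\lambda^*)$ on $\mathcal{F}$. The paper gets $\|\omega^{(k)}-\omega^*\|=O(\|\xi^{(k)}\|)$ purely locally, from the uniform bound on $(H^{(k)})^{-1}$, without any global Hoffman constant.

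\textbf{Simplifications.} Two of your worries can be dropped. First, Step~5 leaves $\omega$ fixed and only lowers $\mu$, so the $(\mu,\omega)$-error can only decrease. The relation $\gamma_0\phi<\mu<\eta\phi$ after Step~5 is already supplied by Theorem~\ref{Hoffman} for $k\ge k_1$, so Lemma~\ref{z1-z2} is not needed. Second, $\rho^{(k)}$ need not stabilize, since the limit of the ratio in Step~4 may equal $\rho^*$. Your fallback suffices: $\Psi_{(\rho,\gamma_0)}(0,\omega^*)=0$ for every $\rho>0$, and all constants are uniform for $\rho$ in a compact interval around $\rho^*$.
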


Although the primal and dual constraint nondegeneracy assumptions are relatively strong, the construction of the multiscale framework provides a natural setting in which they can be expected to hold. In our numerical experiments, we indeed observe local quadratic convergence.

\section{Numerical Results} \label{numerical results}
In this section, we compare MSIPRM with several state-of-the-art methods on the DOTmark dataset\footnote{https://www.stochastik.math.uni-goettingen.de/index.php?id=215/}. It consists of 10 image classes, each comprising 10 images
available at five resolutions ranging from $32\times 32$ to
$512\times 512$. To assess the performance of the methods on larger
problems, we additionally construct instances at resolutions
$1024\times 1024$ and $2048\times 2048$.
MSIPRM is implemented in MATLAB R2024b, and all experiments are performed on a machine equipped with an Intel Core i7-14700 processor and 64 GB of RAM.
    
  \subsection{Experimental Setup}
In our experiments, we solve OT problems between pairs of images of resolution $\texttt{row}\times\texttt{col}$, where $\texttt{row}=\texttt{col}$ ranges from $32$ to $2048$. The cost function is given by the squared Euclidean distance
between pixel locations. Specifically, for pixel locations $u_i=(k_i,l_i)$ and $v_j=(p_j,q_j)$, we define
 \[C_{ij}=h(u_i,v_j)=\|(k_i,l_i)-(p_j,q_j)\|^2=(k_i-p_j)^2+(l_i-q_j)^2.\]
For each image pair, we normalize the pixel intensities so that each image has unit total mass, thereby obtaining two discrete measures $\bm\mu,\bm\nu\in\mathbb{R}_+^{\texttt{row}\cdot\texttt{col}}$. 
Table~\ref{basicdata} reports the dimensions of representative
instances and illustrates the large scale of the resulting OT
problems.

To evaluate the computational performance of MSIPRM, we compare it with the following four methods.

\begin{itemize}
    \item \textbf{HOT} is a recently proposed fast method for discrete OT problems based on an equivalent reduced formulation \cite{zhang2025hot}. We directly use the Python code provided by the authors.

    \item \textbf{MCPLEX-NS} is a multiscale method that employs the network simplex solver in CPLEX (version 12.10.0.0) to solve each subproblem. It uses the same multiscale framework as MSIPRM and is called from MATLAB.

    \item \textbf{Sp-Sinkhorn}, proposed by Schmitzer~\cite{schmitzer2019stabilized}, is a sparse algorithm combining an $\epsilon$-scaling with a multiscale strategy for entropically regularized OT. We use the publicly available C code with a Python interface\footnote{{https://bernhard-schmitzer.github.io/MultiScaleOT/build/html/index.html}}.

    \item \textbf{LEMON-NS} is the network simplex solver provided by the C++ network optimization library LEMON (version 1.3.1)\footnote{{https://lemon.cs.elte.hu/}}. It solves the discrete OT problem through the equivalent minimum-cost flow formulation.
\end{itemize}

For the outer iterations of MSIPRM, we use a stopping criterion
based on the KKT residual. At each level, an approximate solution to the corresponding OT problem is accepted when
\[\textup{KKT}_{\textup{res,out}}:=\max\left\{\frac{\|\bar Ax-\bar b\|}{1+\|\bar b\|},\,\frac{\|(c-\bar A^\top\lambda)_-\|}{1+\|c\|},\,\frac{|c^\top x-\bar b^\top\lambda |}{1+|c^\top x|+|\bar b^\top \lambda|}\right\}\leq\texttt{tol}_{\textup{out}},\]
where the tolerance $\texttt{tol}_{\textup{out}}$ is set to 1E{-6} unless specified otherwise. 

For each inner subproblem, we terminate IPRM when the corresponding inner KKT residual, barrier parameter, and merit function satisfy
\[\max\{\textup{KKT}_{\textup{res,in}},\,\mu_{\textup{in}},\,\phi_{\textup{in}}\}\leq\texttt{tol}_{\textup{in}},\]
where $\texttt{tol}_{\textup{in}}$ is increased from 1E-8 to 1E-6, depending on the current resolution level. MCPLEX-NS shares an identical termination criterion with MSIPRM for the outer iterations. The stopping tolerance is set to 1E-6 for the other solvers except LEMON-NS, which is an exact algorithm for OT problems.

\subsection{Impact of the Multiscale Strategy}
The numerical results reported in \cite{liu2022multiscale} indicate that running times can vary substantially across image classes, even at the same resolution. The slowest instances may take several times as long as the fastest ones.
To reduce the influence of unusually sparse marginals when assessing the effect of image resolution, we exclude the MicroscopyImages and Shapes classes. Images in these
classes contain many zero-valued pixels, which lead to smaller problem sizes after zero-mass support points are removed.

We first compare IPRM directly with MSIPRM to isolate the effect of
the multiscale strategy. For each resolution, we randomly select five image pairs from each of the remaining DOTmark classes and run both methods on the same instances.
We report the average running time, total iteration count, KKT residual, and memory occupied by the transport variables. The results are presented in Table~\ref{withwithout}.

\begin{table}[H]
\centering
\captionsetup{font=small}
\caption{Comparison between IPRM and MSIPRM. OoM means out of memory}
\resizebox{0.9\textwidth}{!}{ 
    \footnotesize 
    \begin{tabular}{c |c c c c | c c c c}
        \toprule 
      \multirow{2}{*}{Resolution} & \multicolumn{4}{c|}{IPRM} & \multicolumn{4}{c}{MSIPRM} \\
        \cline{2-5} \cline{6-9}
                      & time (s) &  Iter. & $\textup{KKT}_{\textup{res, out}}$& Memory & time (s)& Iter. & $\textup{KKT}_{\textup{res, out}}$ &Memory\\
        \hline 
        $32\times 32$ &   8.53  &  67  &  4.3E-08 & 8 MB & 0.62   &145  &  3.3E-08 & 0.1 MB\\
        $64\times64$  &   90.74 &  80  &  2.3E-07 &128 MB & 3.11   &250  &  3.7E-07 & 0.5 MB\\
        $128\times128$& 2790.78 &  140 &  3.6E-08 & 2 GB  & 18.14  &324  &  1.8E-07 & 2.1 MB\\
        $256\times256$&        &  OoM &          & 32 GB & 53.26  &403  &  6.8E-07 & 8.5 MB\\
        \bottomrule 
    \end{tabular}   
}
\label{withwithout}
\end{table}

Table~\ref{withwithout} shows that the multiscale strategy substantially reduces both the running time and the storage required for the transport variables. Although MSIPRM solves a sequence of sparse subproblems and therefore requires more inner iterations in total, each subproblem is much smaller than the original full problem. As a result, the additional iterations do not lead to a higher computational cost. More importantly, the multiscale framework makes the computation feasible at scales where the full formulation is already impractical. In particular, IPRM applied directly to the full problem runs out of
memory at a resolution of $256\times 256$.
At the same resolution, MSIPRM solves the tested instances in
53.26 seconds on average and requires only 8.5~MB to store the
transport variables.

\subsection{Comparison with Other Solvers on the DOTmark Dataset}
We next compare MSIPRM with the four competing solvers introduced
above on the DOTmark dataset. To summarize running times over multiple instances, we report the shifted geometric mean (SGM), defined by
\[
\operatorname{SGM}
=
\left(\prod_{i=1}^N (t_i+\texttt{sh})\right)^{1/N}
-\texttt{sh}.
\]
Here, $N$ is the number of test instances at a given resolution, and $t_i$ denotes the running time for the $i$-th instance. 
We set $\texttt{sh}=0.1$ for resolutions $32\times 32$ and
$64\times 64$, $\texttt{sh}=1$ for $128\times 128$ and
$256\times 256$, and $\texttt{sh}=10$ for larger resolutions. For resolutions of $32\times 32$ to $512\times 512$, the time limits are set to 10, 100, 1000, 4000 and 40000 seconds, respectively. An instance is regarded as unsolved if a solver fails to meet the
prescribed accuracy requirement, exceeds the time limit, or
terminates with a numerical error. For each such instance, $t_i$ is
set to the corresponding time limit. At each resolution, the SGM of each solver is divided by the
smallest SGM among all solvers, so that the best normalized SGM is $1.00$.

These solvers provide complementary baselines for three reasons. First, Sp-Sinkhorn and MCPLEX-NS also use a multiscale framework, so they show whether IPRM is well suited to this framework. Second, HOT is a recent method based on a different reduced formulation of discrete OT and provides an alternative modern baseline. Third, LEMON-NS is a classical network simplex solver for the equivalent minimum-cost flow formulation and serves as a standard reference.
\begin{table}
\captionsetup{font=small}
\caption{Computational results for image resolutions ranging from $32\times32$ to $512\times512$. Each SGM entry is reported as normalized SGM/SGM time. Pinf and Gap\protect\footnotemark{} denote the average relative primal infeasibility and the average relative objective gap, respectively.
``$-$'' indicates that Pinf is numerically negligible, and OoM means out of memory}  
\resizebox{\textwidth}{!}{ 
    \footnotesize 
    \centering 
    \begin{tabular}{c c c c c c c} 
        \toprule 
        Resolution &  & \textbf{MSIPRM} & HOT & MCPLEX-NS & Sp-Sinkhorn & LEMON-NS \\ 
        \midrule 
\multirow{3}{*}{$32\times32$} 
        & SGM & {1.34/0.55s} & 6.54/2.68s & 3.27/1.34s & 1.68/0.69s & \textbf{1.00/0.41s} \\ 
        & Pinf\tnote{1} & 1.6E-08 & 1.0E-06 & $-$ & 3.5E-08 & $-$ \\ 
        & Gap & 3.7E-08 & 2.4E-05 & 9.7E-13 & 2.7E-03 & 1.5E-13\\ 
        \midrule 
       \multirow{3}{*}{$64\times64$} 
        & SGM & \textbf{1.00/3.05s} & 2.52/7.70s & 1.42/4.33s & 3.95/12.05s & 6.13/18.69s \\ 
         & Pinf &  2.9E-08 & 2.5E-06 & $-$  & 2.2E-08 & $-$\\ 
        & Gap & 6.6E-07 & 7.0E-05 & 4.3E-06 & 1.0E-03 & 1.0E-13\\ 
        \midrule 
        \multirow{3}{*}{$128\times128$} 
        & SGM & \textbf{1.00/10.54s} & 12.67/133.52s & 1.94/20.49s & 17.91/188.67s & {88.62/934.02s} \\ 
          & Pinf &  1.3E-06  & 4.1E-06 & $-$ & 1.1E-08 & $-$ \\ 
        & Gap &  1.7E-05 & 1.2E-04 &  1.2E-04 & 2.8E-04 & 1.8E-12  \\ 
            \midrule 
       \multirow{4}{*}{$256\times256$} 
        & Solved & {100\%} & 100\%   &  100\%   & 74\%  & \multirow{2}{*}{OoM} \\ 
        & SGM    & \textbf{1.00/51.79s}  & 32.32/1673.69s    &  3.39/175.67s       &   54.73/2834.59s&                      \\ 
        & Pinf   & 7.2E-06 & 9.4E-06 &   $-$    &  2.4E-06  & \multirow{2}{*}{OoM} \\ 
        & Gap    & 8.2E-05 & 2.7E-04 & 2.2E-04  &  1.1E-03  &                      \\ 
          \midrule
\multirow{4}{*}{$512\times512$}
& Solved&  100\% & 100\%&  100\% &  70\%& \multirow{2}{*}{OoM} \\
&  SGM  &  \textbf{1.00/107.17s}  & 145.08/15547.68s &   14.70/1575.76s   &  272.58/29211.89s   &  \\
&  Pinf & 1.6E-05 &  6.7E-06 &  $-$  &1.2E-05 &  \multirow{2}{*}{OoM}\\
&  Gap  & 1.6E-04 &  1.1E-04 &5.7E-04&8.8E-04&  \\
        \bottomrule 
    \end{tabular}   } 
    \label{table32-512}
\end{table}
\footnotetext{Since the stopping criteria of these solvers are different, we follow the comparison methodology in \cite{zhang2025hot}
and use
$\operatorname{Pinf}
:=\max\left\{
\frac{\|\min(x,0)\|}{1+\|x\|},
\frac{\|\bar A x-\bar b\|}{1+\|\bar b\|}
\right\}
$ and $
\operatorname{Gap}:=\frac{|c^\top x-c^\top x_r|}{1+|c^\top x_r|},
$
to evaluate the quality of the solutions, 
where $x_r$ is a high-accuracy reference solution computed by MCPLEX-NS with $\texttt{tol}_{\textup{out}}$=1E-10.}

\begin{figure}
    \centering 
    \captionsetup{font=small}
    \begin{subfigure}[t]{0.48\textwidth} 
        \centering 
        \includegraphics[width=\linewidth, keepaspectratio]{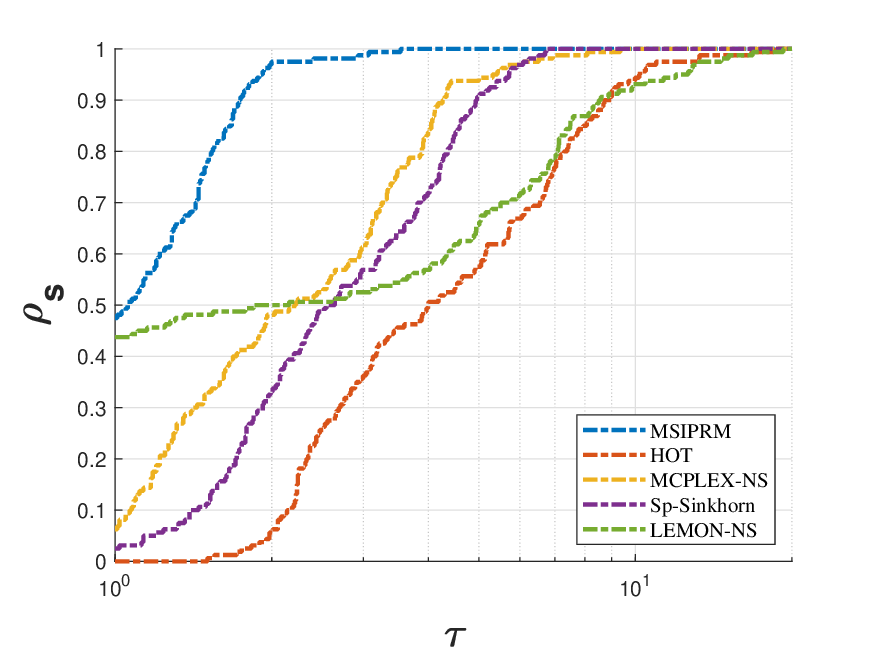} 
        \caption{\,Resolutions of $32\times32$ and $64\times64$} 
        \label{subfig:3264} 
    \end{subfigure} 
    \hfill 
    \begin{subfigure}[t]{0.48\textwidth}  
        \centering 
        \includegraphics[width=\linewidth, keepaspectratio]{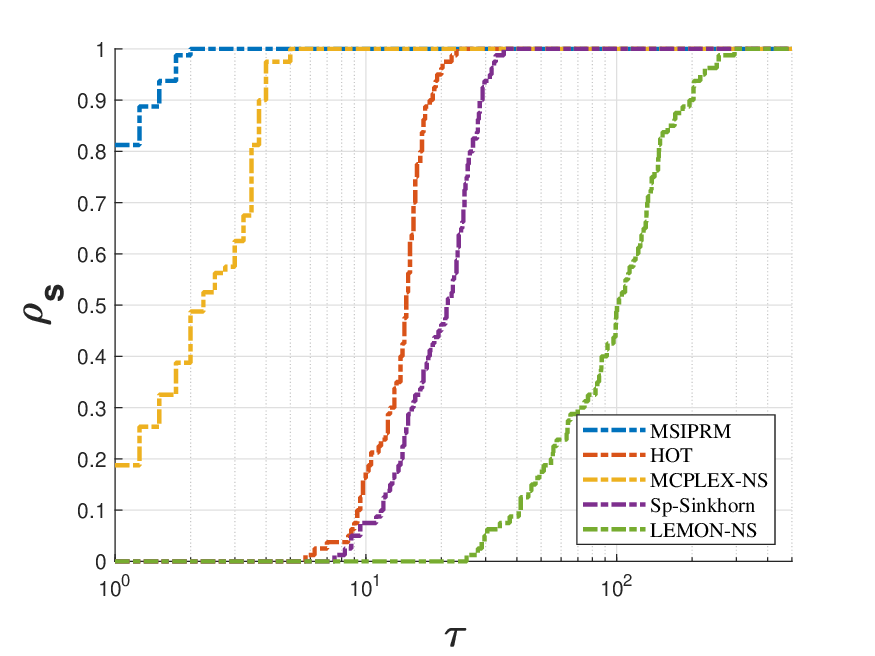} 
        \caption{\,Resolution of $128\times 128$} 
        \label{subfig:128} 
    \end{subfigure} 
    \vspace{5pt}  
    \begin{subfigure}[t]{0.48\textwidth} 
        \centering 
        \includegraphics[width=\linewidth, keepaspectratio]{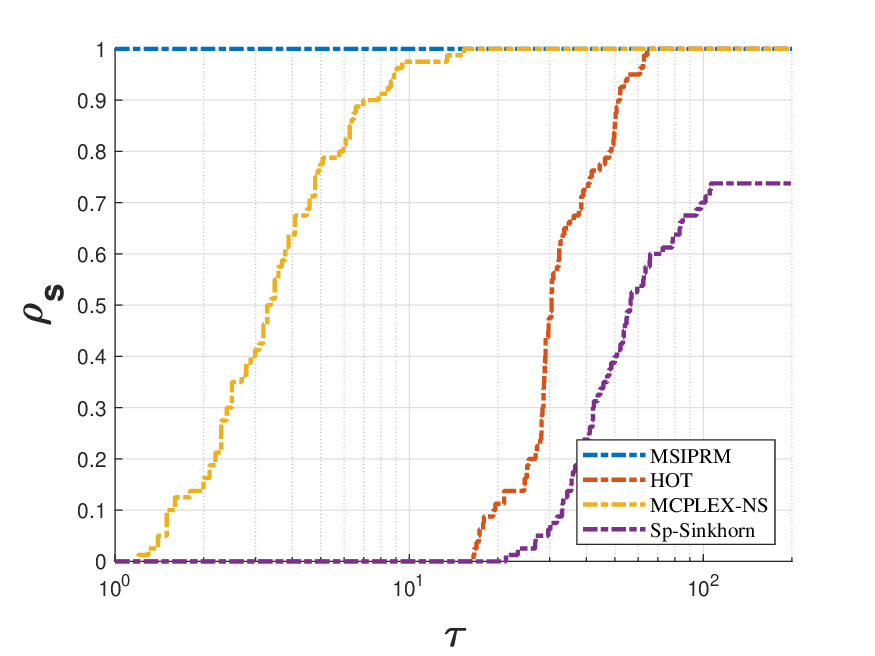} 
        \caption{\,Resolution of $256\times 256$} 
        \label{subfig:256} 
    \end{subfigure} 
    \hfill 
    \begin{subfigure}[t]{0.48\textwidth}  
        \centering 
        \includegraphics[width=\linewidth, keepaspectratio]{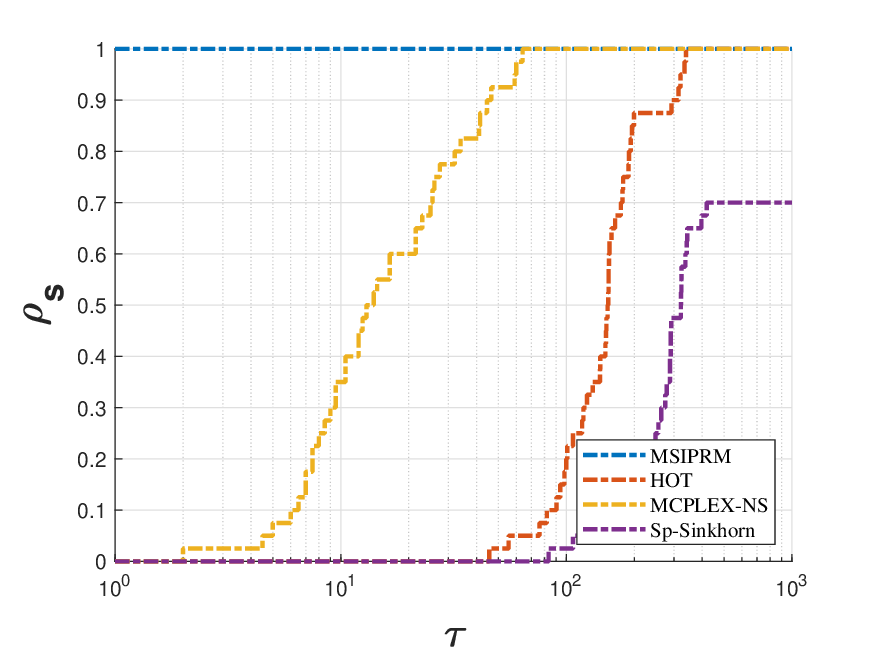} 
        \caption{\,Resolution of $512\times 512$} 
        \label{subfig:512} 
    \end{subfigure} 
    \caption{Running-time performance profiles for OT problems at different image resolutions
    }  
    \label{fig:32to512} 
\end{figure} 

Table~\ref{table32-512} reports the detailed numerical results, while Figure~\ref{fig:32to512} gives the corresponding performance profiles. Because all solvers successfully solved all instances at resolutions from $32\times32$ to $128\times128$, we omit the solved percentages from Table~\ref{table32-512}. At $32\times 32$, LEMON-NS is the fastest solver, with an SGM time of 0.41 seconds, while MSIPRM is already competitive at 0.55 seconds. This is not surprising, since for very small instances, the benefits of the multiscale strategy do
not yet offset its overhead, whereas a highly optimized network
simplex implementation remains very efficient. Starting at $64\times 64$, however, MSIPRM becomes the fastest method, and its advantage increases steadily with the problem size.
At $256\times256$ and $512\times512$, MSIPRM solves all instances and has substantially smaller SGM times than the competing methods. HOT and MCPLEX-NS also solve all instances but require considerably more time. Sp-Sinkhorn solves only $74\%$ and $70\%$ of the instances, respectively, while LEMON-NS runs out of memory.
In particular, LEMON-NS is very efficient only at the smallest resolution, but its memory requirements grow rapidly afterward. HOT remains robust in terms of success rate, but its running time becomes much larger than that of MSIPRM. Among the methods employing multiscale strategies, MSIPRM is consistently better than both MCPLEX-NS and Sp-Sinkhorn once the problem is no longer very small. These results suggest that IPRM is especially suitable as the subproblem solver in the proposed multiscale framework.

Figure~\ref{fig:32to512} illustrates this trend more clearly. The curve closer to the upper-left corner indicates better performance. At the lower resolutions, MSIPRM is already among the best methods, although LEMON-NS still has an advantage at $32\times 32$. At $128\times 128$, the MSIPRM curve is clearly more favorable than the others. This pattern becomes even stronger for $256\times 256$ and $512\times 512$ instances. The MSIPRM profile rises faster and remains above the other profiles. This indicates that MSIPRM solves a larger portion of the test problems within a smaller factor of the best running time. These profiles reinforce the conclusions drawn from Table~\ref{table32-512}.

\subsection{Numerical Performance on Grids of $1024\times1024$ and $2048\times2048$}
The DOTmark dataset provides instances only up to a resolution of
$512\times 512$ and therefore does not fully demonstrate the
large-scale capabilities of the proposed method. We perform additional tests on $1024\times 1024$ and $2048\times 2048$ grids. The full OT problems
contain approximately 1.1E+12 and 1.8E+13 variables, respectively.

We first construct five test measures by discretizing classical
two-dimensional probability distributions on a
$2048\times 2048$ grid. The test set includes unimodal, bimodal, and mixed-type distributions, producing a variety of transport patterns. 
As illustrated in Figure~\ref{fig:test-measures}, these measures
exhibit distinct geometric structures. This diversity makes the test set suitable for evaluating the robustness of MSIPRM across different measure pairs. We then downsample these measures to a $1024\times 1024$ grid and compute the K--W distance for each of the ten distinct measure pairs. The preceding DOTmark experiments indicate MCPLEX-NS as the strongest
multiscale competitor on the DOTmark dataset. We therefore focus the following comparison on MCPLEX-NS. The results are reported in Table~\ref{tab:KW-1024}.
\begin{figure}
    \centering 
    \captionsetup{font=small}
    \begin{subfigure}[t]{0.192\textwidth} 
        \centering 
        \includegraphics[width=\linewidth, keepaspectratio]{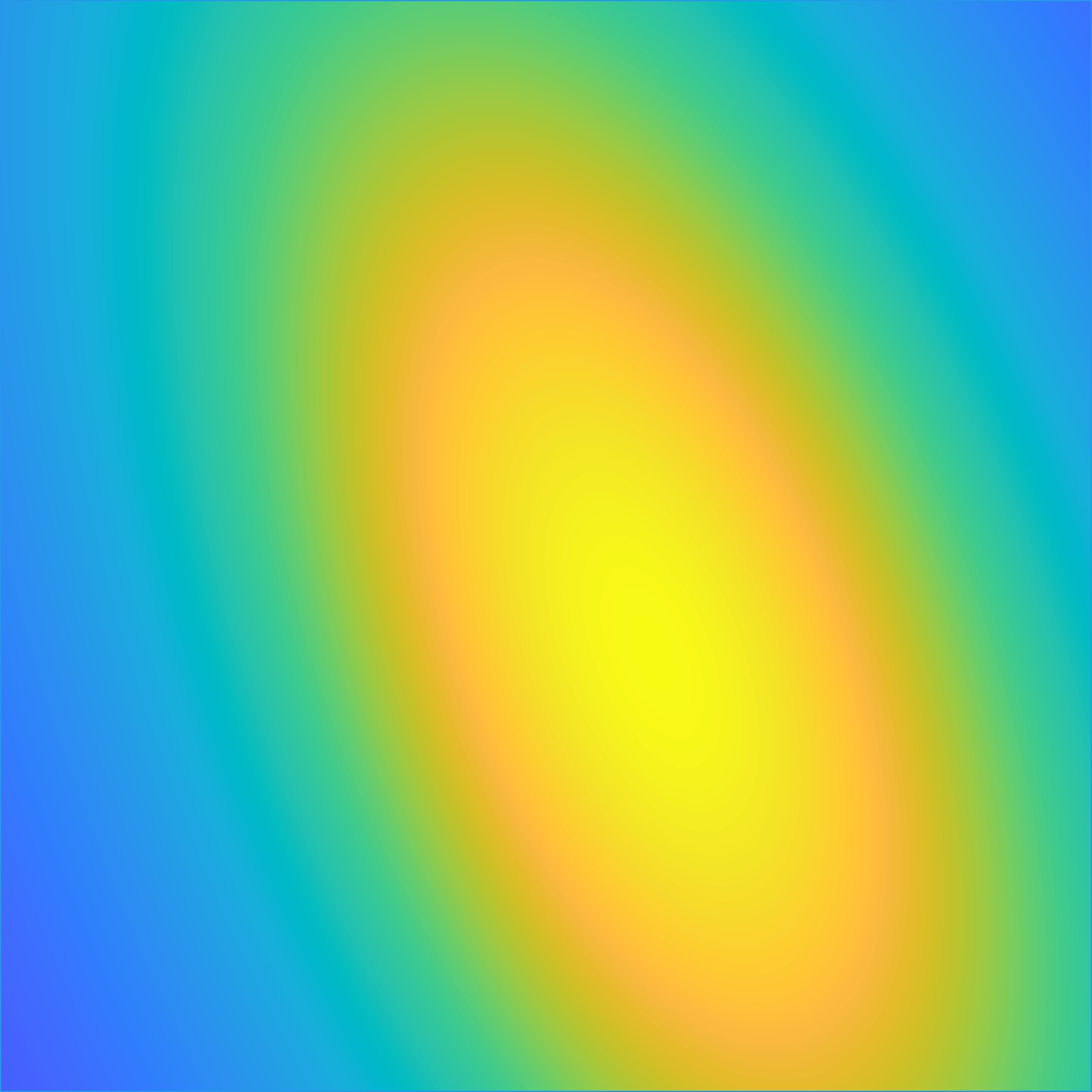} 
        \caption{ } 
        \label{subfig:cauchy} 
    \end{subfigure} 
    \hfill 
    \begin{subfigure}[t]{0.192\textwidth}  
        \centering 
        \includegraphics[width=\linewidth, keepaspectratio]{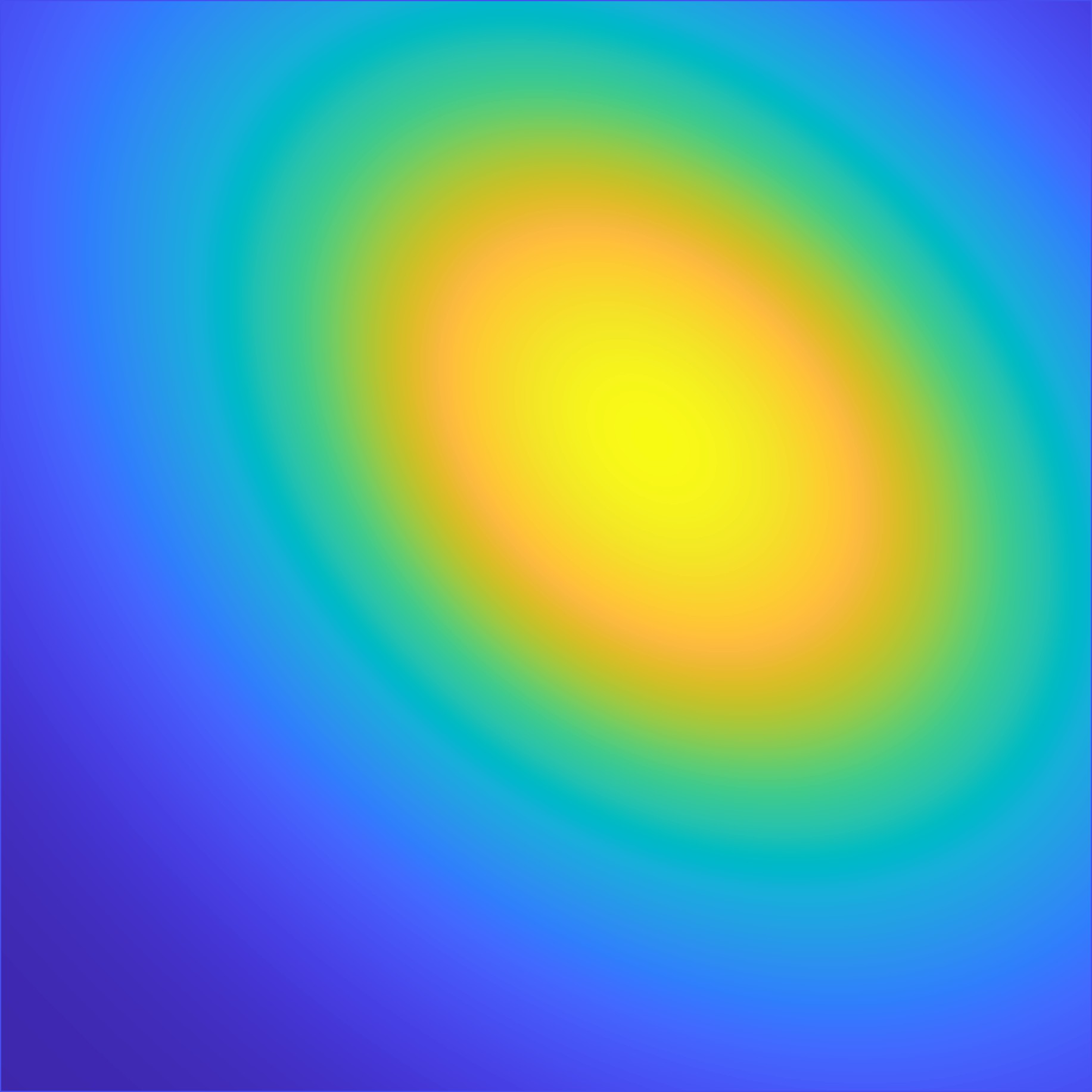} 
        \caption{ } 
        \label{subfig:gauss} 
    \end{subfigure} 
    \hfill 
    \begin{subfigure}[t]{0.192\textwidth}  
        \centering 
        \includegraphics[width=\linewidth, keepaspectratio]{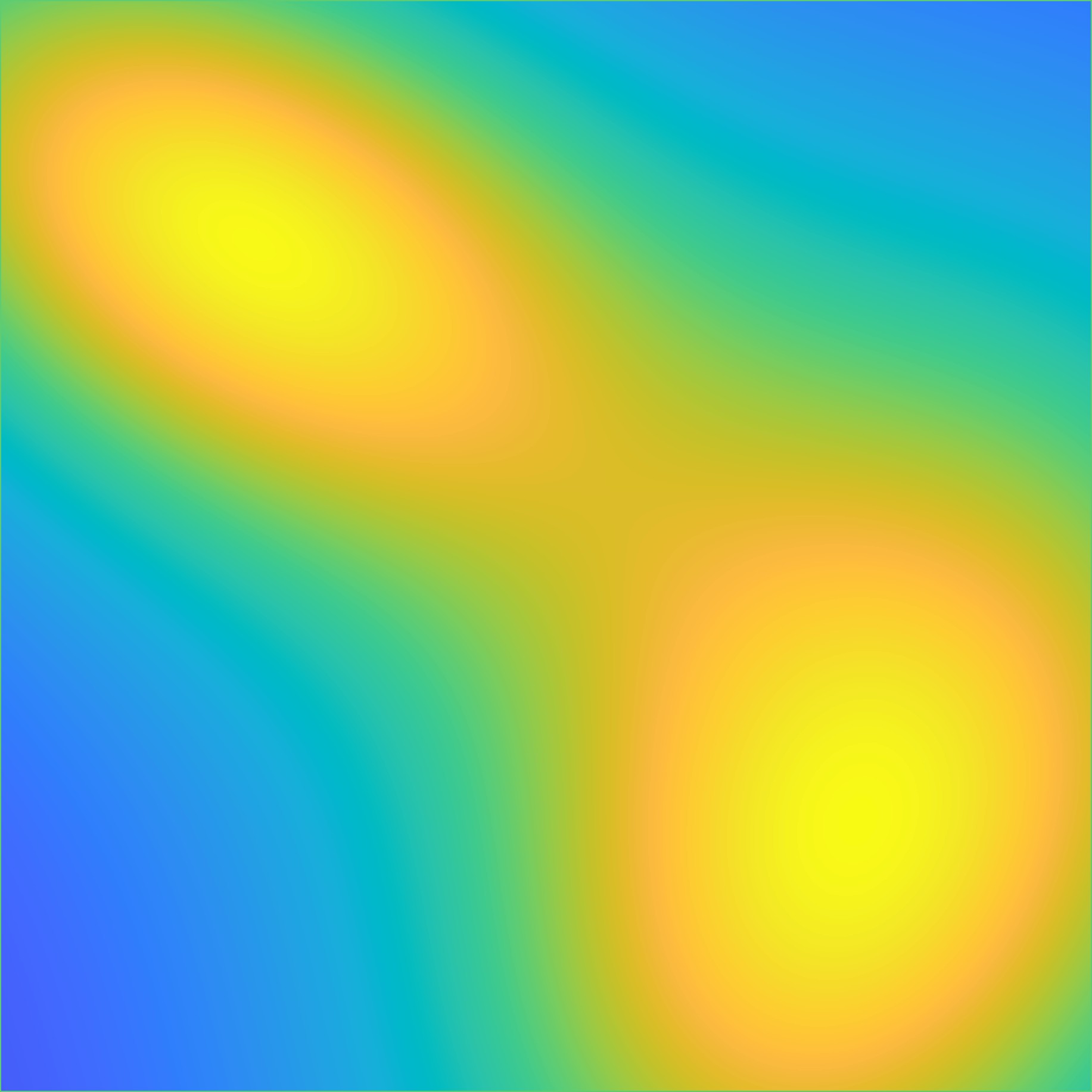} 
        \caption{ } 
        \label{subfig:dual-cauchy} 
    \end{subfigure} 
    \hfill 
    \begin{subfigure}[t]{0.192\textwidth}  
        \centering 
        \includegraphics[width=\linewidth, keepaspectratio]{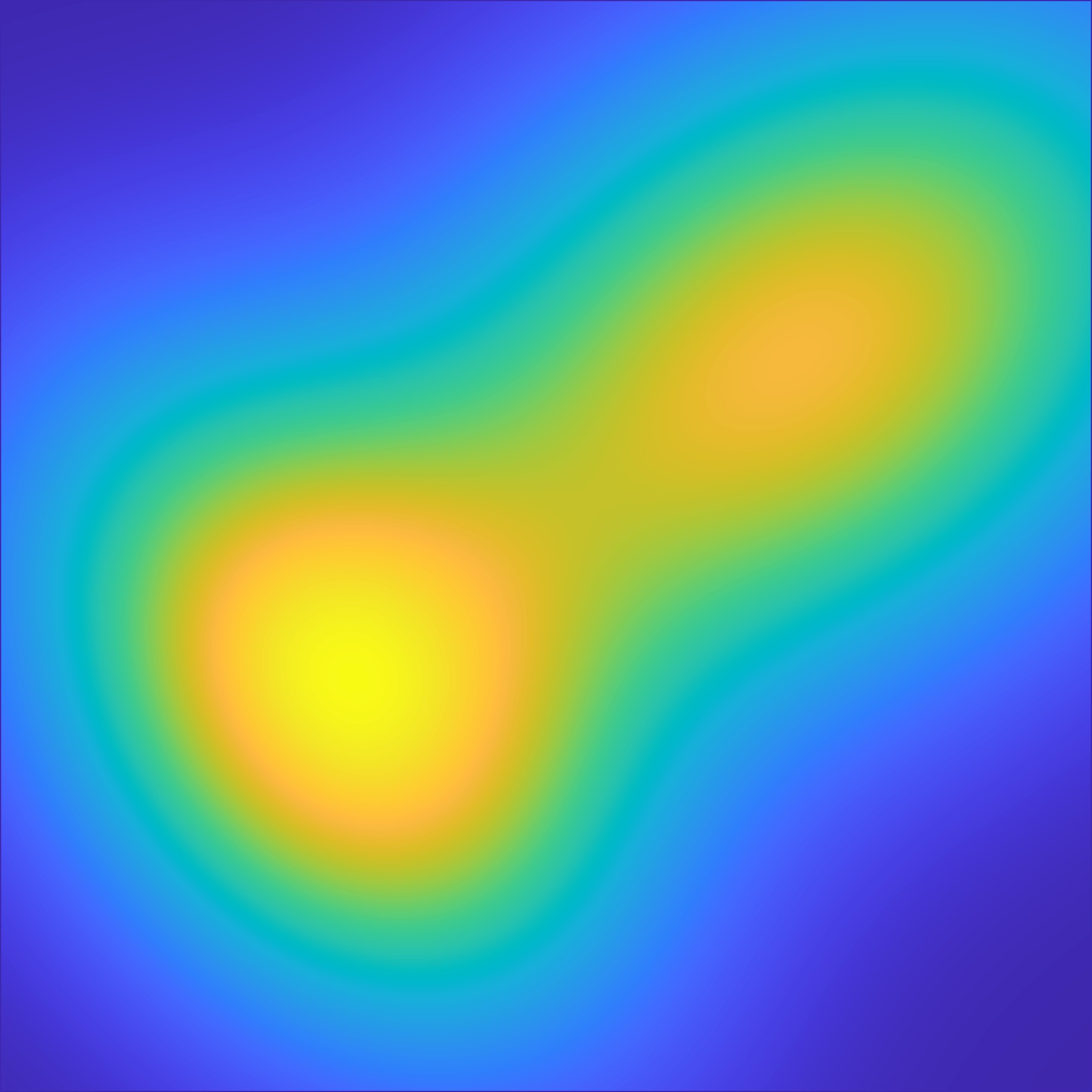} 
        \caption{ } 
        \label{subfig:dual-gauss} 
    \end{subfigure} 
    \hfill 
    \begin{subfigure}[t]{0.192\textwidth}  
        \centering 
        \includegraphics[width=\linewidth, keepaspectratio]{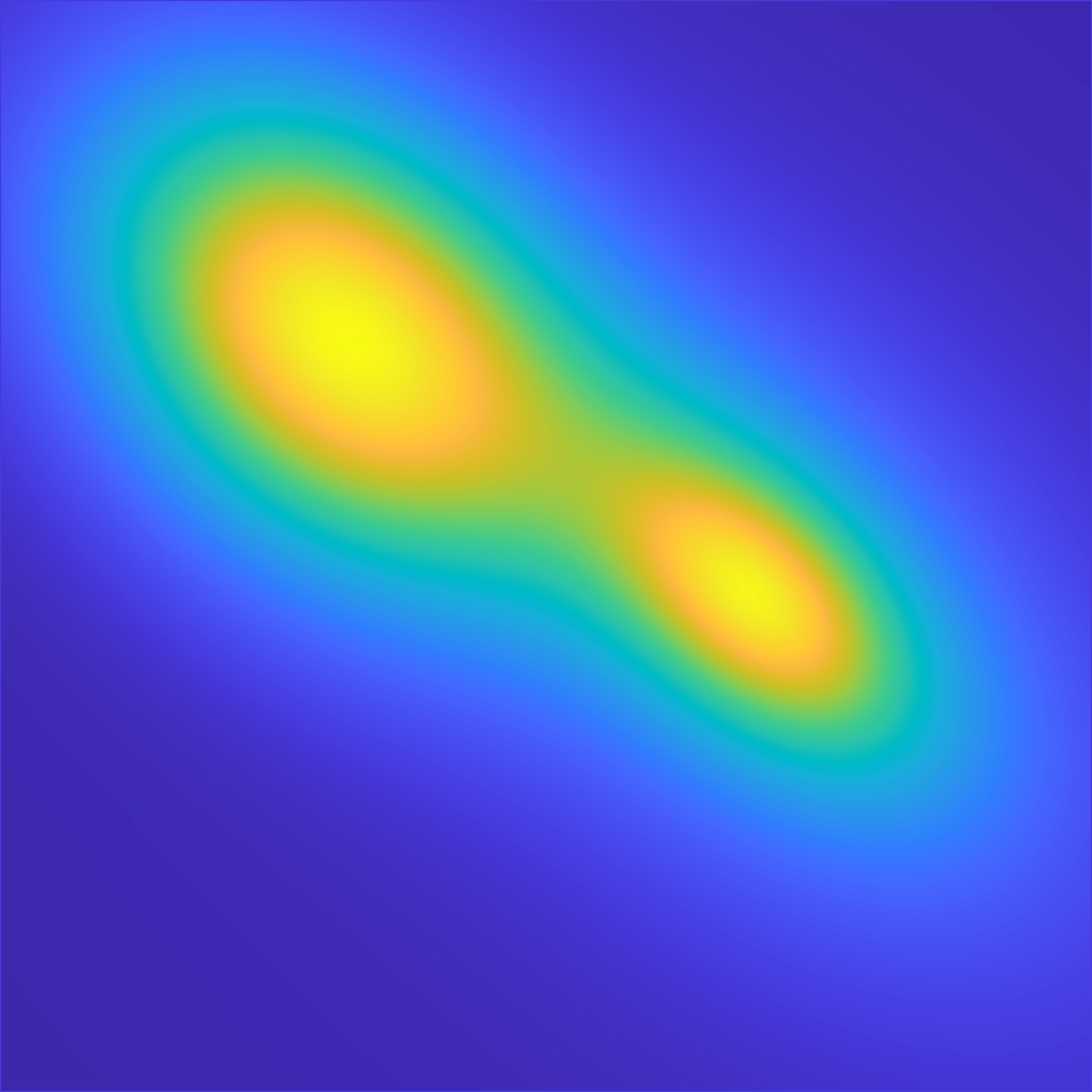} 
        \caption{ } 
        \label{subfig:cauchy-gauss} 
    \end{subfigure} 
    \caption{Test measures generated by discretizing two-dimensional
probability distributions on a $2048\times 2048$ grid. (a) a
    single Cauchy distribution; (b) a single Gaussian distribution; (c) two Cauchy distributions; (d)  two Gaussian distributions; (e) a mixture of a Cauchy distribution and a Gaussian distribution
    }
    \label{fig:test-measures} 
\end{figure}

\begin{table}
\centering
\captionsetup{font=small}
\caption{Numerical results for the ten pairs of test measures on a
$1024\times 1024$ grid. ``IPRM time'' and ``Network time'' denote
the cumulative times spent solving sparse subproblems with IPRM and
the CPLEX network simplex solver, respectively. ``Total time''
includes all components of the corresponding multiscale algorithm.
The symbol ``$\times $'' indicates that the instance is unsolved}
\resizebox{\textwidth}{!}{ 
    \footnotesize 
    \begin{tabular}{c |c c c c c | c c c}
        \toprule 
       \multirow{2}{*}{Instance}  & \multicolumn{5}{c|}{MSIPRM} &\multicolumn{3}{c}{MCPLEX-NS}\\
        \cline{2-9}
       & \makecell{IPRM/Total time (s)} & \makecell{Iter.}& $\textup{KKT}_\textup{res,out}$ & Pinf & K--W &  \makecell{Network/Total time (s)}& $\textup{KKT}_\textup{res,out}$ & K--W
       \\ [1.5ex] 
        \hline 
Test 1 -- Test 2 & 258.24/336.64 & 460 &8.3E-07&2.6E-06&103.32 &2756.41/2870.14&9.6E-07&103.33\\
Test 1 -- Test 3 & 313.49/431.11 & 601 &6.8E-07&5.4E-06& 47.65 &2683.64/2838.21&9.8E-07&47.67\\
Test 1 -- Test 4 & 278.90/602.26 & 467 &4.4E-07&4.2E-06&126.15 &3107.08/3542.92&6.7E-07&126.15\\
Test 1 -- Test 5 & 268.56/539.89 & 425 &4.0E-07&2.9E-06&162.57 &29287.06/31740.50&8.3E-07&162.58\\
Test 2 -- Test 3 & 292.90/380.49 & 519 &4.2E-07&2.3E-05&105.43 &2230.44/2354.85&5.6E-07&105.44\\
Test 2 -- Test 4 & 286.45/838.48 & 484 &4.1E-07&6.3E-06&140.02 &13573.05/15250.20&8.4E-07&140.02\\
Test 2 -- Test 5 & 438.29/693.06 & 489 &6.8E-07&2.7E-06&121.61 & $\times$ & $\times$ & $\times$\\
Test 3 -- Test 4 & 305.24/925.62 & 429 &3.1E-07&6.8E-09&149.21 &3549.03/4056.61&4.0E-07&149.21\\
Test 3 -- Test 5 & 326.13/593.74 & 409 &4.2E-07&2.0E-06&147.39 &2416.54/2695.94&7.9E-07&147.39\\
Test 4 -- Test 5 & 351.72/878.62 & 470 &3.0E-07&2.5E-06&190.73 &2555.96/3200.50&3.8E-07&190.73\\
        \bottomrule 
    \end{tabular}   
}
\label{tab:KW-1024}
\end{table}

MSIPRM solves all ten problems with small KKT residuals and low primal
infeasibility, whereas MCPLEX-NS fails on one problem. For the nine problems solved by both methods, the reported K--W values are nearly identical. On these problems, MCPLEX-NS takes more total time than MSIPRM. Furthermore, the time spent by CPLEX's network simplex solver is typically much longer than that spent by IPRM, often by an order of magnitude and sometimes more.
Overall, MSIPRM's total running times remain within a reasonable range at this scale, whereas MCPLEX-NS exhibits significantly longer running times on two instances. These results show that MSIPRM remains stable on trillion-variable OT problems and works well for different measure pairs.

Table~\ref{tab:KW-1024} also reveals how the total running time is
distributed. In addition to the time spent by IPRM solving the sparse subproblems, a nonnegligible portion of the total running time is spent elsewhere in MSIPRM. Since the full cost matrix is not stored explicitly,
most of this additional computational overhead is
attributable to the evaluations of
$c-\bar A^\top\lambda$, which are needed to update the active
set and assess dual infeasibility. In practice, the overhead is relatively small at resolutions below $512\times512$, but becomes substantial at $512\times512$ and above. As shown in Table~\ref{tab:KW-1024}, this overhead is comparable to the IPRM solution time for some instances.
This overhead is not a weakness of the multiscale strategy. It suggests that parallelizing the evaluation of $c-\bar A^\top\lambda$ could further improve the efficiency of MSIPRM. Finally, the discrepancy between the total running time and the cumulative
solver time provides a rough indication of the number of active-set
updates. The smaller difference observed for MSIPRM suggests fewer such
updates and, consequently, more accurate support predictions.
The following scaling experiment examines this behavior more directly.

\begin{table}
\centering
\captionsetup{font=small}
\caption{Numerical results for the K--W distance between Test measure 2 and Test measure 5 at grid resolutions ranging from $32\times 32$ to
$2048\times 2048$}
\resizebox{0.7\textwidth}{!}{ 
    \footnotesize 
    \begin{tabular}{c |c c c c c c}
        \toprule 
       \multirow{2}{*}{Resolution}  & \multicolumn{6}{c}{MSIPRM}\\
        \cline{2-7}
       & \makecell{IPRM time\,(s)} & \makecell{Iter} & Updates of $\N$ &K--W & $\textup{KKT}_{\textup{res,out}}$ & \makecell{$|\N|$ }\\ [1.5ex] 
        \hline 
$32\times 32$   &   0.38 & 74& 3 &245.37& 4.4E-09    & 1.7E+04   \\
$64\times 64$   &   1.60 &104& 3 &243.67& 6.7E-07    & 6.7E+04   \\
$128\times128$  &   5.95 & 88& 2 &243.25& 2.1E-07    & 2.7E+05   \\
$256\times256$  &  15.45 & 57& 1 &243.16& 1.0E-06    & 1.1E+06   \\
$512\times512$  &  89.67 & 83& 1 &243.13& 1.0E-06    & 4.2E+06   \\
$1024\times1024$& 181.20 & 39& 1 &243.13 \protect\footnotemark{}& 7.6E-07    & 1.8E+07   \\
$2048\times2048$&1186.14 & 59& 1 &243.12& 7.5E-07    & 7.1E+07   \\
        \hline 
  & \multicolumn{6}{c}{Total IPRM time=1480.39 s;\quad \textup{Total time}=4701.13 s}  \\
        \bottomrule 
    \end{tabular}   
}
\label{tab:KW-scaling}
\end{table}
\footnotetext{The values in this table follow the scaling convention used
in our implementation. Under this convention, the K--W distance on the
$2048\times2048$ grid is approximately twice the value on the
$1024\times1024$ grid reported in Table~\ref{tab:KW-1024}.}

\begin{figure}
    \centering 
    \captionsetup{font=small}
    \begin{subfigure}[t]{0.49\textwidth} 
        \centering 
        \includegraphics[width=\linewidth, keepaspectratio]{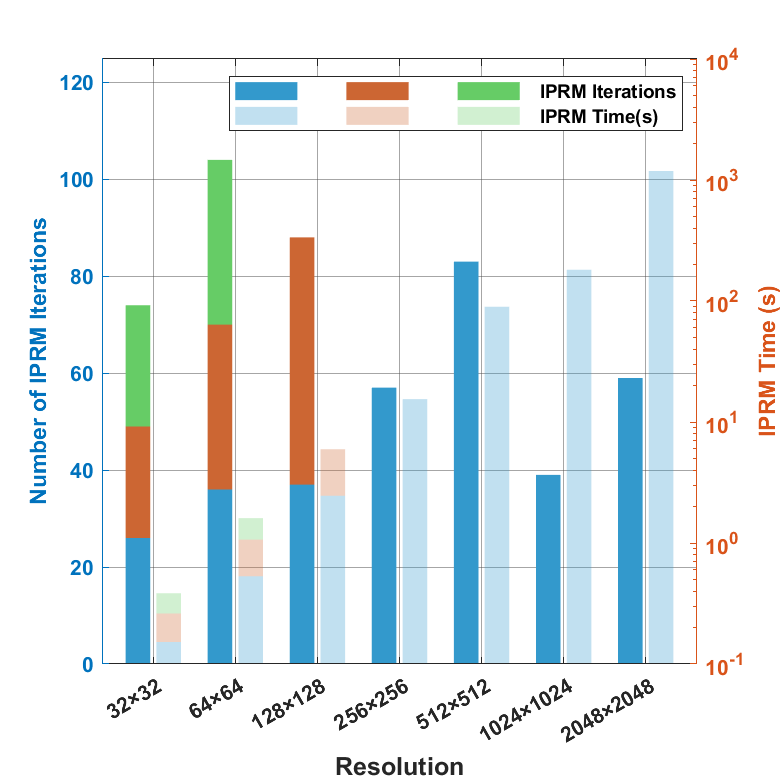} 
        \caption{\,Number of iterations and running time of IPRM on grids with
resolutions up to $2048\times 2048$}
        \label{figure5a}
    \end{subfigure} 
    \hfill 
    \begin{subfigure}[t]{0.49\textwidth}  
        \centering 
        \includegraphics[width=\linewidth, keepaspectratio]{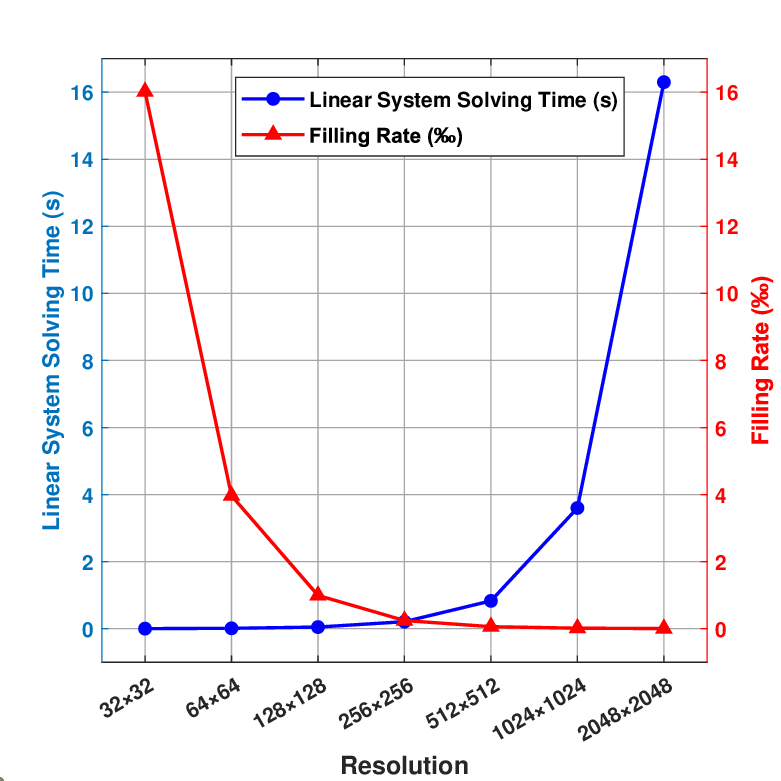} 
        \caption{\,Average time per step for solving the linear system and 
average filling rate of active set $\N$ on grids with resolutions up to $2048\times 2048$}
        \label{figure5b}
    \end{subfigure}  
    \caption{ Computational details of MSIPRM for solving an OT problem on a $2048\times 2048$ grid}   
\label{fig:scaling-details} 
\end{figure}

To study the scaling behavior in more detail, we next fix test measures 2 and 5 and compute the corresponding K--W distance on grids from $32\times 32$ up to $2048\times 2048$; see Table~\ref{tab:KW-scaling} and Figure~\ref{fig:scaling-details}. The computed K--W distance quickly stabilizes as the grid is refined, which indicates that the multiscale sequence gives a numerically consistent approximation of the large-scale OT problems.

Table~\ref{tab:KW-scaling} shows that the total number of IPRM iterations over all scales is $504$, but this number should be interpreted together with Subfigure~\ref{figure5a}. The left panel shows that a large part of the iterations is performed on coarse and medium grids, where each iteration is relatively cheap. In contrast, the finest levels require only a moderate number of iterations. Thus, the iteration count does not grow rapidly with grid resolution. This is an important feature of the method, because it means that the multiscale transfer from one level to the next is effective.

The active-set updates show a similar pattern. Table~\ref{tab:KW-scaling} shows that only two or three updates of $\N$ are required at the coarse levels, while at $256\times 256$ and above a single update is sufficient at each level. 
This strongly suggests that the support predicted from the previous level is already very accurate for the next level. In other words, our multiscale method provides not only a good initial active set but also an efficient update strategy at each level.

Subfigure~\ref{figure5b} provides further evidence of the scalability of MSIPRM. The average time per linear-system solve increases by approximately a factor of four when the grid resolution doubles.
The active-set size grows more slowly than the total number of transport variables, resulting in increasing relative sparsity. This is crucial to the ability of the method to handle very large OT instances.
For example, on a $2048\times2048$ grid, the original problem involves 
8.4E+06 marginal constraints and 
1.8E+13 transport variables. The formulation used by HOT contains approximately 1.3E+07 constraints and 1.7E+10 variables. 
Even after this reduction, the model remains prohibitively
large for our computational environment. In contrast, at the finest level MSIPRM solves a sparse problem with about 8.4E+06 constraints and only 7.1E+07 variables in our experiments. Together with the preceding benchmark results, this substantial reduction helps explain the favorable large-scale performance of MSIPRM relative to HOT and LEMON-NS. 

Taken together, Tables~\ref{tab:KW-1024}--\ref{tab:KW-scaling} and Figure~\ref{fig:scaling-details} provide a clear view of the large-scale behavior of MSIPRM. The method remains stable across different probability measures. As the grid is refined, the active set becomes increasingly sparse, and IPRM efficiently solves the resulting sparse subproblems.
Meanwhile, MSIPRM continues to accurately identify the support
of the optimal transport plan. This demonstrates that the support identification and active-set update strategies are well suited to the multiscale framework. These observations explain why the method can efficiently solve large-scale OT problems.

\section{Conclusion}\label{conclusion}
This paper proposes MSIPRM for solving large-scale OT problems. The method combines a multiscale outer framework with IPRM for sparse inner subproblems. By restricting the transport variables to a small active set and refining this set adaptively, MSIPRM avoids the prohibitive memory requirements of the full transport plan. We also analyze several structural properties of sparse OT problems, including row reduction, problem decomposition, constraint nondegeneracy, and support identification. To improve the efficiency of solving the inner subproblems, we employ a warm-start strategy and compute the Newton direction by solving a reduced Schur complement system.
In addition, we investigate the global and local convergence properties of IPRM in this setting. The numerical results show that MSIPRM is scalable across a broad range of test problems and remains competitive with existing solvers. In particular, it can handle problems on grids of size up to $2048\times 2048$. These results indicate that combining a multiscale framework with IPRM is a promising approach to solving large-scale OT problems.

	
{\small
\,

\noindent\textbf{Declarations}

\medskip

\noindent\textbf{Funding}   R.J.~Zhang was supported by the National Natural Science Foundation of China (No. 1250012017), the Natural Science Foundation of Tianjin, China (No. 24JCQNJC01970), and the Fundamental Research Funds for the Central Universities (Nos. 050-63253088 and 050-63263071). Y.H.~Dai was supported by the National Key  R\&D Program of China (Nos. 2021YFA1000300 and 2021YFA1000301) and the National Natural Science Foundation of China (No. 92473208).

\medskip

\noindent\textbf{Conflict of Interest} The authors have no relevant financial or non-financial interests to disclose.

\medskip

\noindent\textbf{Data Availability}  
Some of the experiments used the public DOTmark benchmark dataset. Data generated during this study will be made available upon reasonable request.}

\section*{Appendix: Proofs}
\refstepcounter{proofcnt} 
\noindent\textbf{\theproofcnt. Proof of Proposition \ref{rankA}:}\label{proof:theorem-1} By \cite[Theorem 8.2.1]{royle2001algebraic}, the rank of $\bar A_\N $ is determined by the connected components of $\G_\N $. For any $1\leq i\ne j\leq c_0 $, if $u_s\in\I_i$, $v_t\in\J_j$, then $(u_s,v_t)\notin\N$. Hence, 
there exist permutation matrices $P$ and $Q$ such that
\[
P\bar A_\N Q=
\begin{pmatrix}
\bar A_1 & 0 & \cdots & 0\\
0 & \bar A_2 & \cdots & 0\\
\vdots & \vdots & \ddots & \vdots\\
0 & 0 & \cdots & \bar A_{c_0}
\end{pmatrix},
\]
where $\bar A_k\in\mathbb{R}^{(|\I_k|+|\J_k|)\times|\N_k|}$
is the incidence matrix associated with the connected component
$\G_{\N_k}$. Substituting this block decomposition into \eqref{sparse} immediately yields the $c_0$ subproblems in \eqref{subproblem}.
\qed

\refstepcounter{proofcnt} 
\noindent\textbf{\theproofcnt. Proof of Theorem \ref{zeromeasure}:}\label{proof:theorem-0} 
It suffices to prove that in
$\mathcal H(m,n)$, $\D(m,n)$ has positive measure and
$\D(m,n)\setminus\E(m,n)$ has measure zero.

It is not hard to prove that $\mathcal H(m,n)=\{\bar Ax\,|\, x\in\mathbb{R}^{mn}\}$ and $\D(m,n)=\{\bar Ax\,|\, x\in\mathbb{R}^{mn}_{++}\}$. Thus, $\bar A$ can be regarded as a surjective linear map from
$\mathbb R^{mn}$ onto $\mathcal H(m,n)$. So $\bar A$ is an open map. Because $\mathbb{R}_{++}^{mn}$ is open, and $\bar A$ is
an open map, $\D(m,n)$ is a nonempty open subset of
$\mathcal H(m,n)$. It therefore has positive measure.
 
If $\bar b=(\bm\mu,\bm\nu)\in\D(m,n)\setminus\E(m,n)$, then there
exist nontrivial partitions
$\{\I_1,\I_2\}$ of $\U$ and
$\{\J_1,\J_2\}$ of $\V$ such that
\eqref{summu} holds. The
equalities in \eqref{summu} impose a nontrivial linear constraint
on $\mathcal H(m,n)$. Consequently, $\bar b$ lies in a proper linear subspace of $\mathcal H(m,n)$ of dimension $m+n-2$, which has measure
zero. Taking the union over all nontrivial partitions of $\U$ and
$\V$ yields a finite union of measure-zero sets containing
$\D(m,n)\setminus\E(m,n)$. Thus,
$\D(m,n)\setminus\E(m,n)$ has measure zero.\qed

\refstepcounter{proofcnt} 
\noindent\textbf{\theproofcnt. Proof of Lemma \ref{samespt}:}\label{proof:lemma-e} Let $\spt(x^*)=\PP$ and $\spt(s^*)=\Q$.
Since the primal and dual constraint nondegeneracy conditions hold at $\omega^*$, we have
\begin{equation*}
    \begin{aligned}
        \PP=\{j\mid s^*_j=0,\,1\leq j\leq |\N|\}\quad\textup{and}\quad \Q=\{j\mid x^*_j=0,\,1\leq j\leq |\N|\}.
    \end{aligned}
\end{equation*}
Moreover, $|\PP|=m+n-1$ and $|\Q|=|\N|-|\PP|$. Assume without loss of generality that $A_\N$ admits a partition $A_\N=[A_{\PP}\;A_{\Q}]$, where $ \,A_{\PP}\in\mathbb{R}^{|\PP|\times |\PP|}$ and $ A_{\Q}\in\mathbb{R}^{|\PP|\times |\Q|}$. Then $A_\PP$ is nonsingular. It follows that
$x^*_{\Q}=0,\,x^*_{\PP}>0,\, A_{\PP}x_{\PP}^*=b$, and $s^*_{\Q}>0,\,s^*_{\PP}=0,\, A_{\PP}^\top \lambda^*=c_{\PP},\,A^\top_{\Q}\lambda^*+s_{\Q}^*=c_{\Q}$.
Let the perturbation vector $\xi$ satisfy
\begin{equation}\label{boundofxi}
\|\xi\|<\frac{\min\big\{\min_{j\in \PP}x^*_j,\min_{j\in\Q}s^*_j/\rho\big\}}{\sqrt{1+\|A_{\PP}^{-1}A_{\Q}\|^2}}.
\end{equation}
We claim that the triple $(\hat x,\hat\lambda,\hat s)$ constructed
as follows is a KKT triple for the perturbed primal--dual pair
\eqf{perpp} and \eqf{perdp}:
\begin{equation}
\begin{aligned}
    \hat x_{\Q}&=0,\quad \hat x_{\PP}=x^*_{\PP}+\xi_\PP+A_{\PP}^{-1}A_{\Q}\xi_{\Q},\\
   \hat s_{\PP}&=0,\quad \hat s_{\Q}=s^*_{\Q}+\rho\xi_\Q-\rho A_{\Q}^\top A_{\PP}^{-\top}\xi_{\PP},\quad \hat\lambda=\lambda^*+\rho A_{\PP}^{-\top}\xi_{\PP}. 
   \end{aligned}
\end{equation}
First, we have
$A_\N\hat x=b+A_\N\xi$, $A_\N^\top\hat\lambda+\hat s=c_\N+\rho\xi$ and $\hat x\circ\hat s=0$.  
Moreover,
\begin{equation*}
    \begin{aligned}
&\hat x_{\PP}\geq x^*_{\PP}-(\|\xi_{\PP}\|+\|A_{\PP}^{-1}A_{\Q}\|\|\xi_{\Q}\|)\mathbf{1}_{|\PP|}\geq x^*_{\PP}-\sqrt{1+\|A_{\PP}^{-1}A_{\Q}\|^2}\|\xi\|\textbf{1}_{|\PP|}>0,\\
&\hat s_{\Q}\geq s^*_{\Q}-\rho(\|A_{\Q}^{\top}A_{\PP}^{-\top}\|\|\xi_{\PP}\|+\|\xi_{\Q}\|)\textbf{1}_{|\Q|}\geq s^*_{\Q}-\rho\sqrt{1+\|A_{\PP}^{-1}A_{\Q}\|^2}\|\xi\|\textbf{1}_{|\Q|}>0.
    \end{aligned}
\end{equation*}
Thus, $(\hat x,\hat\lambda,\hat s)$ is a KKT triple for the
perturbed primal--dual pair \eqf{perpp} and \eqf{perdp}, and $\spt(\hat x)=\spt(x^*),\,\spt(\hat s)=\spt(s^*)$. It remains to prove that
\[\|A_{\PP}^{-1}A_{\Q}\|\leq 2\sqrt{(m+n-1)(|\N|-(m+n-1))}.\]
Since $A_\PP$ is a nonsingular submatrix of the totally
unimodular matrix $A_\N$, it is totally unimodular, and $(A_{\PP}^{-1})_{ij}\in\{0,\pm1\}$. Each column of $A_\Q$ contains at most two nonzero entries, both equal to $1$.
Then
$(A_{\PP}^{-1}A_{\Q})_{ij}\in\{0,\pm1,\pm2\}$. Hence
\[\|A_{\PP}^{-1}A_{\Q}\|\leq\|A_{\PP}^{-1}A_{\Q}\|_F\leq 2\sqrt{(m+n-1)(|\N|-(m+n-1))}.\]
This completes the proof. \qed

\refstepcounter{proofcnt} 
\noindent\textbf{\theproofcnt. Proof of Lemma \ref{boundx}:}\label{proof:lemma-c}
(i) By the update rules for $\mu^{(k)}$ and $\rho^{(k)}$ in Algorithm~\ref{IPRM}, $\{\mu^{(k)}\}$ is nonincreasing and $\{\rho^{(k)}\}$ is nondecreasing. For $k\in\mathbb{Z}_+$ and $l\in\mathbb{Z}_+$, define $\mu^{(k)}_l:=\mu^{(k)}/\eta^l$. With $x$, $s$, and $\rho$ fixed, we write
$\hat z(\mu):=z(x,s;\mu,\rho)$. We consider the following three cases.

First, suppose that there exists $k_0\in\mathbb{Z}_+$ such that
$\mu^{(k_0)}\ge\eta\phi^{(k_0)}$ and the inner loop in Step~5 of
Algorithm~\ref{IPRM} does not terminate. Since $\eta>1$, we have
\begin{equation}\label{phimu=0}
    \lim_{l\to\infty}\mu^{(k_0)}_l=0\quad \textup{and}\quad\lim_{l\to\infty}\phi_{(\mu^{(k_0)}_l,\rho^{(k_0)})}(\omega^{(k_0)})=0.
\end{equation}

Second, by the definition of $\gamma$, we have
\begin{equation*}\label{Deltamu}  \Delta\mu^{(k)}=\begin{cases}
\quad 0, & \text{if } \frac{\mu^{(k)}}{\phi^{(k)}}\leq \gamma_0, \\
-\mu^{(k)}+\gamma_0\phi^{(k)},  &\text{if } \frac{\mu^{(k)}}{\phi^{(k)}}>\gamma_0.
\end{cases}
\end{equation*} 
The inequality \eqref{partial2rho} states that $\phi_{(\mu^{(k+1)},\rho)}(\omega^{(k+1)})$ is nonincreasing with respect to $\rho$.
If $\mu^{(k)}\geq\gamma_0\phi^{(k)}$, then after Step~3 of
Algorithm~\ref{IPRM}, inequality~\eqref{ineq} implies that
\begin{equation}\label{muk+1}
    \frac{\mu^{(k+1)}}{\phi^{(k+1)}}\geq\frac{(1-\alpha^{(k)})\mu^{(k)}+\alpha^{(k)}\gamma_0\phi^{(k)}}{(1-2\tau\alpha^{(k)})\phi^{(k)}}\geq\frac{(1-\alpha^{(k)})\gamma_0}{1-2\tau\alpha^{(k)}}+\frac{\alpha^{(k)}\gamma_0}{1-2\tau\alpha^{(k)}}> \gamma_0.
\end{equation}

Third, suppose that the inner loop in Step~5 of
Algorithm~\ref{IPRM}, executed during the $k$-th iteration, terminates
after finitely many steps. Then, for some integer $l\geq 1$, the termination conditions yield
\begin{equation}\label{mul-1}
    \begin{aligned}
        \mu^{(k+1)}_{l-1}&\ge\eta\phi_{(\mu_{l-1}^{(k+1)},\rho^{(k+1)})}(\omega^{(k+1)})=\frac{\eta}{2}\|\hat z(\mu_{l-1}^{(k+1)})-x^{(k+1)}\|^2,\\
 \mu^{(k+1)}_{l}&<\eta\phi_{(\mu_{l}^{(k+1)},\rho^{(k+1)})}(\omega^{(k+1)})=\frac{\eta}{2}\|\hat z(\mu_{l}^{(k+1)})-x^{(k+1)}\|^2.
    \end{aligned}
\end{equation}
Step~5 updates $\mu^{(k+1)}\gets\mu_{l}^{(k+1)}$ and $\phi^{(k+1)}\gets \phi_{(\mu_{l}^{(k+1)},\rho^{(k+1)})}(\omega^{(k+1)})$. From \eqref{zmu1mu2} and \eqref{mul-1}, we obtain
\begin{equation}\label{upperphi2}
\begin{split}
      \phi^{(k+1)}&\leq \frac{1}{2}(\|\hat z(\mu_{l}^{(k+1)})-\hat z(\mu_{l-1}^{(k+1)})\|+\|\hat z(\mu_{l-1}^{(k+1)})-x^{(k+1)}\|)^2\\
      &\leq \frac{1}{2}\left(\sqrt\frac{\cN}{\rho^{(k+1)}}(\sqrt{\mu_{l-1}^{(k+1)}}-\sqrt{\mu_{l}^{(k+1)}})+\sqrt\frac{2\mu_{l-1}^{(k+1)}}{\eta}\right)^2\\
      &\leq\frac{1}{2}\left(\sqrt{\frac{\cN}{\rho^{(0)}}}(\sqrt\eta-1)+\sqrt2\right)^2\mu^{(k+1)}\\
      &< \mu^{(k+1)}/\gamma_0,
\end{split}
\end{equation}
where the last inequality follows from the parameter choices in Algorithm~\ref{IPRM}: 
\[0<\theta_0,\,0<\gamma_0<\frac{1}{(1+\theta_0)^2},\, \textup{and}\,1<\eta<\left(1+\theta_0\sqrt{{2\rho^{(0)}}/{{\cN}}}\right)^2.\]
Therefore, after Step~5, one has $\gamma_0\phi^{(k+1)}<\mu^{(k+1)}<\eta\phi^{(k+1)}$.

Equation \eqref{phimu=0} indicates that $\omega^{(k_0)}$ is a KKT triple for the primal--dual pair \eqf{pp} and \eqf{dp}. By the update rules for $\mu^{(k)}$ and $\phi^{(k)}$, once either $\Delta\mu^{(k)}<0$ or the inner loop in Step~5 is entered, inequalities~\eqref{upperphi2} and
\eqref{muk+1} ensure that
$\gamma_0\phi^{(k)}< \mu^{(k)}<\eta\phi^{(k)}$ 
holds for all subsequent iterates. Thus, $\{\phi^{(k)}\}$ is bounded. If neither event ever occurs, then $\mu^{(k)}$ remains constant. It then follows
from \eqref{ineq} that $\{\phi^{(k)}\}$ is decreasing.
Therefore, the sequence $\{\phi^{(k)}\}$ is bounded in either case.

(ii) By part (i), the sequence $\{\|\xi^{(k)}\|\}$ is bounded.   
Suppose, to the contrary, that $\{z^{(k)}\}$ is unbounded. Without loss of generality, we assume that $\|z^{(k)}\|\to\infty$. Then,
$A_\N\frac{z^{(k)}}{\|z^{(k)}\|}=\frac{b+A_\N\xi^{(k)}}{\| z^{(k)}\|}\to 0$.
By passing to a convergent subsequence, we obtain
$\lim_{j\to\infty}\frac{z^{(k_j)}}{\|z^{(k_j)}\|}=\hat z$
for some $\hat z\geq 0$ satisfying $\|\hat z\|=1$ and
$A_\N\hat z=0$. This yields a contradiction, because $\hat z$ must be the zero solution.
Therefore, $\{z^{(k)}\}$ is bounded. Since $\|x^{(k)}\|\leq\|z^{(k)}\|+\|\xi^{(k)}\|$, $\{x^{(k)}\}$ is bounded as well. \qed

\refstepcounter{proofcnt} 
\noindent\textbf{\theproofcnt. Proof of Theorem \ref{uppermuk}:}\label{proof:theorem-d}
       Let $(x^*_{\xi^{(k)}},\lambda^*_{\xi^{(k)}},s^*_{\xi^{(k)}})$ be a KKT triple for the perturbed primal--dual pair \eqf{perpp} and \eqf{perdp} with $\rho=\rho^{(k)}$ and $\xi=\xi^{(k)}$.
Then, $(z^{(k)},\lambda^{(k)},\rho^{(k)}y^{(k)})$ is on the 	
central path of the perturbed pair and satisfies
$z^{(k)}_jy^{(k)}_j=\mu^{(k)}/\rho^{(k)}$. Since
\[(x^*_{\xi^{(k)}}-z^{(k)})^\top(s^*_{\xi^{(k)}}-\rho^{(k)}y^{(k)})=0\quad \textup{and}\quad x^*_{\xi^{(k)}}\circ s^*_{\xi^{(k)}}=0,\]
expanding the first equality and using the second one yields
\begin{equation*}
    \begin{aligned}
      \sum_{j\in\spt(s^*_{\xi^{(k)}})}{(s^*_{\xi^{(k)}})_j}{ z^{(k)}_j}+\sum_{j\in\spt(x^*_{\xi^{(k)}})}\rho^{(k)}{ (x^*_{\xi^{(k)}})_j}{y^{(k)}_j}&=\mu^{(k)}|\N|,\\
      \sum_{j\in\spt(s^*_{\xi^{(k)}})}\frac{(s^*_{\xi^{(k)}})_j}{\rho^{(k)} y^{(k)}_j}+\sum_{j\in\spt(x^*_{\xi^{(k)}})}\frac{(x^*_{\xi^{(k)}})_j}{z^{(k)}_j}&=|\N|.  
    \end{aligned}
\end{equation*}
By Lemma \ref{samespt}, if $\xi^{(k)}$ satisfies inequality \eqref{boundofxi}, one has
\begin{equation}\label{deltaP}
\begin{aligned}
\delta^{(k)}_P&:=\min\limits_{j\in\spt(x^*)}(x^*_{\xi^{(k)}})_j\geq\min\limits_{j\in\spt(x^*)}x^*_j-\sqrt{1+\|A_{\PP}^{-1}A_{\Q}\|^2}\|\xi^{(k)}\|>0,\\
\delta^{(k)}_D&:=\min\limits_{j\in\spt(s^*)}(s^*_{\xi^{(k)}})_j\geq\min\limits_{j\in\spt(s^*)}s^*_j-\rho^{(k)}\sqrt{1+\|A_{\mathcal{Q}}^{\top}A_{\PP}^{-\top}\|^2}\|\xi^{(k)}\|>0.       
\end{aligned}
\end{equation}
Furthermore, their supports coincide with those of the original KKT
triple; namely,
$\spt(x^*_{\xi^{(k)}})=\spt(x^*)$ and $\spt(s^*_{\xi^{(k)}})=\spt(s^*)$.
Following the argument of
\cite[Theorem~2.1]{andersen1996combining}, we require that
\begin{equation}\label{requiremu}
  \sqrt{\frac{\mu^{(k)}}{\rho^{(k)}}}\leq\frac{\delta_P^{(k)}}{|\N|}\quad \textup{and}\quad \sqrt{\frac{\mu^{(k)}}{\rho^{(k)}}}\leq\frac{\delta_D^{(k)}/\rho^{(k)}}{|\N|}.
\end{equation}
Note that $\|\xi^{(k)}\|\leq \sqrt{2\mu^{(k)}/\gamma_0}$ for all $k\ge k_1$. If $\mu^{(k)}$ satisfies
\[\sqrt{{\mu^{(k)}}}\leq\frac{\min\{\min_{j\in \spt(x^*)}x^*_j,\min_{j\in\spt(s^*)}s^*_j/\rho^{(k)}\}}{|\N|/\sqrt{\rho^{(k)}}+\sqrt{2(1+4(m+n-1)(|\N|-(m+n-1)))/\gamma_0}},\]
then the condition of Lemma~\ref{samespt}, the estimates
in \eqref{deltaP} and the inequalities in \eqref{requiremu} are
satisfied. Thus, the supports of $x^*$ and $s^*$ are given
by the following partition:
\begin{equation*}
\begin{aligned}
&\spt(x^*)=\spt(x^*_{\xi^{(k)}})=\left\{j\mid z^{(k)}_j\geq y^{(k)}_j\right\}=\Big\{j\,\Big{|}\, z_j^{(k)}
\ge \sqrt{\mu^{(k)}/\rho^{(k)}}\Big\},\\
&\spt(s^*)=\spt(s^*_{\xi^{(k)}})=\left\{j\mid z^{(k)}_j< y^{(k)}_j\right\}=\Big\{j\,\Big{|}\, z_j^{(k)}
< \sqrt{\mu^{(k)}/\rho^{(k)}}\Big\}.
\end{aligned}
\end{equation*}
This completes the proof. \qed


\refstepcounter{proofcnt} 
\noindent\textbf{\theproofcnt. Proof of Theorem \ref{Hoffman}:}\label{proof:theorem-6} 
(i) The conclusion follows in part~(i) directly from~\eqref{phimu=0}.

(ii) By Lemma~\ref{boundx}, the sequence $\{x^{(k)}\}$ is bounded. Moreover, the update rule for $\rho^{(k)}$, together with the boundedness of $\{\rho^{(k)}\}$, implies that $\{s^{(k)}\}$ is bounded. It then follows from
$\lambda^{(k)}=(A_\N A_\N^\top)^{-1}A_\N(c_\N-s^{(k)})$ that $\{\lambda^{(k)}\}$ is also bounded. 

We first prove that $\lim_{k\to \infty}\mu^{(k)}=0$. If Step~5 of Algorithm~\ref{IPRM} is invoked infinitely often,
the update rule for $\mu^{(k)}$ directly implies that $\mu^{(k)}\to0$.
Otherwise, Step~5 is invoked only finitely many times and hence is not invoked for all sufficiently large $k$. Then $\{\phi^{(k)}\}$ is eventually decreasing.
An argument analogous to the proof of \cite[Theorem 5.2]{zhang2024iprsdp} shows that $\mu^{(k)}\to 0$ in this case. Thus, $\{\mu^{(k)}\}$ cannot remain unchanged and there exists $k_1\in\mathbb Z_+$ such that $\gamma_0\phi^{(k)}<\mu^{(k)}<\eta\phi^{(k)}$ for all $k\geq k_1$. Then $\lim_{k\to\infty}\phi^{(k)}=0$ as well. By the boundedness of $\{\omega^{(k)}\}$, every cluster point of $\{\omega^{(k)}\}$ is a KKT triple for~\eqf{pp} and~\eqf{dp}.

We now establish the error bound~\eqref{Errobound}.
Let $\tilde z^{(k)}=z(x^{(k)},s^{(k)};0,\rho^{(k)})$ and $\tilde \xi^{(k)}=\tilde z^{(k)}-x^{(k)}$. Setting $\hat\mu_1=\mu^{(k)}$ and $\hat\mu_2=0$ in~\eqref{zmu1mu2} and using that $\rho^{(k)}\ge\rho^{(0)}$, we obtain, for all $k\geq k_1$,
\begin{equation}\label{ineqxi}
\begin{aligned}
    \|\tilde \xi^{(k)}\|&\leq \|\tilde z^{(k)}-z^{(k)}\|+\|z^{(k)}-x^{(k)}\|\leq\left(\sqrt{\frac{\cN}{\rho^{(0)}}}+\sqrt{\frac{2}{\gamma_0}}\right)\sqrt{\mu^{(k)}}.
\end{aligned}
		\end{equation}
        
By Lemma \ref{central}, we conclude that
$(\tilde z^{(k)},\lambda^{(k)}, s^{(k)}+\rho^{(k)}\tilde\xi^{(k)})$
is a KKT triple for the perturbed primal--dual pair
\eqf{perpp} and
\eqf{perdp} with $\rho=\rho^{(k)}$ and $\xi=\tilde\xi^{(k)}$, i.e.,
$
    (\tilde z^{(k)},\lambda^{(k)})\in 
       \mathcal{S}(A_\N,b+A_\N\tilde\xi^{(k)},c_\N+\rho^{(k)}\tilde\xi^{(k)})$.
By the Hoffman error bound \cite[Theorem~2]{robinson1973bounds}, there exists $C_H>0$, depending only on
$A_\N$, $b$, and $c_\N$, such that for all $k\ge k_1$, one can find $(\bar x^{(k)},\bar \lambda^{(k)})\in\mathcal{S}(A_\N,b,c_\N)$ satisfying
\begin{equation*}
\begin{aligned}
    \left\Arrowvert\begin{pmatrix}
\tilde z^{(k)}\\			
\lambda^{(k)} 
\end{pmatrix}-\begin{pmatrix}
\bar x^{(k)}\\			
\bar \lambda^{(k)} 
\end{pmatrix}\right\Arrowvert&\leq C_H  \left\Arrowvert\left(\begin{array}{ccc}
			 \rho^{(k)}\tilde\xi^{(k)}_+\\
		    A_\N\tilde\xi^{(k)} \\
		   (\rho^{(k)}\tilde\xi^{(k)})^\top\tilde z^{(k)}-(A_\N\tilde\xi^{(k)})^\top\lambda^{(k)}
		   \end{array}\right)\right\Arrowvert\\
		   &\leq C_H\sqrt{(\rho^{(k)})^2+\|A_\N\|^2+\|\rho^{(k)}\tilde z^{(k)}-A_\N^\top\lambda^{(k)}\|^2}\|\tilde\xi^{(k)}\|.
		   \end{aligned}
\end{equation*}
Define $C':=C_H\cdot\sup\limits_{k\geq0}\sqrt{(\rho^{(k)})^2+\|A_\N\|^2+\|\rho^{(k)}\tilde z^{(k)}-A_\N^\top\lambda^{(k)}\|^2}$. The boundedness of the relevant sequences ensures that $C'<\infty$.
Consequently, \eqref{ineqxi} yields
\begin{equation*}
\begin{aligned}
  \dist((x^{(k)},\lambda^{(k)}), \mathcal{S}(A_\N,b,c_\N))&\leq\left\Arrowvert\begin{pmatrix}
\tilde z^{(k)}\\			
\lambda^{(k)} 
\end{pmatrix}-\begin{pmatrix}
\bar x^{(k)}\\			
\bar \lambda^{(k)} 
\end{pmatrix}\right\Arrowvert+\|\tilde z^{(k)}-x^{(k)}\|\\
&\leq(C'+1)\|\tilde \xi^{(k)}\|\\
&\leq \left(\sqrt\frac{\cN}{\rho^{(0)}}+\sqrt{\frac{2}{\gamma_0}}\right)(C'+1)\sqrt{\mu^{(k)}}.
\end{aligned}
\end{equation*}
   Let $\widetilde C=\left(\sqrt\frac{\cN}{\rho^{(0)}}+\sqrt{\frac{2}{\gamma_0}}\right)(C'+1)$ and the proof is complete. 
   \qed
	
\refstepcounter{proofcnt} 
\noindent\textbf{\theproofcnt. Proof of Theorem \ref{quadratic}:}\label{proof:theorem-g} 
The primal and dual constraint nondegeneracy conditions imply that $x^*+s^*>0$. Then $\xi$ admits the following second-order expansion at
$(0,x^*,s^*)$: for every sufficiently small $(d_\mu,d_x,d_s)\in\mathbb{R}_+\times \mathbb{R}^{\cN}\times\mathbb{R}^{\cN}$,
\begin{equation}\label{xirho}
    \begin{aligned}
         &\quad\|\xi(x^*+d_x,s^*+d_s;d_\mu,\rho)-\xi(x^*,s^*;0,\rho)-V(d_\mu,d_x,d_s)\|\\
         &=O(\|(d_\mu,d_x,d_s)\|^2),
    \end{aligned}
\end{equation}
where $V(d_\mu,d_x,d_s)=\frac{d_\mu}{\rho}(Z+Y)^{-1}\mathbf1_{\cN}
-(Z+Y)^{-1}Yd_x
-\frac{1}{\rho} (Z+Y)^{-1}Zd_s$. Since $\xi^*:=\xi(x^*,s^*;0,\rho)=0$, we apply~\eqref{xirho} with $(d_\mu^{(k)},d_x^{(k)},d_s^{(k)})=(\mu^{(k)},x^{(k)},s^{(k)})-(0,x^*,s^*)$. Using $\omega^{(k)}\in\F$ for all $k\ge 0$, we have
\begin{equation}\label{ineq1}
  \|\psi^{(k)}\|=\|\xi^{(k)}\|=\|\xi^{(k)}-\xi^*\|=O(\|(\mu^{(k)},\omega^{(k)})-(0,\omega^*)\|).
\end{equation}
Recall that $\mu^{(k)}<\eta\phi^{(k)}$ and $\gamma=\min\{\gamma_0,\mu^{(k)}/\phi^{(k)}\}$. It follows from \eqref{ineq1} that 
\begin{equation}\label{esti}
|\Delta\mu^{(k)}|\leq \mu^{(k)}<\eta\phi^{(k)}=\frac{\eta}{2}\|\xi^{(k)}\|^2=O(\|(\mu^{(k)},\omega^{(k)})-(0,\omega^*)\|^2).
\end{equation}
Since $\mu^{(k)}\to 0$, the estimate~\eqref{esti} implies that
\begin{equation}\label{absmu}
 \mu^{(k)}=O(\|(\omega^{(k)}-\omega^*)\|^2)\quad\text{and}\quad |\Delta\mu^{(k)}|=O(\|(\omega^{(k)}-\omega^*)\|^2).
\end{equation}
Let $\Theta^{(k)}:=\psi^{(k)}+H^{(k)}(\omega^{(k)}-\omega^*)$. Using the definition of $H_{(\mu,\rho)}(\omega)$ in~\eqref{linearsystem},
together with~\eqref{ineq1} and~\eqref{absmu}, we obtain
\begin{equation}\label{ineq2}
\begin{aligned}
\|\Theta^{(k)}\|
&=\left\|\xi^{(k)}-V^{(k)}(\mu^{(k)},x^{(k)}-x^*,s^{(k)}-s^*)+\frac{\mu^{(k)}}{\rho^{(k)}}(Z^{(k)}+Y^{(k)})^{-1}\bm 1_{\cN}\right\|\\
&=O(\|(\mu^{(k)},x^{(k)}-x^*,s^{(k)}-s^*)\|^2)+O(\mu^{(k)})\\
&=O(\|\omega^{(k)}-\omega^*\|^2).
\end{aligned}
\end{equation}
If $\mu>0$, the matrix $H_{(\mu,\rho)}(\omega)$ is nonsingular. Lemma \ref{nonsingular} tells us that $H^*$ is nonsingular. For any two nonsingular matrices $H_1$ and $H$, it holds that
\begin{equation}\label{h1-h}
\|H_1^{-1}-H^{-1}\|\leq\frac{\|H^{-1}\|^2\|H_1-H\|}{1-\|H^{-1}\|\|H_1-H\|},\quad \textup{if }\|H_1-H\|<\frac{1}{\|H^{-1}\|}.
\end{equation}
Therefore, for any fixed constant $0<\delta<1$, 
one can find a sufficiently small $\epsilon>0$ such that $\|H_{(\mu,\rho)}(\omega)-H^*\|<\frac{\delta}{\|(H^*)^{-1}\|}$ for all $(\mu,\omega)\in U_\epsilon:=\{(\mu,\omega)\mid\|(\mu,\omega)-(0,\omega^*)\|<\epsilon,\,\mu>0\}$.
It then follows from~\eqref{h1-h} that
\begin{equation}\label{hmurho}
\begin{aligned}
    \|(H_{(\mu,\rho)}(\omega))^{-1}\|
    &\leq \frac{\|(H^*)^{-1}\|}{1-\delta},
    \end{aligned}
\end{equation}
which shows that $\|(H_{(\mu,\rho)}(\omega))^{-1}\|$ is uniformly bounded for $(\mu,\omega)\in U_\epsilon$. 
Because $(0,\omega^{*})$ is a cluster point, there exists a sufficiently large $k\in\mathbb{Z}_+$ such that $(\mu^{(k)},\omega^{(k)})\in U_\epsilon$ and 
all the preceding local estimates hold at $k$.
Then, 
\begin{equation*}\begin{aligned}
\|\omega^{(k)}-\omega^*\|&=\|(H^{(k)})^{-1}(\Theta^{(k)}-\psi^{(k)})\|= O(\|\psi^{(k)}\|)+O(\|\omega^{(k)}-\omega^*\|^2).
\end{aligned}
\end{equation*}
Hence, $\|\omega^{(k)}-\omega^*\|=O(\|\psi^{(k)}\|)=O(\|\xi^{(k)}\|)$.

Using the Newton system~\eqref{linearsystem} together
with~\eqref{ineq1} and~\eqref{ineq2}, we obtain
\begin{equation}\label{muomega}
\begin{aligned}
   &\quad\|(\mu^{(k)},\omega^{(k)})+(\Delta\mu^{(k)},\Delta\omega^{(k)})-(0,\omega^*)\|
    \\
    &\leq\gamma\phi^{(k)}+\|\omega^{(k)}+\Delta\omega^{(k)}-\omega^*\|\\
     &\leq\gamma_0\phi^{(k)}+\|(H^{(k)})^{-1}(H^{(k)}(\omega^{(k)}-\omega^*)+H^{(k)}\Delta\omega^{(k)})\| \\
     &\leq O(\|\xi^{(k)}\|^2)+O(\|\Theta^{(k)}\|)+O\left(\left\|\frac{\Delta\mu^{(k)}}{\rho^{(k)}}(Z^{(k)}+Y^{(k)})^{-1}\mathbf{1}_{\cN}
			\right\|\right)\\
            &= O(\|\xi^{(k)}\|^2)+O(\|\Theta^{(k)}\|)+O(|\Delta\mu^{(k)}|)\\
		&= O(\|(\mu^{(k)},\omega^{(k)})-(0,\omega^*)\|^2).
\end{aligned}
\end{equation}
In particular, by \eqref{absmu} and~\eqref{muomega},
$\|\omega^{(k)}+\Delta\omega^{(k)}-\omega^*\|=O(\|\omega^{(k)}-\omega^*\|^2)$. By the local Lipschitz continuity of the residual mapping,
\begin{equation*}
\begin{aligned}
\phi_{(\mu^{(k)}+\Delta\mu^{(k)},\rho^{(k)})}(\omega^{(k)}+\Delta\omega^{(k)})&=O(\|(\mu^{(k)},\omega^{(k)})+(\Delta\mu^{(k)},\Delta\omega^{(k)})-(0,\omega^*)\|^2)\\
      &=O(\|(\mu^{(k)},\omega^{(k)})-(0,\omega^*)\|^4)\\
     &=O((\phi_{(\mu^{(k)},\rho^{(k)})}(\omega^{(k)}))^2).
     \end{aligned}
\end{equation*}
This implies that the full step is accepted by the line search; that is, in Step~4 of Algorithm~\ref{IPRM}, we have
\begin{equation}\label{k+1}
    (\mu^{(k+1)},\omega^{(k+1)})=(\mu^{(k)},\omega^{(k)})+(\Delta\mu^{(k)},\Delta\omega^{(k)}).
\end{equation}
Step~5 of Algorithm~\ref{IPRM} leaves $\omega^{(k+1)}$ unchanged and may only reduce $\mu^{(k+1)}$. Thus, regardless of whether the inner loop in
Step~5 performs any updates, estimate~\eqref{muomega} and the update relation show that $(\mu^{(k+1)},\omega^{(k+1)})\in U_\epsilon$,
provided that $\epsilon$ is chosen sufficiently small.
It follows by induction that all subsequent iterates remain in $U_\epsilon$ and satisfy the preceding estimates. This completes the proof.\qed

\bibliographystyle{spmpsci}
\bibliography{references}


%
%




\end{document}